\newcommand \id{\mathbbm 1}
\newtheorem{theorem}{Theorem}[section]
\newtheorem{lemma}[theorem]{Lemma}
\newtheorem{proposition}[theorem]{Proposition}
\newtheorem{corollary}[theorem]{Corollary}
\newtheorem{assumption}[theorem]{Assumption}
\newtheorem{definition}[theorem]{Definition}
\newtheorem{example}[theorem]{Example}
\theoremstyle{remark}
\numberwithin{equation}{section}
\newcommand {\R} {\mathbb{R}}
\renewcommand {\P} {\mathbb{P}}
\newcommand {\C} {\mathbb{C}}
\newcommand {\Sc} {\mathcal{S}}
\newcommand {\RP} {\mathbb{RP}}
\newcommand {\Hc} {\mathcal{H}}
\newcommand {\E} {\mathbb{E}}
\newcommand {\Fc} {\mathcal{F}}
\newcommand {\Xb} {\mathbb{X}}
\newcommand {\Xc} {\mathcal{X}}
\newcommand {\Pc} {\mathcal{P}}
\newcommand {\Tc} {\mathcal{T}}
\newcommand {\Gc} {\mathcal{G}}
\newcommand {\Dc} {\mathcal{D}}
\newcommand {\Cc} {\mathcal{C}}
\newcommand {\Pb} {\mathbb{P}}
\newcommand {\Lc} {\mathcal{L}}
\def\brb#1{\left[#1\right]}
\begin{document}

\title[RSW estimates for random polynomials]{Russo-Seymour-Welsh estimates for the \\ Kostlan ensemble of random polynomials}

\author{D. Beliaev}
\address{Mathematical Institute, University of Oxford}
\email{belyaev@maths.ox.ac.uk}
\author{S. Muirhead}
\address{Mathematical Institute, University of Oxford (currently at King's College London)}
\email{stephen.muirhead@kcl.ac.uk}
\author{I. Wigman}
\address{Department of Mathematics, King's College London}
\email{igor.wigman@kcl.ac.uk}

\subjclass[2010]{60G15, 60K35, 30C15}
\keywords{Kostlan ensemble, Gaussian field, nodal set, percolation, Russo-Seymour-Welsh estimates}

\date{\today}

\begin{abstract} We study the percolation properties of the nodal structures of random fields.
Lower bounds on crossing probabilities (RSW-type estimates) of quads by nodal domains or nodal sets 
of Gaussian ensembles of smooth random functions are established under the following assumptions: (i) sufficient symmetry; (ii) smoothness and non-degeneracy; (iii) local convergence of the covariance kernels; (iv) asymptotically non-negative correlations; and (v) uniform rapid decay of correlations.

The Kostlan ensemble is an important model of Gaussian homogeneous random polynomials. An application of our theory to the Kostlan ensemble yields
RSW-type estimates that are uniform with respect to the degree of the polynomials and quads of controlled geometry, valid on all relevant scales.
This extends the recent results on the local scaling limit of the Kostlan ensemble, due to Beffara and Gayet.

\end{abstract}
\maketitle

\section{Introduction}

\subsection{The Kostlan ensemble}
\label{s:kost}

The Kostlan ensemble of homogeneous degree-$n$ polynomials in $m+1\ge 2$ variables is the Gaussian random field $f_{n}: \R^{m+1}\rightarrow\R$  defined as
\begin{equation}
\label{eq:fn Kostlan def}
f_{n}(x)=f_{n;m}(x) = \sum\limits_{|J|=n}\sqrt{\binom{n}{J}}a_{J}x^{J},
\end{equation}
where  $J=(j_{0},\ldots,j_{m})$ is the multi-index, $|J|=j_{0}+\ldots+j_{m}$, $\binom{n}{J} = \frac{n!}{j_{0}!\cdot \ldots\cdot j_{m}!}$, and $\{a_{J}\}$ are i.i.d.\ standard Gaussian random variables. Since $f_n$ is homogeneous, it is also natural to view the Kostlan ensemble as the Gaussian random field on the unit $m$-dimensional sphere~$\mathbb{S}^m$ that is the restriction of~\eqref{eq:fn Kostlan def} to $\mathbb{S}^m$. The natural extension of \eqref{eq:fn Kostlan def} to $\C^{m+1}$ is known as the `complex Fubini-Study' ensemble.

\begin{figure}[t]
\label{fig:Kostlan}
\centering
\includegraphics[scale=0.75]{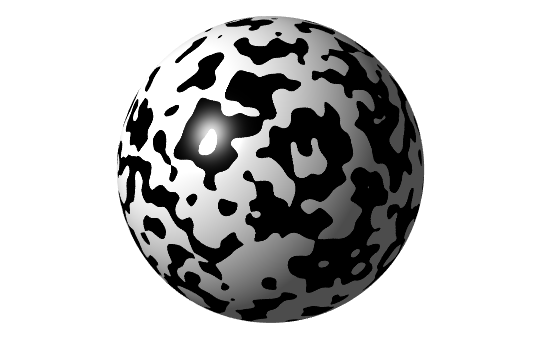}
\caption{A sample of the $m=2$-dimensional Kostlan ensemble of degree~$300$, with black (resp.\ white) representing the positive (resp.\ negative) nodal domains.}
\end{figure}

In this paper we are interested in various geometric properties of the {\em nodal set} of the Kostlan ensemble, i.e.\ the zero set of~$f_{n}$, particularly when the degree $n$ is large. Figure~\ref{fig:Kostlan} depicts the nodal domains of a sample of the $m=2$-dimensional Kostlan ensemble of degree~$300$ on~$\mathbb{S}^2$. Since $f_{n}$ is either even or odd depending on $n$, its nodal set can be naturally considered as a degree-$n$ \textit{hypersurface} (i.e.\ algebraic variety of co-dimension one) on the projective space~$\RP^m$. As we explain below, the Kostlan ensemble is a natural model for a `typical' homogeneous polynomial, and hence one may think of its nodal set as a `typical' real projective hypersurface.
\vspace{0.1cm}

The Kostlan ensemble can be equivalently defined as the canonical Gaussian element in the Hilbert space $\Hc_{n}$ of homogeneous degree-$n$ polynomials in $m+1$ variables spanned by the collection
\[ \left\{ \sqrt{\binom{n}{J}} x^{J}  \right\}_{|J|=n} \]
 as its orthonormal basis. Restricted to $\Hc_{n}$, the associated scalar product is,
up to the constant $\sqrt{n!}$, equal to the scalar product in the Bargmann-Fock space \cite{Bargmann61},
i.e.\ the space of all analytic functions on~$\C^{m+1}$ such that
\[
\|f\|_{BF}^2=\frac{1}{\pi^{m+1}}\int_{\C^{m+1}}|f(z)|^2e^{-\|z\|^2}d z<\infty
\]
with the scalar product
\begin{equation} \label{e:bfs}
\langle f,g \rangle_{BF}=\frac{1}{\pi^{m+1}}\int_{\C^{m+1}}f(z)\bar{g}(z)e^{-\|z\|^2}d z,
\end{equation}
playing an important role in quantum mechanics.
The restriction of the scalar product \eqref{e:bfs} to~$\Hc_{n}$ satisfies the following important property, relevant in our setting: it is the unique (up to a scale factor) scalar product on the space of degree-$n$ homogeneous polynomials on~$\C^{m+1}$ that is
invariant w.r.t.\ the unitary group. In other words, the Kostlan ensemble \eqref{eq:fn Kostlan def} is the real trace of the \textit{unique} unitary invariant Gaussian ensemble of homogeneous polynomials (although there exist many other ensembles invariant w.r.t.\ the orthogonal transformations~\cite{Kostlan93,Kostlan02}). In particular, the induced distribution on the space of hypersurfaces on~$\mathbb{RP}^m$ is also invariant w.r.t.\ the unitary group, which justifies our description of the nodal set of the Kostlan ensemble as a natural model for a `typical' real projective hypersurface.

\vspace{0.2cm}
As mentioned above, it will be convenient to consider $f_{n}$ as a Gaussian random field on the unit sphere $\mathbb{S}^m$, and henceforth we take exclusively this view. Computing
explicitly from~\eqref{eq:fn Kostlan def}, one may evaluate its covariance kernel~$\kappa_{n}:\mathbb{S}^{m}\times\mathbb{S}^{m}\rightarrow\R$ to be
\begin{equation}
\label{eq:covar kn Kostlan}
\kappa_{n}(x,y)=\E[f_{n}(x)\cdot f_{n}(y)] = \left(\langle x,y\rangle\right)^{n} = (\cos{\theta(x,y)})^{n},
\end{equation}
where for $x,y\in\mathbb{S}^{m}$ we denote $\theta(x,y)$ to be the angle between $x$ and $y$, also equal to the spherical distance between these points; this covariance kernel determines $f_{n}$ uniquely via Kolmogorov's Theorem.

The random field $f_{n}$ on $\mathbb{S}^{m}$ is of high merit since it is rotationally invariant and also admits a natural scaling around every point understood in the following way.
Let us fix $x_0 \in \mathbb{S}^{m}$, and define the {\em scaled} covariance kernel on $\R^{m}\times \R^{m}$
\begin{equation}
\label{eq:Kxn->Kinf}
K_{x_0;n}(x,y)= \kappa_{n}\left(\exp_{x_0}\left(\frac{x}{\sqrt{n}}\right),\exp_{x_0}\left(\frac{y}{\sqrt{n}}\right)\right),
\end{equation}
where $\exp_{x_0}:\R^{m}\rightarrow \mathbb{S}^{m}$ is the exponential map on the sphere based at~$x_0$. Then, as is shown formally in section \ref{sec:prf thm Kostlan} below, the scaled covariance $K_{x_0;n}(x,y)$ satisfies the convergence
\begin{equation}
\label{eq:Kx(u,v)->Kinfty(u-v)}
K_{x_0;n}(x,y)\rightarrow K_{\infty}(x,y)=e^{-\|x-y\|^{2}/2}
\end{equation}
along with all its derivatives, locally uniformly in $x,y \in \R^{m}$; the r.h.s.\ of \eqref{eq:Kx(u,v)->Kinfty(u-v)} is the defining covariance kernel of the Bargmann-Fock field on $\mathbb{R}^m$, discussed further below.

\subsection{RSW estimates for random subsets of Euclidean space}
\label{s:perc}

In percolation theory the RSW estimates ~\cite{Ru,SeWe} are uniform lower bounds for {\em crossing probabilities} of various percolation processes, most fundamentally for Bernoulli percolation. These are a crucial input into establishing the more refined properties of percolation processes, such as the sharpness of the phase transition and scaling limits for the interfaces of percolation clusters.
\vspace{0.2cm}

Let $\Tc$ be a periodic lattice (i.e.\ a periodic set of nodes and edges/bonds between each pair of adjacent nodes), and $p\in [0,1]$ a number. In Bernoulli bond percolation each edge of $\Tc$ is independently either open with probability $p$ or closed with probability $1-p$. This defines a (random) percolation subgraph $\Gc$ of $\Tc$ containing all vertices and only open edges. Alternatively one can think of colouring edges independently black (with probability $p$) or white (with probability $1-p$). In this case $\Gc$ is the black sub-graph.

A rather simple argument shows that there exists a {\em critical probability}: a number $p_{c}\in (0,1)$ such that for all $p>p_{c}$ the graph $\Gc$ a.s.\ contains an infinite {\em percolation cluster} (connected component of $\Gc$), and for all $p<p_{c}$ a.s.\ no such component exists. The more subtle behaviour of the percolation process for $p=p_{c}$, {\em critical percolation}, is of high intrinsic interest. Apart from being one of the most studied lattice models, it is also believed ~\cite{BS02} to represent the nodal structure of Laplace eigenfunctions on `generic' chaotic manifolds, in the high energy limit. For $\Tc$ possessing sufficient symmetries, the corresponding critical probability should be equal $p_{c}=1/2$; for the square lattice this was established rigorously by Kesten ~\cite{Ke}.

Let us assume that the lattice $\Tc$ is regularly embedded in $\R^2$, e.g.\
the canonical embedding of the square lattice as $\mathbb{Z}^2$ in $\R^2$.
For $\rho>1$, $s>0$ and $x_{0}\in \R^{2}$ a {\em box-crossing event} is the event that a rectangle
\[
R = x_{0}+[-\rho s/2,\rho s/2]\times [-s/2,s/2]
\]
centred at $x_{0}$ of size $s\times \rho s$ is traversed horizontally by a black cluster, i.e.\ there exists a connected component $\Cc$ of $\Gc$ such that $\Cc$, restricted to $R$, intersects both $\{x_0 -\rho s/2\} \times [-s/2,s/2]$ and $\{x_0 + \rho s/2\} \times [-s/2,s/2]$.  The basic RSW estimates for critical percolation are the assertion that, for every $\rho>1$ the corresponding crossing probability is bounded away from $0$ uniformly in the scale $s>0$, i.e.\ there exists a number $c(\rho)>0$ such that the probability of a box-crossing event is $\ge c(\rho)$ for all $s>0$, $x_{0}\in\R^{2}$. The analogous estimates hold for {\em quads}, i.e.\ triples $Q = (D;\gamma,\gamma')$, where $D$ is a piecewise-smooth domain, and $\gamma,\gamma'\subseteq\partial U$ are two disjoint boundary curves; in this case the RSW estimates assert that there exists a constant $c(D; \gamma,\gamma')>0$ such that the probability $p(D;\gamma,\gamma';s) $ that $sD = \{sx: x \in D\}$ contains a black cluster intersecting both $s\gamma$ and $s\gamma'$ is at least $c(D; \gamma,\gamma')$ for every $s>0$.

\vspace{0.2cm}
In the more general setting of random subsets of Euclidean space, Tassion ~\cite{Tas16} recently showed the validity of RSW estimates for the {\em Voronoi percolation}. Let $\Pc\subseteq\R^{2}$ be a Poisson point process on $\R^{2}$ with unit intensity, and for each $x\in\Pc$ construct the associated (random) Voronoi cell
\[ \Cc_{x} = \{z\in \R^{2}:\: \forall y\in \Pc\setminus\{x\}\rightarrow d(z,y)\ge d(z,x)\}; \]
the various Voronoi cells tile the plane disjointly save for boundary overlaps. Each of the cells is coloured black or white independently with probabilities $p$ and $1-p$
respectively; here again, by a duality argument, the critical probability is $p_{c}=1/2$ \cite{BoRi06}. In this setting Tassion ~\cite{Tas16} proved that RSW estimates hold on all scales; a somewhat weaker version due to Bolobas-Riordan ~\cite{BoRi06} established that the RSW estimates hold for an unbounded subsequence of scales.

\subsection{RSW estimates for the Bargmann-Fock space}

\label{sec:RSW Beffara-Gayet}

Our starting point is the recent work of Beffara-Gayet~\cite{BG16} that established the RSW estimates for the {\em nodal sets} of a family of stationary smooth Gaussian random fields on $\mathbb{R}^2$,
with {\em positive} and {\em rapidly decaying} correlations satisfying {\em sufficient symmetry}; the motivating and main example of such a field was the scaling limit of the Kostlan ensemble \eqref{eq:fn Kostlan def} for
dimension $m=2$. To the best of our knowledge, along with the very recent announcement
of Nazarov-Sodin on the variance of the number of nodal domains (to be published), Beffara-Gayet's result is the only heretofore known rigorous evidence or manifestation for the conjectured connections ~\cite{BS02} between percolation theory and nodal patterns.

\vspace{2mm}
Let $g_{\infty}:\R^{2}\rightarrow\R$ be the random field indexed by $(x_{1},x_{2})\in\R^{2}$
corresponding to the covariance kernel $K_{\infty}$
on the r.h.s.\ of \eqref{eq:Kx(u,v)->Kinfty(u-v)}. Then $g_{\infty}$ is an isotropic random field, a.s.\ smooth, which may be constructed explicitly as the series
\begin{equation}
\label{eq:ginf series def}
g_{\infty}(x) = \sum\limits_{i,j=0}^{\infty}a_{ij}\frac{1}{\sqrt{i!j!}} x_{1}^{i}x_{2}^{j}
\end{equation}
with $\{a_{ij}\}$ i.i.d.\ standard Gaussian random variables, and where the convergence is understood locally uniformly; hence the sample paths of $g_{\infty}$ are a.s.\ {\em real analytic}.
Equivalently, recall the Bargmann-Fock space in section~\ref{s:kost} above, and define the space $\Fc$ of analytic functions on~$\R^{2}$ that admit an analytic extension to $\C^2$ which lies in the Bargmann-Fock space; equip this space with the scalar product $\langle \cdot, \cdot \rangle_{BF}$ induced from \eqref{e:bfs}. We may then think of $g_{\infty}$ in~\eqref{eq:ginf series def} as the canonical Gaussian element of $\Fc$ (c.f.\ ~\cite[Appendix A.1]{BG16}), normalised to have unit variance.

Define the nodal components $\{\Cc_{i}\}_{i}$ of $g_{\infty}$ to be the connected components of
the nodal set~$g_{\infty}^{-1}(0)$, and the nodal domains $\{\Dc_{i}\}_{i}$ of $g_{\infty}$ to be the connected components of the complement
$\R^{2}\setminus g_{\infty}^{-1}(0)$ of the nodal set; a.s.\ all the nodal components $\{\Cc_{i}\}$ are simple smooth curves. Nazarov and Sodin ~\cite{NS15} proved that the number of nodal components $\Cc_{i}$ entirely contained in the disk of radius $R$ is asymptotic to
$c_{NS}\cdot R^{2}$ with $c_{NS}>0$ the `Nazarov-Sodin constant of $g_{\infty}$'. The main result of Beffara-Gayet ~\cite[Theorem 1.1]{BG16} was that the RSW estimates hold for the complement of the nodal set on all scales, and for the nodal set itself on all sufficiently large scales. The restriction to sufficiently large scales is natural, since the probability that the nodal set intersects a domain tends to zero with the size of the domain.

\vspace{2mm}
As was mentioned at the beginning of section~\ref{sec:RSW Beffara-Gayet}, other than for $g_{\infty}$ the result in \cite{BG16} also applies to a (somewhat limited) family of Gaussian random fields; these are fields which have sufficiently nice properties so that Tassion's aforementioned techniques and ideas are applicable. Since Tassion's ideas are also instrumental for the proofs of the results of this paper, the generality of our results are also limited in a similar way.

\subsection{Statement of the principal result: RSW estimates for the Kostlan ensemble}

Our aim is to prove the analogous RSW estimates for the $m=2$-dimensional Kostlan ensemble~\eqref{eq:fn Kostlan def}, without passing to the limit. In light of the discussion in subsection \ref{s:kost} above, these estimates can be interpreted as uniform bounds on crossing probabilities for a `typical' algebraic curve on $\mathbb{RP}^2$. The RSW estimates that we establish are stronger than those which can be deduced from the corresponding estimates~\cite{BG16} for the Bargmann-Fock limit field \eqref{eq:ginf series def}, since they also hold on macroscopic scales. Indeed, our main result (Theorem~\ref{t:kos} below) establishes RSW estimates that hold uniformly on the projective space (or sphere, after removal of antipodal points).

\vspace{2mm}
Naturally one could try to work in the same Euclidean setting as was used to establish the RSW estimates~\cite{BG16} on the Bargmann-Fock limit field \eqref{eq:ginf series def}. One would then consider ~\cite[p. 6]{BG16} the projection of $f_{n}$ on the Euclidean space via the natural embedding $\pi:\R^{2}\hookrightarrow \RP^{2}$ with $x=(x_{1},x_{2})\mapsto (1:x_{1}:x_{2})$; in this case the corresponding covariance kernel of
\[ \tilde{f_{n}}(x) = f_{n}(\pi(x)) \]
on $\R^{2}$, normalised to be unit variance, is
\begin{equation}
\label{eq:covar lamn Kostlan Euclid}
\lambda_{n}(x,y)= \frac{(1+\langle x,y \rangle)^{n}}{(1+\|x\|^{2})^{n/2} \cdot (1+\|y\|^{2})^{n/2}},
\end{equation}
where $\|\cdot\|$ is the standard Euclidean norm on $\R^{2}$.

Unfortunately this model does not enjoy particularly nice properties, being neither stationarity nor invariant w.r.t.\ negation of the second coordinate, key ingredients in Beffara-Gayet's (and Tassion's) argument. Our primary observation is that these properties \textit{do} hold for the spherical ensemble \eqref{eq:fn Kostlan def}. This will allow us to establish the RSW estimates directly for the spherical model, our main result, which we now prepare the ground for.

\vspace{2mm}

We begin by formally defining the RSW estimates as they apply to general sequences of random sets on the sphere; later this will be extended in an analogous way to the flat torus, see section~\ref{sec:RSW est symmetric} below. Let us start by introducing `quads' and their associated crossing events (c.f.\ the discussion in section \ref{s:perc} above).

\begin{definition}[Quads and crossing events]
\label{d:quad}
A quad $Q = (D; \gamma, \gamma')$ is a piecewise-smooth simply-connected (spherical) domain $D \subset \mathbb{S}^2$ and the choice of two disjoint boundary arcs  $\gamma,\gamma' \subset \partial D$. When we consider a quad $Q$ as a set, we will identify it with the closure of~$D$. For each $X \subseteq \mathbb{S}^2$ we denote by $\rm{Quad}_X$ the collection of quads $Q \subseteq X$.

To each quad $Q = (D; \gamma, \gamma')$ and random subset $\mathcal{S}$ of $\mathbb{S}^2$ we associate the `crossing event' $\Cc_Q(\Sc)$ that a connected component of $\Sc$, restricted to $D$, intersects both $\gamma$ and $\gamma'$. We shall sometimes use a phrase such as `$Q$ is crossed by $\Sc$' to describe the event $\Cc_Q(\Sc)$.
\end{definition}

Rather than stating the RSW estimates for \textit{rescaled} boxes or quads (as was done in \cite{BG16} and \cite{Tas16}  for instance), in non-Euclidean settings it is natural to state these estimates for a more general class of quads that can be `uniformly crossed by chains of boxes'; we introduce this concept now as it applies to the sphere. The following definition is rather technical but it is well illustrated by Figure \ref{fig:box chain}.

\begin{figure}
\centering
\includegraphics[width=0.5\textwidth]{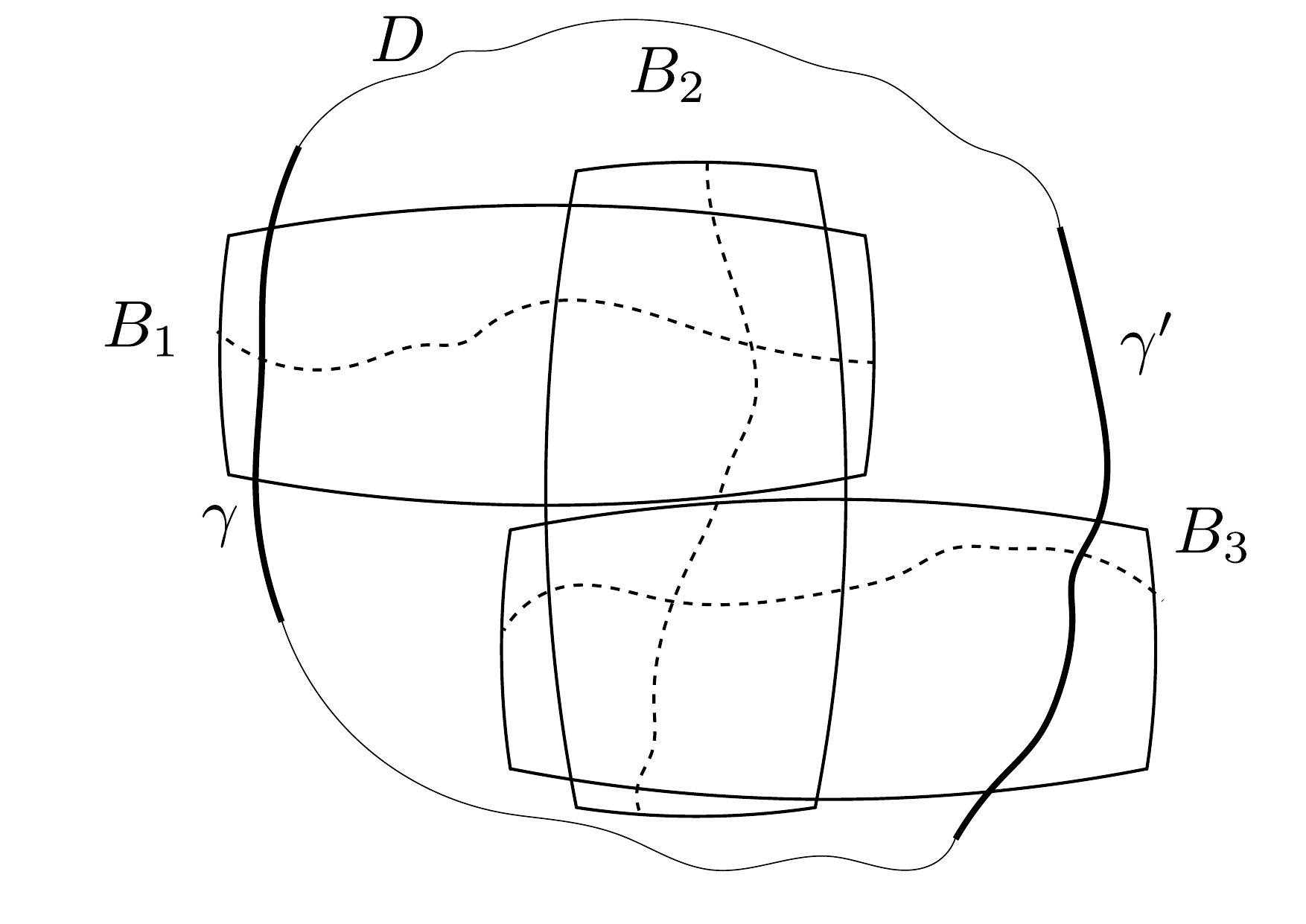}
\caption{A box-chain $(B_i)$ of length three that crosses a quad $Q =(D;\gamma,\gamma')$; if each of the boxes $B_i$ are crossed by a set $\Sc$ then so is the quad $Q$.}
\label{fig:box chain}
\end{figure}

\begin{definition}[Spherical boxes and box-chains]
\label{d:box chain}
~
\begin{enumerate}

\item For each $a, b > 0$, an $a \times b$ (spherical) rectangle $D \subset \mathbb{S}^2$ is a simply-connected domain that is bounded by four geodesic line-segments, with all four internal angles equal, and such that the non-adjacent pairs of boundary components have length $a$ and~$b$ respectively.  We refer to the four boundary components of a rectangle as its `sides', and shall call a rectangle with equal side-lengths a `square'.

\item An $a \times b$ box $B$ is a quad $Q = (D; \gamma, \gamma')$ in $\mathbb{S}^2$ such that $D$ is an $a \times b$ rectangle and such that $\gamma$ and $\gamma'$ are the opposite sides of length $a$. We refer to the sides of $B$ other than $\gamma$ and $\gamma'$ as the `lateral' sides. For each $X \subseteq \mathbb{S}^2$, $c \ge 1$ and $s > 0$, we denote by $\rm{Box}_{X; c }(s)$ the collection of all $a \times b$ boxes $B \subseteq X$ such that $s \le a , b \le cs$.

\item
A curve $\eta \subset D$ is said to `transversally cross' a box $B = (D; \gamma, \gamma')$ if a connected component of $\eta$, restricted to $D$, intersects both of the lateral sides of $D$; in particular $\gamma$ and $\gamma'$ always transversally cross $B$.

\item
A box $B = (D; \gamma, \gamma')$ is said to `transversally cross' another box $\hat{B}$ if both of the lateral sides of $D$ transversally cross $\hat{B}$; this definition is symmetric in the sense that it also implies that $\hat{B}$ transversally crosses $B$.

\item
A `box-chain' of length $n$ is a finite set $\{B_i\}_{1 \le i \le n}$ of boxes such that, for each $i = 2, \ldots, n$, $B_i$
transversally crosses $B_{i-1}$. A quad $Q = (D; \gamma, \gamma')$ is said to be `crossed' by a box-chain $\{B_i\}_{1 \le i \le n}$ if $\gamma$ transversally crosses $B_1$, $\gamma'$ transversally crosses $B_n$, and $\cup_{2 \le i \le n-1} B_i \subseteq D$.

\end{enumerate}

\end{definition}

The relevance of box-chains to RSW estimates can be seen from the following. Let $Q$ be a quad that is crossed by a box-chain $\{B_i\}$, and let $\Sc$ be a random subset of $\mathbb{S}^2$. Then if the event $\mathcal{C}_{B_i}(\Sc)$ holds for each $i$, so does the event $\Cc_Q(\Sc)$ (see Figure \ref{fig:box chain}). In other words, one may bound the probabilities of crossings of quads by controlling the crossings of box-chains instead. This motivates the following definition.

\begin{definition}[Quads that are uniformly crossed by box-chains]
\label{d:unifcross}
For each $X \subseteq \mathbb{S}^2$, $c \ge 1$ and $s > 0$, we denote by $\rm{Unif}_{X;c}(s)$ the collection of all quads $Q \in \rm{Quad}_X$ that are crossed by a box-chain $\{B_i\}_{1 \le i \le n}$ of length $n \le c$ such that $B_i \in \rm{Box}_{X;c}(s)$ for each $i$.
\end{definition}

The property of quads being uniformly crossed by box-chains generalises the notion of scale invariance on the sphere, with the parameter $c$ in the definition of $\rm{Unif}_{X;c}(s)$ playing the role of the `aspect ratio'. One can check, for instance, that for each quad $Q = (D; \gamma, \gamma')$ there is a $c > 1$ such that $\rm{Unif}_{\mathbb{S}^2;c}(s/c)$ contains the rescaled quad $sQ = (sD; s\gamma, s\gamma')$ for each $s \in (0, 1]$, where $sA$ denotes linear rescaling of the set $A$ along the unique geodesic to the origin (deleting the antipodal point if necessary). This can be seen by observing that, although rescaling does not preserve geodesics on the sphere, the resulting distortion is uniformly controlled on all small enough scales.

The property of being uniformly crossed by box-chains is also closely related to conformal invariants. One can check, for instance, that if a quad  $Q = (D; \gamma, \gamma')$ is crossed by a box-chain of length $n$ consisting of boxes from $\rm{Box}_{X;c}(s)$, then the extremal distance from $\gamma$ to $\gamma'$ in $D$ (which is the only conformal invariant of $Q$) is bounded above by $cn$, independently of $s$. In particular, for $Q \in \rm{Unif}_{X;c}(s)$ the extremal distance is uniformly bounded above by $c^2$.

\vspace{0.2cm}
We next introduce the RSW estimates as they apply to the sphere; these give a uniform lower bound on crossing probability for quads that are uniformly crossed by box-chains. We state the RSW estimates for arbitrary sequences of random subsets.

\begin{definition}[RSW estimates]
\label{d:rsw}
Let~$(\mathcal{S}_n)_{n \in \mathbb{N}}$ be a sequence of random subsets of~$\mathbb{S}^2$, let $X \subseteq \mathbb{S}^2$, and let $s_n \ge 0$ be a sequence satisfying $s_n \to 0$ as $n \to \infty$. We say that the sequence $(\mathcal{S}_n)_{n \in \mathbb{N}}$ `satisfies the RSW estimates on $X$ down to the scale $s_n$' if for every $c > 1$ there exists a $C > 0$ such that
\begin{equation}
\label{e:rsw1}
\liminf_{n \to \infty} \, \inf_{ s > C s_n } \, \inf_{ Q \in \rm{Unif}_{X;c}(s)}  \,  \mathbb{P}(\mathcal{C}_Q(\mathcal{S}_n)) > 0 .
\end{equation}
We say that the sequence $(\mathcal{S}_n)_{n \in \mathbb{N}}$ `satisfies the RSW estimates on $X$ on all scales' if \eqref{e:rsw1} holds for $s_n \equiv 0$. \end{definition}

Strictly speaking we should restrict the definition of the RSW estimates in \eqref{e:rsw1} to only hold for quads $Q$ such that $\Cc_Q(\Sc_n)$ is measurable. However, since we work only with $\Sc_n$ being level sets or excursion sets of a.s.\ $C^2$ Gaussian random fields, the events $\Cc_Q(\Sc_n)$ are always measurable and so we will ignore this technicality.

\vspace{2mm}

We are now ready to state our main result. Recall that the \textit{nodal sets} of the Kostlan ensemble are the (random) subsets $\mathcal{N}_n = f^{-1}(0)$ of the sphere; we also consider their complements, $\mathbb{S}^2 \setminus \mathcal{N}_n$.
Our principal result asserts that the RSW estimates in Definition \ref{d:rsw} hold down to the scale $n^{-1/2}$ for the nodal sets of the Kostlan ensemble, and on all scales for their complements (the latter estimates give a lower bound for the probability of a domain being crossed by a single nodal domain).

\begin{theorem}[RSW estimates for the nodal sets of the Kostlan ensemble]
\label{t:kos}
Let $X \subset \mathbb{S}^2$ be a subset whose closure does not contain pairs of antipodal points, and let $s_n = n^{-1/2}$. Then the following hold:
\begin{enumerate}
\item
The nodal sets of the Kostlan ensemble \eqref{eq:fn Kostlan def} on $\mathbb{S}^2$ satisfy the RSW estimates on~$X$ down to the scale $s_n$.

\item The complements of the nodal sets of the Kostlan ensemble \eqref{eq:fn Kostlan def} on $\mathbb{S}^2$ satisfy the RSW estimates on $X$ on all scales.
\end{enumerate}
\end{theorem}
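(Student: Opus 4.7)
The plan is to derive Theorem~\ref{t:kos} from a general RSW theorem for sequences of smooth Gaussian fields on the sphere---to be established in the body of the paper under the five hypotheses advertised in the abstract---by verifying those hypotheses for the Kostlan ensemble at the natural microscopic scale $s_n = n^{-1/2}$. Four of the five are essentially immediate from material in the excerpt. Rotational invariance of $f_n$ (hypothesis (i)) follows from the unitary invariance of the Bargmann-Fock scalar product discussed in Section~\ref{s:kost}. Smoothness in (ii) is automatic since $f_n$ is a polynomial, while non-degeneracy of the joint law of $f_n$ and its first two derivatives at a point reduces, by rotational invariance, to a short calculation with the derivatives of $\kappa_n(x,y) = \langle x,y\rangle^n$ on the diagonal. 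The local convergence of the rescaled kernel to the Bargmann-Fock kernel in (iii) is exactly~\eqref{eq:Kx(u,v)->Kinfty(u-v)}, to be verified formally in Section~\ref{sec:prf thm Kostlan}. For the uniform rapid decay (v), one reads off from the closed form $\kappa_n(x,y) = (\cos\theta(x,y))^n$ and the expansion $\log\cos\theta = -\theta^2/2 + O(\theta^4)$ near the origin the Gaussian-type decay of the covariance after rescaling distances by $\sqrt{n}$, uniformly in~$n$.

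The main technical obstacle is hypothesis (iv) on asymptotically non-negative correlations. Since $f_n(-x) = (-1)^n f_n(x)$, the kernel $\kappa_n$ is strictly negative throughout the antipodal hemisphere $\theta > \pi/2$ whenever $n$ is odd, and attains the value $-1$ at antipodal points; this is precisely the phenomenon that defeats the strict positivity hypothesis of \cite{Tas16,BG16} and is the reason the general theorem must be strengthened to tolerate small negative correlations. The redeeming feature, and the point of the restriction on $X$ in the theorem, is that if the closure of $X$ contains no pair of antipodal points then by compactness there exists $\theta_0 < \pi$ with $\theta(x,y) \leq \theta_0$ throughout $\overline{X}\times\overline{X}$; in particular, $|\kappa_n(x,y)| \leq \big(\cos(\pi-\theta_0)\big)^n$ wherever $\kappa_n$ is negative, which is exponentially small in~$n$. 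This is the sense in which the correlations are asymptotically non-negative, and should be precisely the input required by the general theorem.

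With the hypotheses verified, part~(1) is an immediate application of the general theorem. For part~(2), the general theorem delivers the bound at scales $s \geq C n^{-1/2}$; the remaining small scales $s < C n^{-1/2}$ are handled by a separate and essentially trivial argument, namely that a quad of diameter $O(n^{-1/2})$ lies entirely inside a single nodal domain of $f_n$ with probability bounded away from zero---at such scales the rescaled field is nearly constant and is positive or negative with equal probability---so the crossing event $\mathcal{C}_Q(\mathbb{S}^2 \setminus \mathcal{N}_n)$ is only easier to achieve there.
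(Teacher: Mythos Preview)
Your proposal is correct and follows essentially the same approach as the paper: Theorem~\ref{t:kos} is deduced from the general Theorem~\ref{t:main} by verifying its five hypotheses for the Kostlan kernel $\kappa_n=(\cos\theta)^n$, with the antipode-free assumption on $X$ forcing the negative part of $\kappa_n$ to be exponentially small in $n$, and the Gaussian-type decay $|\kappa_n|\le e^{-c(\theta/s_n)^2}$ giving the uniform rapid decay. The only difference is organisational: in the paper the general Theorem~\ref{t:main} already delivers the RSW estimates for the complements of the nodal sets on \emph{all} scales, so your separate small-scale argument for part~(2) is unnecessary as a post-processing step---it appears instead inside the machinery, as condition~(5) of the abstract Theorem~\ref{t:rsw} (verified via Corollary~\ref{c:cont}), which is precisely the observation that on balls of radius $O(s_n)$ the field has constant sign with probability bounded away from zero.
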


We constrain the RSW estimates to apply only to a set $X$ whose closure does not contain pairs of antipodal points since the Kostlan ensemble is naturally defined on the projective space; indeed, the RSW estimates do not hold on the whole of the sphere, as certain crossing events on the sphere are impossible due to the identification of points on the projective space.

The scales on which we prove the RSW estimates in Theorem \ref{t:kos} are optimal in the sense that these estimates fail for the nodal set on smaller scales than $s_n = n^{-1/2}$. To see this, recall that $s_n$ is the scale on which the local uniform convergence of the ensemble in \eqref{eq:Kx(u,v)->Kinfty(u-v)} takes place (in what follows, we often refer to this as the `microscopic scale'), and since the probability that a nodal set crosses a quad in the limit field tends to zero as the size of the quad tends to zero, the same is true for the Kostlan ensemble on scales smaller than $s_n$.

\subsection{Acknowledgements}

The research leading to these results has received funding from the Engineering \& Physical Sciences Research Council (EPSRC) Fellowship EP/M002896/1 held by Dmitry Beliaev (D.B. \& S.M.), the  EPSRC Grant EP/N009436/1 held by Yan Fyodorov (S.M.), and the European Research Council under the European Union's Seventh Framework Programme (FP7/2007-2013), ERC grant agreement n$^{\text{o}}$ 335141 (I.W.) The authors would like to thank Damien Gayet, Mikhail Sodin and Dmitri Panov for useful discussions.

%%%%%%%%%%%%%%%%%%%
\smallskip
\section{Outline of the paper}

Theorem \ref{t:kos} is a particular case of a more general result, Theorem \ref{t:main} below, which asserts the RSW estimates for the nodal sets, and their complements, of general sequences of centred Gaussian random fields defined on smooth compact Riemannian manifolds $\Xb$ satisfying sufficient symmetries. In turn, the proof of Theorem \ref{t:main} has, as its main ingredient, the even more general Theorem \ref{t:rsw}, which asserts RSW estimates for abstract sequences of random sets obeying a natural scaling (as well as certain other conditions).
We believe that both the general Theorem~\ref{t:main} and the abstract Theorem \ref{t:rsw} are of independent interest.

\subsection{General RSW estimates for nodal sets of sequences of Gaussian random fields}
\label{sec:RSW est symmetric}

Let $\mathbb{X}$ be either the flat torus $\mathbb{T}^2 = \mathbb{R}^2 \setminus \mathbb{Z}^2$ or the unit sphere~$\mathbb{S}^2$, and equip $\mathbb{X}$ with a marked origin $0 \in \mathbb{X}$ and its natural metric $d(\cdot, \cdot)$. We consider a sequence $(f_{n})_{n \in \mathbb{N}}$ of Gaussian random fields defined on $\Xb$.

\vspace{0.2cm}
Our first task is to define the relevant RSW estimates, which will be the natural generalisation of Definition \ref{d:rsw} to $\mathbb{X}$. To begin we can define `quads' and `crossing events' analogously to Definition~\ref{d:quad} (i.e.\ replacing $\mathbb{S}^2$ everywhere with $\mathbb{X}$). Before discussing `boxes' as in Definition~\ref{d:box chain}, we need to alter slightly the definition of `rectangles' in the case $\mathbb{X} = \mathbb{T}^2$, namely restricting their sides to be parallel to the axes; this is so that we may work with fields on~$\mathbb{T}^2$ that are not assumed to be rotationally symmetric. For clarity, we restate this definition (with the difference to Definition \ref{d:box chain} emphasised).

\begin{definition}[Toral rectangles]
\label{d:box2}
For each $a, b > 0$, an $a \times b$ (toral) rectangle $D \subset \mathbb{T}^2$ is a simply-connected domain that is bounded by four geodesic line-segments \textbf{that are parallel to the axes}, with all four internal angles equal, and such that the non-adjacent pairs of boundary components have length $a$ and $b$ respectively.
\end{definition}

With this definition of toral rectangles, `boxes' are defined as in Definition \ref{d:box chain}; the notion of `box-crossings' of quads, as well as the set $\rm{Unif}_{X;c}(s)$, are then analogous to in Definitions~\ref{d:box chain} and~\ref{d:unifcross}. Finally, RSW estimates are defined analogously to Definition~\ref{d:rsw}.

\vspace{0.2cm}
We next state various conditions that we impose on the Gaussian random fields $(f_n)_{n \in \mathbb{N}}$ that we consider; these are most conveniently framed in terms of their covariance kernels. We first describe a set of relevant symmetries that these covariance kernels must satisfy. These symmetries naturally limit the choice of the underlying space to $\mathbb{T}^2$ and $\mathbb{S}^2$.

\begin{definition}[Symmetry]
\label{a:symmetry}
We say that a covariance kernel on $\mathbb{X}$ is `symmetric' if:
\begin{enumerate}
\item In the case $\mathbb{X} = \mathbb{S}^2$, it is rotationally invariant and symmetric w.r.t.\ reflection in any great circle;
\item In the case $\mathbb{X} = \mathbb{T}^2$, it is stationary and possesses the $D_4$ symmetry, i.e., it is invariant w.r.t.\ horizontal reflection and rotation by $\pi/2$.
\end{enumerate}
\end{definition}

Next we impose certain smoothness and non-degeneracy conditions on symmetric covariance kernels
(in the sense of Definition \ref{a:symmetry}). When we work with symmetric covariance kernels, we often naturally consider them as functions of one variable, i.e.\ setting $\kappa(x) = \kappa(0, x)$ (with a slight abuse of notation).

\begin{assumption}[Smoothness and non-degeneracy]
\label{a:nondegen}
A symmetric covariance kernel $\kappa$ on~$\mathbb{X}$ satisfies the following:
\begin{enumerate}
\item The function $\kappa(x)$ is $C^6$;

\item The Hessian $H_{\kappa}(0)$ of $\kappa$ at the origin is positive-definite.

\end{enumerate}
\end{assumption}

By the standard theory \cite{Adler-Taylor}, Assumption \ref{a:nondegen} guarantees that the associated random field
is a.s.\ $C^{2}$, and its nodal set a.s.\ consists of $C^2$ curves diffeomorphic to circles.

\vspace{0.2cm}
Finally, we define the concept of `local uniform convergence' for a sequence of covariance kernels on $\mathbb{X}$, generalising our discussion of the local limit \eqref{eq:Kx(u,v)->Kinfty(u-v)} of the Kostlan ensemble above. Let $\Phi : \mathbb{R}^2 \to \mathbb{X}$ denote a smooth map that is locally a linear isometry (i.e.\ such that $\Phi(0) = 0$ and the differential $d\Phi$ is a linear isometry); in the case $\mathbb{X} = \mathbb{T}^2 $ one may take the covering map for instance, whereas in the case $\mathbb{X} = \mathbb{S}^2$ one may take the exponential map based at the origin, as in \eqref{eq:Kxn->Kinf}.

\begin{definition}[Local uniform convergence of the covariance kernels near the origin; c.f. ~\cite{NS15}, Definition $2$]
\label{def:loc unif scal}
For a sequence $s_n > 0$ satisfying $s_n \to 0$ as $n \to \infty$ we say that covariance kernels $(\kappa_n)_{n \in \mathbb{N}}$ on $\mathbb{X}$ `converge locally uniformly near the origin on the scale $s_n$' if there exists a symmetric covariance kernel $K_\infty$ on $\mathbb{R}^2$, satisfying Assumption \ref{a:nondegen}, and an open set $U \subseteq \mathbb{R}^2$ containing the origin such that, as $n \to \infty$, for $x, y \in U$ uniformly,
\begin{equation}
\label{eq:Kn(x,y)->Kinf(x-y)}
K_n(x, y) = \kappa_n(  \Phi(s_n x),  \Phi( s_n y) ) \to   K_\infty(x-y) .
\end{equation}
We say that the covariance kernels $(\kappa_n)_{n \in \mathbb{N}}$ on $\mathbb{X}$ `converge locally uniformly near the origin on the scale $s_n$ along with their first four derivatives' if the above holds also for all partial derivatives $K_n$ of order up to $4$.
\end{definition}

We are now ready to state our general result Theorem \ref{t:main}; the proof that Theorem \ref{t:kos} is a special case
of Theorem \ref{t:main} is given in section \ref{sec:prf thm Kostlan} below.

\begin{theorem}[RSW estimates for general sequences of Gaussian random fields]
\label{t:main}
Let $(f_n)_{n \in \mathbb{N}}$ be a sequence of centred Gaussian random fields on $\mathbb{X}$ with respective covariance kernels $\kappa_n$. Suppose that there exists a constant $\eta > 0$, a set $X \subseteq \mathbb{X}$, and a  sequence $s_n >0$ satisfying $s_n \to 0$ as $n \to \infty$, such that the following hold:
\begin{enumerate}
\item Symmetry: The covariance kernels $\kappa_n$ are symmetric in the sense of Definition \ref{a:symmetry}.
\item Smoothness and non-degeneracy: The covariance kernels $\kappa_n$ satisfy Assumption \ref{a:nondegen}.
\item Local uniform convergence near the origin: The covariance kernels $\kappa_n$ converge locally uniformly near the
origin on the scale $s_n$ along with their first four derivatives.
\item Asymptotically non-negative correlations:
\begin{equation}
\label{eq:asymp pos}
\lim\limits_{n \to \infty}    s_n^{-12 - \eta}  \sup_{x,y \in X }  (\kappa_n( x, y )  \wedge 0 )  = 0 .
\end{equation}
\item Uniform rapid decay of correlations:
\begin{equation}
\label{eq:rapid decay corr}
\lim_{C \to \infty} \,  \limsup_{n \to \infty}   \sup_{ \substack{x, y \in X, \\   d(x,y) > C s_n }} \left(d(x,y)s_{n}^{-1}\right)^{18 + \eta} |\kappa_n( x , y)| = 0 .
\end{equation}

\end{enumerate}

Then the nodal sets of $f_n$ satisfy the RSW estimates on $X$ down to the scale $s_n$, and the complements of the nodal sets of $f_n$ satisfy the RSW estimates on $X$ on all scales.

\end{theorem}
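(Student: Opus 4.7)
The plan is to reduce Theorem \ref{t:main} to the abstract Theorem \ref{t:rsw} announced in the outline, by checking that the nodal sets $\mathcal{N}_n = f_n^{-1}(0)$ and their complements verify the hypotheses of that abstract RSW theorem: namely (a) a uniform positive lower bound for the crossing probability of a single box of unit aspect ratio at the microscopic scale $s_n$; (b) an approximate independence property (a quantitative substitute for FKG) for crossing events supported in well-separated regions; and (c) the full set of symmetries supplied by Definition \ref{a:symmetry}. Once these three abstract ingredients are in hand, Theorem \ref{t:rsw} converts them into the RSW estimates claimed in Theorem \ref{t:main}.

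First I would establish the microscopic input (a). Fix a quad $Q \subset \mathbb{R}^2$ of bounded aspect ratio, place it at a basepoint $x_0 \in X$, and form the rescaled field $g_n(x) = f_n(\Phi_{x_0}(s_n x))$, where $\Phi_{x_0}$ is the local isometry based at $x_0$. Condition (3) of the theorem yields $C^4$ convergence of the covariance of $g_n$ to $K_\infty$ on compacts, and Assumption \ref{a:nondegen} on $K_\infty$ guarantees that the nodal set of the limit field is a.s.\ a collection of smooth curves meeting $\partial Q$ transversally. By Gaussian weak convergence of $g_n$ together with its first two derivatives, crossing events for the nodal set are continuity points of the law of the limit field, so the microscopic crossing probability converges to that of the Bargmann-Fock-like limit. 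For this limit field the RSW estimate at unit scale follows from \cite{BG16}; symmetry condition (1) then transfers the bound uniformly in $x_0 \in X$.

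Next I would obtain the quasi-independence property (b), which is where conditions (4) and (5) are used. The uniform rapid decay (5) permits a Gaussian truncation: the field $f_n$ can be approximated by one whose covariance vanishes outside pairs with $d(x,y) \le C s_n$, with the cost of replacement controlled in a Cameron-Martin norm by the polynomial tail in (5). Choosing $C$ large, crossing events in quads separated by more than $2Cs_n$ are exactly independent for the truncation, and the cost of passing from the truncated nodal set to the true one is $o(1)$ uniformly after a standard $C^1$-discretisation of the crossing event (whose cardinality grows polynomially in $s_n^{-1}$, which is what forces the exponent $24 + \eta$). The asymptotically non-negative correlation hypothesis (4) plays the role of FKG: writing $\kappa_n$ as a non-negative kernel plus a perturbation of uniform size $O(s_n^{12+\eta})$, Pitt's theorem gives positive association for the principal component, while a Slepian-type Gaussian comparison shows that the perturbation's effect on any polynomial-in-$s_n^{-1}$ collection of crossing events is negligible, with the exponent $12 + \eta$ chosen precisely for this purpose.

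The hardest step, I expect, will be the clean interfacing of these two quantitative Gaussian inputs with the monotone Tassion-style induction hidden inside Theorem \ref{t:rsw}. Tassion's argument is intrinsically FKG-based and proceeds through $O(\log s_n^{-1})$ successive doublings of the scale, at each of which crossings of adjacent boxes are combined; one must verify that the accumulated error from (4) and (5) stays $o(1)$ across all these doublings, which is what dictates the precise polynomial exponents. A secondary but genuine difficulty is that on $\mathbb{X}=\mathbb{S}^2$ there is no exact scale invariance: the choice of \emph{box-chain} in Definition \ref{d:box chain}, together with the class $\mathrm{Unif}_{X;c}$ of quads uniformly crossed by bounded-aspect-ratio boxes, is tailored so that microscopic RSW and quasi-independence compose cleanly at all scales, and verifying that a quad in $\mathrm{Unif}_{X;c}(s)$ admits the requisite chain of well-separated boxes is exactly the content for which Definition \ref{d:unifcross} was engineered.
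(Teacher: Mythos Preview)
Your overall architecture—reduce to Theorem \ref{t:rsw} and verify its hypotheses using a perturbation analysis driven by conditions (4) and (5)—is correct in spirit, but there are two genuine gaps.

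First, Theorem \ref{t:rsw} requires \emph{exact} positive associations on $X$ (its condition (3)), not an approximate version. Your plan is to split $\kappa_n$ into a non-negative part plus an $O(s_n^{12+\eta})$ remainder and carry the resulting error through Tassion's argument; but that induction invokes FKG at every one of $O(\log s_n^{-1})$ scale doublings, each time on intersections of several increasing events, and it is not clear the errors stay controlled. The paper sidesteps this: it perturbs the \emph{field} first, replacing $\kappa_n$ by $\tilde\kappa_n=\kappa_n+s_n^{12+\eta/2}$, so that by condition (4) the perturbed kernel is genuinely non-negative on $X$ for large $n$ and Pitt's theorem gives exact FKG. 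Theorem \ref{t:rsw} is applied to the excursion sets $\tilde{\mathcal S}_n^+$ of this perturbed field, and only afterwards is the perturbation removed by a single comparison (Proposition \ref{p:neg}) built on the discretisation of crossing events into $\lesssim \varepsilon^{-2}(s/s_n)^{6}$ signs (Proposition \ref{p:meas} and Lemma \ref{l:comp}).

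Second, and more seriously, you propose to feed the nodal sets $\mathcal N_n$ directly into Theorem \ref{t:rsw}. But nodal sets are not monotone functionals of the field and satisfy no FKG-type inequality, exact or approximate; nor do they cross square boxes with probability bounded away from zero on all scales (condition (4) of Theorem \ref{t:rsw} fails for $\mathcal N_n$ at scales $\ll s_n$). The paper never applies Theorem \ref{t:rsw} to $\mathcal N_n$. Instead, once RSW for the excursion sets $\mathcal S_n^\pm$ is in hand, nodal RSW is obtained by a separate step (Lemma \ref{l:geom}(2) together with \eqref{eq:cross prob asymp ind} of Proposition \ref{p:qi}): inside any $Q\in\mathrm{Unif}_{X;c}(s)$ one places two well-separated sub-polygons $P_1,P_2\in\mathrm{Unif}_{X;c_1}(s/c_1)$ such that $\{\mathcal C_{P_1}(\mathcal S_n^+)\}\cap\{\mathcal C_{P_2}(\mathcal S_n^-)\}$ forces a nodal crossing of $Q$, and the quasi-independence coming from condition (5) shows this intersection has probability bounded below. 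Your proposal omits this mechanism entirely.

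A minor point: your microscopic input (a) invokes the full RSW result of \cite{BG16} for the limit field, but much less is needed. Condition (5) of Theorem \ref{t:rsw} only asks that every quad inside a ball $B(\delta s_n)$ be crossed with uniformly positive probability, and this follows from the elementary event $\{f_n>0\text{ on }B(\delta s_n)\}$ having positive probability (Lemma \ref{l:exp} and Corollary \ref{c:cont}). Likewise, condition (6) of Theorem \ref{t:rsw} is not a raw quasi-independence statement but the specific ``thick annulus'' criterion; the paper packages the quasi-independence into exactly this form in Corollary \ref{c:qi}.
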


In Theorem \ref{t:main} the covariance kernels are, in principle, allowed to be negative, unlike for the Gaussian random field considered in \cite{BG16}; this is crucial for our application to the Kostlan ensemble since, for $n$ odd, the Kostlan ensemble is only positively correlated within a subset of the sphere. Nevertheless, since the negative correlations in the Kostlan ensemble decay exponentially rapidly as a function of $n$ for any subset $X$ whose closure does not contain antipodal points, condition \eqref{eq:rapid decay corr} is satisfied.

In regards to the nature of the exponents $12$ and $18$ in \eqref{eq:asymp pos} and \eqref{eq:rapid decay corr} respectively,
these are certainly not optimal for the claimed results, and are chosen mainly for simplicity. In fact, using the somewhat more sophisticated methods in \cite{BM}, if we additionally assume local uniform convergence of the first \textit{six} derivatives of the covariance kernel we could reduce these exponents to $8$ and $12$ respectively. Moreover, with an extra assumption that the covariance kernels $\kappa_n$ are smooth with derivatives decaying at least as rapidly as the kernel, and if the local convergence \eqref{eq:Kn(x,y)->Kinf(x-y)} of the covariance kernels holds together with \textit{all} derivatives, using the method in \cite{BM2} we could further reduce these exponents to $4$ and $6$. For simplicity, we do not implement these improvements here; on the other hand the question of the optimal exponents in \eqref{eq:asymp pos} and \eqref{eq:rapid decay corr}
is of considerable importance.

\vspace{0.2cm}
To complete section \ref{sec:RSW est symmetric},
we give an example of an application of Theorem \ref{t:main} to a sequence of Gaussian random fields defined on the flat torus; that this example falls under the scope of Theorem \ref{t:main} is established in section \ref{sec:prf thm Kostlan}.

\begin{example}
\label{ex:torus}
Let $(f_n)_{n \in \mathbb{N}}$ be the sequence of centred stationary Gaussian random fields on the torus $\mathbb{T}^2$ with respective covariance kernels
\[ \kappa_n( x, y) =  \big( \cos \left( 2\pi (x_1 - y_1) \right) \cdot \cos \left( 2 \pi (x_2 - y_2)  \right) \big)^n , \quad (x, y) = ((x_1, x_2), (y_1, y_2) ) \in \mathbb{T}^2 \times \mathbb{T}^2. \]
Let $X \subseteq \mathbb{T}^2$ be subset whose closure contains no distinct points $(x_1,y_1)$ and $(x_2, y_2)$ such that $2(x_1 - y_1)$ and $2(x_2 - y_2)$ are integers. Then the nodal sets of $f_n$ satisfy the RSW estimates on $X$ down to the scale $s_n = n^{-1/2}$, and the complements of the nodal sets of $f_n$ satisfy the RSW estimates on $X$ on all scales.
\end{example}

The restriction on $X$ is imposed, once again, since the nodal sets are naturally defined on a quotient space of $\mathbb{T}^2$, and indeed the RSW estimates fail on the whole space.

\subsection{Overview of the proof of Theorem \ref{t:main}}
Similar to ~\cite{BG16}, the overall structure of the proof of Theorem \ref{t:main} consists of three main steps:

\begin{enumerate}

\item First (see section \ref{s:rsw}) we adapt an argument borrowed
from \cite{Tas16} to establish general RSW estimates for abstract sequences of random sets on $\mathbb{X}$
satisfying certain key assumptions. These general estimates are stated as Theorem \ref{t:rsw} below.

\item Next, we develop a sufficiently robust perturbation analysis that allows us to
apply the abstract RSW estimates to the complement of the nodal sets (in fact, separately to the positive
and negative excursion sets $f_{n}^{-1}(0,\infty)$ and $f_{n}^{-1}(-\infty,0)$ respectively)
of the Gaussian random fields $f_n$ in the setting of Theorem \ref{t:main} (see section \ref{sec:pert analys}).
This perturbation analysis is used in two key place in the proof, namely in establishing (i) that~\eqref{eq:rapid decay corr} guarantees the `asymptotic independence' of crossing events in well-separated domains, and (ii) that negative correlations satisfying ~\eqref{eq:asymp pos} have a negligible effect on crossing probabilities.
\item Finally, we again apply the `asymptotic independence' of crossing events to infer the RSW estimates for
the nodal sets from the RSW estimates for the complements of the nodal sets; this follows from similar arguments to those presented in \cite{BG16} (see the second
part of the proof of Theorem \ref{t:main} in section \ref{sec:main thm concl}).

\end{enumerate}

Despite the structural similarities between our approach to \cite{BG16}, we record three significant modifications that we make here. First, it is necessary to adapt the argument in \cite{Tas16} to handle the differences in our setting, namely: (i) the presence of a sequence of random sets rather than just a single random set; (ii) the fact that we work on (bounded) manifolds rather than the Euclidean plane; and (iii) in the spherical case, the positive curvature of the sphere. We believe these modifications to be of independent interest, since, to the best of our knowledge, no theory of RSW estimates exists outside the scope of Euclidean space, and our approach is the first step in this direction.

Second, we apply the general argument in \cite{Tas16} in a different manner compared to \cite{BG16}, in particular with regards to the treatment of the asymptotic independence of crossing events (see the comments at the end of section \ref{s:rsw2}). We believe that our approach yields a significant simplification of the argument presented in \cite{BG16}.
Finally, our argument is able to handle negative correlations, as long as these are asymptotically negligible; negative correlations were absent from the model considered in \cite{BG16}.

\subsection{RSW estimates for abstract sequences of random sets}
\label{s:rsw2}

We give here the statement of the abstract RSW estimates for general sequences of random sets on $\mathbb{X}$; establishing this abstract result is the first step towards the proof of Theorem \ref{t:main}. To this end, we first need to define the analogues of Euclidean annuli and their related crossing events.
Recall the definition of a `square' (definitions \ref{d:box chain} and \ref{d:box2}), and observe that a square has a natural `centre', being the unique interior point equidistant from each side.

%\filbreak
\begin{definition}[Annuli and circuit crossing events]
\label{d:ann}~

\begin{enumerate}

\item For $b > a > 0$, an $a \times b$ `annulus' is a domain bounded between concentric squares with side-lengths $a$ and $b$ that
are `parallel', i.e.\ such that there is a single geodesic that intersects both boundary squares at the mid-points of opposite sides.

\item For each $X \subseteq \mathbb{X}$, $c \ge 1$, $r \ge 1$ and $s > 0$, we denote by $\rm{Ann}_{X;c;r}(s)$ the collection of
all $a \times b$ annuli $A \subset X$ such that $s \le a \le b \le cs$ and $b/a = r$.

\item To each annulus $A$ and random subset $\Sc$ of $\mathbb{X}$ we associate the `crossing event' $\Cc_A(\Sc)$ that a
connected component of $\Sc$, restricted to $A$, contains a `circuit' around $A$.

\end{enumerate}
\end{definition}

For each $r > 0$ and $v \in \mathbb{S}^1$, let $B(r) \subseteq \mathbb{X}$ denote the centred open ball of radius $r$, and let~$\mathcal{L}_v(r)$ denote the geodesic line-segment of length $r$, based at the origin, in direction $v$.
Our abstract RSW estimates are the following.

\begin{theorem}[RSW estimates for abstract sequences of random sets]
\label{t:rsw}
Let $(\mathcal{S}_n)_{n \in \mathbb{N}}$ be a collection of random sets on $\mathbb{X}$. Suppose that there exists a set $X \subseteq \mathbb{X}$ and a sequence $s_n > 0$ satisfying $s_n \to 0$ such that the following hold:
\begin{enumerate}

\item Non-degeneracy: For every $n \in \mathbb{N}$, $\mathbb{P}( 0 \in \partial S_n ) = 0$. Moreover, for every $v \in \mathbb{S}^1$,
\begin{equation}
\label{e:nondegenrsw}
\lim_{r \to 0} \, \liminf_{n \to \infty} \, \mathbb{P}(   \mathcal{L}_v(r s_n)  \cap \partial S  = \varnothing  ) = 1 .
 \end{equation}

\item Symmetry: For every $n \in \mathbb{N}$, the law of $\mathcal{S}_n$ satisfies the following symmetries: In the case $\mathbb{X} = \mathbb{S}^2$, invariance w.r.t.\
rotations and reflections w.r.t.\ great circles; in the case $\mathbb{X} = \mathbb{T}^2$, invariance w.r.t.\ translations, horizontal reflections and rotation by $\pi/2$.

\item Positive associations: For every $n \in \mathbb{N}$, all events measurable on $X$ and increasing w.r.t.\ the indicator function of $\mathcal{S}_n$ are positively correlated.

\item Crossing of square boxes on arbitrary scales:
\[
\liminf_{n \to \infty} \, \inf_{s >0 } \, \inf_{ B \in \rm{Box}_{X; 1}(s) } \, \mathbb{P}( \mathcal{C}_B(\mathcal{S}_n) ) > 0.
\]

\item Arbitrary crossings on the microscopic scale: There exists a number $\delta > 0$ such that
\[
\liminf_{n \to \infty} \, \inf_{ Q \in \rm{Quad}_{B(\delta s_n)  } } \, \mathbb{P} ( \mathcal{C}_Q(\mathcal{S}_n) ) > 0 . \]

\item Annular crossings of a `thick' annulus with high probability:
For each $c > 0$, $r > 1$ and $\varepsilon \in (0, 1)$, there exist $C_1, C_2 > 1$ such that, for all sufficiently large $n \in \mathbb{N}$ and all $s > C_1 s_n$, if
\[   \inf_{A \in \rm{Ann}_{X; C_2; r}(s)}  \P(\Cc_A(\Sc_n)  ) > c  , \]
then, for any $s \times C_2s$ annulus $A \subseteq X$,
\[ \P( \Cc_A(S_n))>1-\varepsilon. \]
\end{enumerate}

Then the collection of sets $(\mathcal{S}_n)_{n \in \mathbb{N}}$ satisfies the RSW estimates on $X$ on all scales.

\end{theorem}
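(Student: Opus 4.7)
The plan is to adapt Tassion's RSW argument \cite{Tas16} to our sequence-of-random-sets, compact-manifold setting, treating separately the macroscopic regime $s \geq C s_n$ (where one bootstraps from the square-crossing bound (4)) and the microscopic regime $s \leq C s_n$ (where one invokes (5) directly), and then extending both to arbitrary quads via FKG on box-chains.

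First I would use symmetry~(2), positive associations~(3), and the square bound~(4) to bootstrap the uniform positive lower bound on crossings of $s \times s$ square boxes (in their defining direction) into crossings of $s \times s$ squares in either coordinate direction, and then into crossings of $s \times 2s$ rectangles in the \emph{easy} direction by a standard FKG gluing of two overlapping square crossings. The core step is Tassion's construction: one identifies a landmark (a ``stopping angle'' or pivotal-like point on a boundary interval) at which a suitable conditional crossing probability transitions from small to large, and then reflects about this landmark using the symmetry (2) to glue two partial crossings into a long crossing of an $s \times 2s$ rectangle in the \emph{hard} direction. Assumption~(1) rules out pathological coincidences of $\partial \Sc_n$ with the landmark, so the reflection step operates as in the Euclidean case. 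Iterating this bootstrap yields a uniform lower bound on crossings of rectangles of any fixed aspect ratio, valid for all $s \geq C s_n$.

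Next, four long-rectangle crossings combined via FKG produce a circuit in any annulus $A \in \rm{Ann}_{X;c;r}(s)$ with probability bounded below, at which point assumption~(6) upgrades these positive circuit probabilities to probabilities arbitrarily close to $1$ on sufficiently thick annuli. For the final RSW estimate, given $Q \in \rm{Unif}_{X;c}(s)$ at any macroscopic scale, by definition $Q$ admits a box-chain $\{B_i\}_{i\leq n}$ of length $n \leq c$ whose boxes have aspect ratio at most $c$ and scale in $[s, cs]$; since $\bigcap_i \Cc_{B_i}(\Sc_n) \subseteq \Cc_Q(\Sc_n)$, FKG delivers the required lower bound. For $s \leq C s_n$ the same box-chain argument applies, with the input coming from (5): by symmetry (rotations on $\mathbb{S}^2$, translations on $\mathbb{T}^2$) the bound in (5) transfers from the ball $B(\delta s_n)$ at the origin to any ball of this radius centred at a point of $X$, and a box-chain realising $Q \in \rm{Unif}_{X;c}(s)$ with $s \leq C s_n$ can be assembled from such small quads.

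The main obstacle I expect is the adaptation of Tassion's reflection-gluing step to the sphere: reflection across the middle geodesic of a spherical rectangle is a genuinely non-affine isometry, and one must verify both that it remains compatible with the continuous landmark parametrisation underlying the stopping-angle definition, and that the resulting metric distortion is controlled uniformly on $X$ (which excludes antipodal pairs by hypothesis). A secondary, more bookkeeping, difficulty is to check that the positive circuit probabilities produced just before invoking~(6) are uniform enough in $n$ and $s$ to serve as the quantitative input constant $c$ required by that assumption, so that the upgrade to high-probability circuits is available on every relevant scale, both for the box-chain argument here and for the later passage from complements of nodal sets to nodal sets themselves.
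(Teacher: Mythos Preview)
Your proposal follows the right broad outline (Tassion's construction, FKG gluing, box-chains for general quads), but the logical placement of conditions~(1) and~(6) is wrong, and this is a genuine gap.

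You describe Tassion's reflection step as directly yielding long-rectangle crossings at \emph{every} scale $s\ge Cs_n$, and then invoke~(6) afterwards merely to ``upgrade'' circuit probabilities. That is not how the argument works. Tassion's construction only produces a long-rectangle crossing at a scale $s$ that is \emph{good}, meaning $\alpha_s\le 2\alpha_{2s/3}$ (where $\alpha_s$ is the landmark you allude to). A priori nothing prevents long stretches of bad scales. The paper uses condition~(6) precisely to rule this out: if $s$ is good, Lemmas~\ref{lem:Tassion 3.1} and~\ref{l:cor13} give positive annular-circuit probability on a range of scales above $s$; condition~(6) then converts this into a high-probability circuit in a thick annulus, which is \emph{incompatible} with $\alpha_t$ remaining small for all $t$ in $[12s,C_1 s]$ (the circuit would force an $H$-crossing to hit the complementary boundary segment, contradicting the defining property of $\alpha_t$). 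This is Lemma~\ref{lem:Tassion 3.2}, the scale-propagation step, and it is the only place~(6) enters. Without it you cannot pass from one good scale to the next, and your ``iterating this bootstrap'' has no content.

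Relatedly, you treat condition~(1) as a regularity condition ensuring the landmark is well-defined. The first clause of~(1) does that, but the quantitative limit~\eqref{e:nondegenrsw} has a different and essential role: it shows that $\alpha_{Cs_n}/s_n$ is bounded below uniformly in $n$, which furnishes the \emph{seed} good scale near $s_n$ from which the propagation in Lemma~\ref{lem:Tassion 3.2} is launched (see Corollary~\ref{c:rsw} and the paragraph following it). Your proposal has no mechanism for producing this initial good scale.
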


Compared to the original setting in \cite{Tas16}, and also its application in \cite{BG16}, we have made two important modifications in the formulation of Theorem \ref{t:rsw}. First, since we are dealing with a {\em sequence} of random sets rather than a single random set, the conditions are all stated in a way that guarantees {\em uniform} control over all necessary quantities.

Second, we have formulated a general condition guaranteeing annular crossings with high probability (see condition (6)), rather than a working
under the more constraining assumption that the random sets in disjoint domains are asymptotically independent (as was done in \cite{Tas16} and \cite{BG16} for instance). This reformulation is useful because we want to work directly with the random fields defined on $\Xb$, rather than applying the general theorem to the \textit{discretised} version of the model, as was the approach in \cite{BG16}. We believe that this constitutes a significant simplification to the method, and could also be used to simplify the argument in \cite{BG16}.

\subsection{Summary of the remaining part of the paper}

The remainder of the paper is structured as follows. In section \ref{s:proof} we develop the perturbation analysis that is the crucial ingredient in applying the abstract Theorem \ref{s:rsw} to the setting of Gaussian random fields. We then combine this analysis with Theorem~\ref{t:rsw} to complete the proof of Theorem \ref{t:main}. We conclude the section by showing that Theorem \ref{t:kos} and Example \ref{ex:torus} fall within the scope of~Theorem \ref{t:main}.

In section \ref{s:rsw} we give the proof of the abstract Theorem~\ref{t:rsw}. This is similar to the argument in \cite{Tas16}, but with suitable modifications to adapt to our setting. Finally, in section \ref{s:pert} we complete the proof of the auxiliary results used in the perturbation analysis developed in section \ref{s:proof}.

%%%%
\smallskip
\section{Proof of Theorem \ref{t:kos}: RSW estimates for Kostlan ensemble}
\label{s:proof}

In this section we complete the proof of Theorem \ref{t:main}, which implies Theorem \ref{t:kos} as a special case. The main ingredient will be a perturbation analysis that allows us to apply the abstract RSW estimates in Theorem \ref{t:rsw} to the positive (resp.\ negative) excursion sets of the Gaussian random fields in our setting; these have similarities to the methods in \cite{BG16} and \cite{NS15}.

The set-up for the perturbation analysis is the following. Let $(f_n)_{n \in  \mathbb{N}}$ be a collection of centred  Gaussian random fields on $\mathbb{X}$ whose respective covariance kernels $\kappa_n$ are symmetric in the sense of Definition \ref{a:symmetry} and satisfy Assumption \ref{a:nondegen}. Let
\begin{equation}
\label{eq:Sn excursion def}
\mathcal{S}_n^+=\{x\in\Xb: f_n(x)>0\}  \quad \text{and} \quad \mathcal{S}_n^-=\{x\in\Xb: f_n(x) < 0\}
\end{equation}
denote the positive and negative excursion sets of $f_n$ respectively. Without loss of generality we may assume that $f_n$ are unit variance, since a normalisation does not affect $\mathcal{S}^+_n$ or $\mathcal{S}^-_n$. We also assume that there exists a sequence $s_n >0$ satisfying $s_n \to 0$ as $n \to \infty$ such that the covariance kernels $\kappa_n$ converge locally uniformly (in the sense of Definition \ref{def:loc unif scal}) near the origin on the scale $s_n$ along with their first four derivative; let $K_{\infty}$ be the limiting covariance kernel. Let $\delta_0 > 0$ be sufficiently small that this uniform convergence holds on the ball $B(\delta_0)$.

\subsection{Perturbation analysis}
\label{sec:pert analys}

Our perturbation analysis proceeds in two steps. First we argue that, outside an event of a small probability, crossing events for the positive excursion set are determined by the signs of a Gaussian random field on a (deterministic) set of points of finite cardinality. Second, we control the effect of perturbations of the field on the probability of crossing events by controlling their impact on the finite-dimensional law associated to the signs of the
random field on the finitely many points described above (which, up to an event of a small probability, determine the crossing probabilities).

\vspace{2mm}

To state the main propositions of the perturbation analysis, we shall need to define an analogue of Euclidean `polygons' for the manifold $\mathbb{X}$.

\begin{definition}
\label{d:poly}
A polygon is a quad whose boundary consists of a finite number of geodesic line-segments. Similarly to boxes, we refer to the boundary components as `sides', and their length as `side-lengths'. For each $X \subseteq \mathbb{X}$, $c > 0$ and $s > 0$, we denote by $\rm{Poly}_{X; c}(s)$ the collection of polygons in $X$ with at most $c$ sides and with sides-lengths at most $cs$.
\end{definition}

The main propositions of the perturbation analysis are the following.

\begin{proposition}[Crossing events are determined by the signs of finitely many points]
\label{p:meas}
For sufficiently large $n \in \mathbb{N}$ the following holds. Fix $c,r> 1$. Then there exists a constant
\[ c_1=c_1(c; r; K_\infty; \delta_0) > 0, \]
such that, for all error thresholds $\varepsilon \in (0,1)$, scales $s > 0$, and
\[ Q \in \rm{Poly}_{\mathbb{X};c}(s) \cup \rm{Ann}_{\mathbb{X};c;r}(s), \]
 there exists a finite set $\mathcal{P} = \Pc(Q;K_\infty;\delta_{0}) \subset Q$ of cardinality at most
\[ |\Pc| < c_1 \left( \varepsilon^{-2} (s/s_n)^{6} \vee 1 \right) , \]
such that, outside an event of probability less than $\varepsilon$, the crossing event $\mathcal{C}_Q(\mathcal{S}^+_n)$
is determined by the signs of $f_n$ restricted to $\mathcal{P}$.
\end{proposition}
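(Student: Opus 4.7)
The plan is to build $\mathcal{P}$ as a regular grid of spacing $\rho s_n$ inside $Q$, for a parameter $\rho>0$ to be chosen, and to argue that on a ``good event'' $G_n$ of probability at least $1-\varepsilon$ the sign pattern of $f_n$ on $\mathcal{P}$ recovers enough of the topology of the positive excursion set $\mathcal{S}_n^+\cap Q$ to determine the crossing event $\mathcal{C}_Q(\mathcal{S}_n^+)$. Since the grid cardinality is $|\mathcal{P}|\lesssim (s/s_n)^2/\rho^2$, the target bound $|\mathcal{P}|\lesssim\varepsilon^{-2}(s/s_n)^6$ forces the sub-microscopic scaling $\rho\asymp\varepsilon(s_n/s)^2$, which is significantly finer than the natural microscopic scale $\rho\asymp 1$.

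I decompose the good event as $G_n=E_1\cap E_2\cap E_3$, where $E_1=\{|f_n(p)|\ge\eta\text{ for every }p\in\mathcal{P}\}$ is a threshold event for a parameter $\eta>0$, $E_2=\{\sup_{x\in Q}|\nabla f_n(x)|\le M/s_n\}$ is a uniform Lipschitz bound, and $E_3=\{\nabla f_n(x)\ne 0\text{ whenever }|f_n(x)|<\eta\}$ rules out near-degenerate critical points. On $E_1\cap E_2$ and provided $\eta\gtrsim M\rho$, the sign of $f_n$ is constant inside each grid cell, so the sign of $f_n$ at every $x\in Q$ becomes a deterministic function of the signs on $\mathcal{P}$. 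On $E_3$, an implicit-function / Morse-theoretic argument shows that the level sets $\{f_n=t\}$ are all isotopic inside $Q$ for $|t|<\eta$, so the topology of $\mathcal{S}_n^+\cap Q$ is stable under $C^0$-perturbations of size $\eta$ and can be recovered from the combinatorial sign data on $\mathcal{P}$, yielding in particular the measurability of $\mathcal{C}_Q(\mathcal{S}_n^+)$ with respect to $\operatorname{sign}(f_n|_\mathcal{P})$.

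Reducing to $\P(G_n^c)\le\varepsilon$ then requires three ingredients. First, $\P(E_2^c)\le\varepsilon/3$ follows from standard Dudley--Fernique-type estimates for Gaussian gradient suprema, the relevant metric entropy being uniformly controlled thanks to the local uniform convergence of covariance kernels up to four derivatives. Second, $\P(E_3^c)\le\varepsilon/3$ follows from a Kac--Rice bound $\E[\#\{x\in Q:\nabla f_n(x)=0,\,|f_n(x)|<\eta\}]\lesssim\eta(s/s_n)^2$, where the non-degeneracy of $K_\infty$ and local convergence make the implicit densities uniformly bounded; Markov then gives the required estimate as soon as $\eta\lesssim\varepsilon(s_n/s)^2$. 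Third, $\P(E_1^c)\le\varepsilon/3$ is the delicate step: the naive union bound $|\mathcal{P}|\eta$ is useless, so instead one uses the Lipschitz bound on $E_2$ to confine the small-value set $\{|f_n|<\eta\}$ to a tubular neighborhood of the nodal set of width $\eta s_n/M$, and a Kac--Rice estimate on the expected length of the nodal set in $Q$ to bound the expected number of grid points falling inside this tube; Markov then yields the bound.

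The main obstacle is the last step, together with the delicate balancing of the three parameters $\rho$, $\eta$, $M$. The consistency of the sign within grid cells forces $\eta\gtrsim M\rho$, the near-critical Kac--Rice estimate forces $\eta\lesssim\varepsilon(s_n/s)^2$, and the tubular-geometry argument for $E_1$ imposes a further compatibility that ultimately pins $\rho$ to the same sub-microscopic scale as $\eta$. This interplay, mediated by the tubular geometry of the nodal set rather than by a crude union bound, produces the apparently large exponents in the stated cardinality bound. The extension from polygons $Q\in\operatorname{Poly}_{\mathbb{X};c}(s)$ to annuli $Q\in\operatorname{Ann}_{\mathbb{X};c;r}(s)$ is straightforward, since the bounded aspect ratio $r$ and the bounded number of boundary components keep the nodal-set geometry under the same control.
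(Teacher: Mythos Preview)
Your proposal has a genuine gap. The claim that on $E_1\cap E_2$ with $\eta\gtrsim M\rho$ the sign of $f_n$ is constant inside each grid cell is correct, but it forces $f_n\ne 0$ throughout $Q$: every point of $Q$ lies within $\rho s_n$ of a grid point $p$ where $|f_n(p)|\ge\eta$, and the Lipschitz bound then gives $|f_n|\ge\eta-M\rho>0$. Thus $E_1\cap E_2\subseteq\{\mathcal{N}_n\cap Q=\varnothing\}$, an event whose probability tends to zero when $s\gg s_n$, so $G_n$ cannot have probability $\ge 1-\varepsilon$. The estimate of $\P(E_1^c)$ inherits the same error in the opposite guise: a gradient \emph{upper} bound $|\nabla f_n|\le M/s_n$ shows that the $(\eta s_n/M)$-tube around the nodal set is \emph{contained in} $\{|f_n|<\eta\}$, not the reverse. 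Confining the small-value set to a thin tube would need a quantitative gradient \emph{lower} bound on $\{|f_n|<\eta\}$, which $E_3$ (mere nonvanishing of $\nabla f_n$) does not supply uniformly in $n$. The Morse-theoretic stability on $E_3$ also does not by itself reduce the crossing event to the grid \emph{signs}; it would let you replace $f_n$ by a $C^0$-close interpolant, but that interpolant depends on the \emph{values} of $f_n$ on $\mathcal{P}$.

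The paper's argument is shorter and sidesteps all of this. One takes a graph decomposition $\mathbb{G}$ of $Q$ by geodesic edges of length $\ell$, compatible with the boundary arcs $\gamma,\gamma'$, having $O((s/\ell)^2)$ vertices. A Kac--Rice bound on the second factorial moment of zeros along a segment (Lemma~\ref{l:tp}) gives $\P(\text{a fixed edge is hit by }\mathcal{N}_n\text{ at least twice})\lesssim(\ell/s_n)^3$. Union-bounding over the $O((s/\ell)^2)$ edges and choosing $\ell\asymp\varepsilon s_n^3/s^2$ makes the exceptional probability at most $\varepsilon$ and the vertex set of size $O(\varepsilon^{-2}(s/s_n)^6)$. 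On the good event every edge is crossed at most once, so whether an edge is crossed is read off from the signs of $f_n$ at its two endpoints, and the resulting ``crossed/uncrossed'' labelling of the edges determines $\mathcal{C}_Q(\mathcal{S}_n^+)$. No gradient suprema, Morse theory, or tubular geometry enter.
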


\begin{lemma}[Effect of perturbation on the signs of Gaussian vectors]
\label{l:comp}
Fix $\eta > 0$. Let $X$ and $Y$ be centred Gaussian vectors of dimension $n$ with respective covariance matrices $\Sigma_X$ and~$\Sigma_Y$, and let $\mathbb{P}_X$ and $\mathbb{P}_Y$ denote their respective laws. Suppose that $X$ is normalised to have unit variance, and define
\[ \delta = \max_{i,j \le n} | (\Sigma_X)_{i,j} - (\Sigma_Y)_{i,j} |  .\]
Then there exists a constant $c > 0$, depending only on $\eta$, such that, for all events $A$ that are measurable in $\mathbb{P}_X$ and $\mathbb{P}_Y$ w.r.t\ the signs of $X$ and $Y$ respectively, then the following hold:
\begin{enumerate}
\item If the diagonal entries of $\Sigma_Y - \Sigma_X$ are non-negative, then
\[ | \mathbb{P}_X(A ) - \mathbb{P}_Y(A) | < c \left( n^{3+\eta} \delta \right)^{1/4} .  \]
\item If in addition $\Sigma_Y - \Sigma_X$ is positive-definite, then
\[ | \mathbb{P}_X(A ) - \mathbb{P}_Y(A) | < c \left( n^{2 +\eta} \delta \right)^{1/4} .  \]
\end{enumerate}
\end{lemma}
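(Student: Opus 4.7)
Both parts follow the same high-level scheme: construct a coupling $(X, Y)$ of the two laws under which the sign patterns agree with high probability, then exploit the measurability of $A$ w.r.t.\ signs. For any coupling and any threshold $\epsilon > 0$,
\[
|\P_X(A) - \P_Y(A)| \leq \P(\operatorname{sgn}(X) \neq \operatorname{sgn}(Y)) \leq \P\bigl(\min_i |X_i| < \epsilon\bigr) + \P\bigl(\|X - Y\|_\infty \geq \epsilon\bigr).
\]
Since each $X_i$ is unit-variance Gaussian, $\P(|X_i|<\epsilon) \leq \sqrt{2/\pi}\,\epsilon$, so the union bound gives $\P(\min_i |X_i| < \epsilon) \leq Cn\epsilon$. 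Everything thus reduces to designing a coupling with small $\ell^\infty$ distance and optimizing in $\epsilon$.

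For part~(2), the natural coupling is $Y = X + Z$ with $Z \sim N(0, \Sigma_Y - \Sigma_X)$ independent of $X$, well-defined since $\Sigma_Y - \Sigma_X \succeq 0$. Each $Z_i$ is centred Gaussian of variance at most $\delta$, so a $p$-th moment estimate combined with the union bound yields $\P(\|Z\|_\infty \geq \epsilon) \leq c_p n (\sqrt{\delta}/\epsilon)^p$. Choosing $p$ sufficiently large depending on $\eta$ and then optimizing $\epsilon$ produces the claimed bound $c(n^{2+\eta}\delta)^{1/4}$ throughout the nontrivial regime $\delta \lesssim n^{-2-\eta}$.

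For part~(1), the zero-diagonal and indefinite nature of $\Delta := \Sigma_Y - \Sigma_X$ precludes this direct coupling. Instead we pass through an intermediate PSD step: spectrally decompose $\Delta = \Delta_+ - \Delta_-$ with $\Delta_\pm \succeq 0$. Since $\|\Delta\|_{\mathrm{op}} \leq \|\Delta\|_{\mathrm{F}} \leq n\delta$, both $\Delta_\pm$ have operator norm at most $n\delta$, and the PSD version of Cauchy--Schwarz applied to their diagonal entries gives $\max_{i,j}|(\Delta_\pm)_{i,j}| \leq n\delta$. Applying part~(2) to each of the two PSD perturbations $\Sigma_X \to \Sigma_X + \Delta_+$ and $\Sigma_Y \to \Sigma_X + \Delta_+$ and combining via the triangle inequality yields a bound of order $(n^{3+\eta}\delta)^{1/4}$, which already implies the claim except when $\delta$ is extremely small. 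For this residual regime we estimate $|\P_X(A) - \P_Y(A)|$ directly via Gaussian integration by parts along the interpolation $\Sigma_t = \Sigma_X + t\Delta$: distributional differentiation of a smoothed sign indicator yields a Plackett-type estimate $|dF/dt| \lesssim \sum_{i<j,\,\Delta_{i,j}\neq 0} |\Delta_{i,j}|/\sqrt{1-\rho_{i,j}(t)^2} \lesssim n^2 \delta$, which upon integration gives $|\P_X(A) - \P_Y(A)| \lesssim n^2\delta$, still below the claim whenever $\delta \lesssim n^{-1}$.

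\textbf{Main obstacle.} The delicate step is the Plackett estimate, in which the factor $1/\sqrt{1-\rho_{i,j}(t)^2}$ may blow up if some correlation approaches $\pm 1$ along the interpolation. We circumvent this by first regularizing $\Sigma_X$ to $\Sigma_X + \lambda^2 I$ (the cost of which is absorbed by part~(2)), which contracts all correlations by $(1+\lambda^2)^{-1}$ and keeps them uniformly bounded away from $\pm 1$. An optimal choice of $\lambda$ balances the regularization cost against the $1/\lambda$-blowup in the Plackett bound, producing the final estimate.
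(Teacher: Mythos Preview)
Your argument for part~(2) is correct and is essentially the paper's own proof: the same additive coupling $Y=X+Z$ with $Z\sim N(0,\Sigma_Y-\Sigma_X)$ independent of $X$, followed by the decomposition into $\{\min_i|X_i|<\varepsilon\}$ and $\{\|Z\|_\infty>\varepsilon\}$ and an optimisation in $\varepsilon$. The paper bounds $\P(\|Z\|_\infty>\varepsilon)$ via the sub-Gaussian maximum estimate $\E\|Z\|_\infty\lesssim\sqrt{\delta\log n}$ plus Markov, whereas you use a $p$-th moment bound and take $p$ large depending on $\eta$; both routes land on the same final estimate up to the harmless $n^\eta$ slack.

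For part~(1) the paper does not give a proof at all but simply cites \cite[Proposition~C.1]{BM}, so your argument is necessarily different from what is written here. Your two-stage scheme is sound. The spectral decomposition step is clean: $\|\Delta_\pm\|_{\max}\le\|\Delta_\pm\|_{\mathrm{op}}\le\|\Delta\|_F\le n\delta$, and since both $\Sigma_X$ and $\Sigma_Y$ have unit diagonal (the anti-block-diagonal hypothesis forces $\Delta_{ii}=0$) you may apply part~(2) in either direction around the intermediate matrix $\Sigma_X+\Delta_+$, giving $(n^{3+\eta}\delta)^{1/4}$. The residual regime is then handled by Plackett interpolation, and your diagnosis of the obstacle is exactly right: the factor $1/\sqrt{1-\rho_{ij}(t)^2}$ is only controlled after the Tikhonov regularisation $\Sigma\mapsto\Sigma+\lambda^2 I$, which forces $|\rho_{ij}(t)|\le 1/(1+\lambda^2)$ uniformly along the path (since $\Sigma_t$ is a genuine covariance with unit diagonal, so $|(\Sigma_t)_{ij}|\le 1$ automatically). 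Balancing the regularisation cost $(n^{2+\eta}\lambda^2)^{1/4}$ against the Plackett term $n^2\delta/\lambda$ gives an error of order $n^{1+\eta/6}\delta^{1/3}$, which in fact already beats the target $(n^{4+\eta}\delta)^{1/3}$ on its own; your route via $\Delta_\pm$ is therefore not strictly necessary, though it does no harm. One small point worth making explicit when you write this out in full: the Plackett-type derivative bound for a \emph{union} of orthants (rather than a single orthant) still carries only an absolute constant, because for each pair $(i,j)$ the signed contributions from the four sign-choices of $(x_i,x_j)$ combine to something bounded by twice the bivariate density at the origin.
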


The first statement of Lemma \ref{l:comp} is an improved version of \cite[Theorem 4.3]{BG16} and \cite[Proposition C.1]{BM}, implementing an idea from \cite{NS11}.

We stress that in Proposition \ref{p:meas}, once $K_\infty$ and $\delta_{0}$ are prescribed, neither the constant~$c_1$ nor the set $\mathcal{P}$, whose existence is established in Proposition \ref{p:meas}, depend on any other properties of $\kappa_{n}$. Hence we may choose a set $\mathcal{P}$ that works simultaneously for two different sequences of fields whose covariance kernels converge locally uniformly to $K_\infty$ on $B(\delta_0)$; this fact will be crucial in section \ref{s:macro} below.

\vspace{0.2cm}

The proof of Proposition \ref{p:meas} and Lemma \ref{l:comp} are given in section \ref{s:pert}. We mention here that the proof of Proposition \ref{p:meas} proceeds by controlling the event that the nodal set intersects any of the edges of a certain \textit{graph} more than once (see Lemma \ref{l:tp}). We then argue that, outside this event, all crossing events are determined by the signs of the field restricted to the vertices of the graph. An analogous result for Gaussian random fields on~$\mathbb{R}^2$ was established in \cite{BG16, BM}.
We now outline the two key consequences of the perturbation analysis in our setting.

\subsubsection{Asymptotic independence of crossing events}

The first consequence is that crossing events in disjoint polygons or annuli are asymptotically independent in
the limit $n\rightarrow\infty$, as long as their respective polygons or annuli are sufficiently well-separated; this follows
in particular from the condition \eqref{eq:rapid decay corr} of Theorem \ref{t:main}.

\begin{proposition}
\label{p:qi}
Suppose that there exists $X \subseteq \mathbb{X}$ and $\eta > 0$ such that \eqref{eq:rapid decay corr} holds. Then for each $c , r, k> 1$ and $\varepsilon > 0$ there is a $C > 0$ such that the following hold for all sufficiently large $n \in \mathbb{N}$:
\begin{equation}
\label{eq:cross prob asymp ind}
 \sup_{ s > C s_n }  \, \sup_{ \substack{ X_1, X_2 \subset X , \\ d(X_1, X_2) > s} } \, \sup_{ \substack{P_1 \in \rm{Poly}_{X_1;c}(s) , \\ P_2 \in \rm{Poly}_{X_2;c}(s) } } \left| \mathbb{P} \left(\mathcal{C}_{P_1}(\mathcal{S}^+_n) \cap \mathcal{C}_{P_2}(\mathcal{S}^-_n) \right) - \mathbb{P}(\mathcal{C}_{P_1}(\mathcal{S}^+_n))\cdot  \mathbb{P}( \mathcal{C}_{P_2}(\mathcal{S}^-_n) )  \right| < \varepsilon,
\end{equation}
and, for each $1 \le j \le k$,
\begin{align}
\label{eq:cross prob asymp ind2}
 & \sup_{ s > C s_n }  \, \sup_{ \substack{ X_1, X_2 \subset X , \\ d(X_1, X_2) > s} } \, \sup_{ \substack{ \{A_i\}_{0 \le i \le j-1} \subset \rm{Ann}_{X_1;c;r}(s) , \\ A_{j} \in \rm{Ann}_{X_2;c;r}(s) } }  \\
 & \nonumber \phantom{aaaaaaaaaaaaa}  \left| \mathbb{P} \big( \cap_{0 \le i \le j} \mathcal{C}_{A_i}^c(\mathcal{S}^+_n)  \big) - \mathbb{P}(\cap_{0 \le i \le j-1} \mathcal{C}^c_{A_i}(\mathcal{S}^+_n))\cdot  \mathbb{P}( \mathcal{C}^c_{A_j}(\mathcal{S}^+_n) )  \right| < \varepsilon.
\end{align}
\end{proposition}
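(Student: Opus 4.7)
My plan is to reduce each crossing (or non-crossing) event to a cylinder event on a finite point set via Proposition \ref{p:meas}, and then to compare the true Gaussian law on these points with a ``decoupled'' law (in which the cross-correlations between points in $X_1$ and points in $X_2$ are zeroed out) using Lemma \ref{l:comp}. Under the decoupled law, the sub-events supported in $X_1$ become independent of those supported in $X_2$, so up to two controllable errors the joint probability factorises as required.

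For \eqref{eq:cross prob asymp ind}, fix $\varepsilon$ and apply Proposition \ref{p:meas} both to $P_1$ (for the field $f_n$) and to $P_2$ (for the field $-f_n$, which has the same law as $f_n$, so that $\Cc_{P_2}(\Sc^-_n)=\Cc_{P_2}(\{-f_n>0\})$ is sign-determined on a finite set $\Pc_2\subset P_2$). This yields finite sets $\Pc_1,\Pc_2$ of cardinality at most $c_1(\varepsilon^{-2}(s/s_n)^6\vee 1)$ and cylinder events $A_1,A_2$ with $\P(\Cc_{P_i}(\Sc^\pm_n)\triangle A_i)<\varepsilon$. Let $X=(f_n(p))_{p\in\Pc_1\cup\Pc_2}$ and let $Y$ be obtained from $X$ by zeroing the $\Pc_1$-$\Pc_2$ off-diagonal blocks of the covariance, so that $A_1$ and $A_2$ are $\P_Y$-independent with the same marginals as under $\P_X$. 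The matrix $\Sigma_Y-\Sigma_X$ is anti-block-diagonal with entries bounded by $\delta=\sup_{x\in X_1,\,y\in X_2,\,d(x,y)\geq s}|\kappa_n(x,y)|$; by \eqref{eq:rapid decay corr}, given any $\delta_1>0$ we may pick $C$ large so that $\delta\leq\delta_1(s/s_n)^{-(24+\eta)}$ uniformly for $s>Cs_n$ and $n$ large. Applying Lemma \ref{l:comp}(1) with auxiliary exponent $\eta'<\eta/6$ (e.g.\ $\eta'=\eta/12$) bounds the comparison error by a constant times $(n^{4+\eta'}\delta)^{1/3}$, and
\[
n^{4+\eta'}\delta \;\leq\; C'\,\varepsilon^{-(8+2\eta')}\,(s/s_n)^{6\eta'-\eta}\,\delta_1 \;\leq\; C'\,\varepsilon^{-(8+2\eta')}\,C^{-\eta/2}\,\delta_1,
\]
which can be driven below $\varepsilon^3$ by further enlarging $C$, proving \eqref{eq:cross prob asymp ind}. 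The second estimate \eqref{eq:cross prob asymp ind2} follows along identical lines: Proposition \ref{p:meas} reduces $\cap_{i<j}\Cc_{A_i}^c(\Sc^+_n)$ and $\Cc_{A_j}^c(\Sc^+_n)$ to cylinder events on point sets $\Pc_0\cup\cdots\cup\Pc_{j-1}\subset X_1$ and $\Pc_j\subset X_2$ respectively, and the same two-block decoupling applies. Since $j\leq k$ is bounded, the total dimension is still $O((s/s_n)^6)$ with an implicit constant depending on $k$, and the rest of the argument is unchanged.

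The main technical point, and the reason \eqref{eq:rapid decay corr} is stated with exponent $24+\eta$, is the balance between the polynomial growth $n^{4+\eta'}\sim(s/s_n)^{6(4+\eta')}$ from the dimension of the Gaussian vector in Lemma \ref{l:comp}(1) and the polynomial decay $(s/s_n)^{-(24+\eta)}$ of the off-block entries from \eqref{eq:rapid decay corr}. The strict inequality $6(4+\eta')<24+\eta$ forces $\eta'<\eta/6$, and any such choice produces a $C^{-\kappa}$ factor with $\kappa=\eta-6\eta'>0$ that drives the error to zero as $C\to\infty$, uniformly over $s>Cs_n$, over the choice of polygons or annuli in the supremum, and over $n$ sufficiently large.
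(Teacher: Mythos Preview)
Your proof is correct and follows essentially the same route as the paper: discretise each crossing event onto a finite point set via Proposition~\ref{p:meas}, then compare the joint law with the block-decoupled law using Lemma~\ref{l:comp}(1), exploiting that the difference of covariances is anti-block-diagonal with entries controlled by \eqref{eq:rapid decay corr}. The only cosmetic difference is in the bookkeeping of exponents: the paper applies Lemma~\ref{l:comp} with the auxiliary exponent equal to $\eta/6$, so that the powers of $(s/s_n)$ cancel exactly and the smallness is carried entirely by choosing $C$ so that the supremum in \eqref{eq:rapid decay corr} falls below $\varepsilon^{11+\eta/3}$; you instead take a strictly smaller auxiliary exponent $\eta'<\eta/6$ and extract a residual factor $(s/s_n)^{6\eta'-\eta}\le C^{-(\eta-6\eta')}$ to drive the error to zero. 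Both arrangements yield the same bound $O(\varepsilon)$ after adding the discretisation errors, and your treatment of \eqref{eq:cross prob asymp ind2} (bounding the total point-count by $(k{+}1)c_1(\varepsilon^{-2}(s/s_n)^6\vee1)$ and running the identical two-block comparison) matches the paper's remark that the proof of \eqref{eq:cross prob asymp ind2} is ``essentially identical''.
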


Observe that while \eqref{eq:cross prob asymp ind2} is stated for the positive excursion sets (and for the complements of the events $\mathcal{C}_{A_i}$), \eqref{eq:cross prob asymp ind} is formulated to control the asymptotic independence \textit{between} crossing events $\mathcal{C}_{P_i}$ for the positive and negative excursion sets. This difference is solely due to how we intend to apply these results, and does not reflect limitations in their generality.
Before giving a proof for Proposition \ref{p:qi}, let us state and prove a crucial corollary of \eqref{eq:cross prob asymp ind2}, namely that condition \eqref{eq:rapid decay corr} of Theorem \ref{t:main} implies the `thick' annular crossing condition (6) of Theorem~\ref{t:rsw}.

\begin{corollary}
\label{c:qi}
Suppose that there exist $X \subseteq \mathbb{X}$ and $\eta > 0$ such that \eqref{eq:rapid decay corr} holds. Then for each  $c > 0$, $r > 1$ and $\varepsilon \in (0, 1)$, there exists $C_1, C_2 > 1$ such that, for all sufficiently large $n \in \mathbb{N}$ and all $s > C_1 s_n$, if
\[   \inf_{A \in \rm{Ann}_{X; C_2; r}(s)}  \P(\Cc_A(\Sc_n^+)  ) >c  \]
then, for every $s \times C_2s$ annulus $A \subseteq X$,
\[ \P( \Cc_A(\Sc_n^+))>1-\varepsilon. \]
\end{corollary}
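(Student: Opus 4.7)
The plan is to amplify the lower bound $c$ on crossing probability of thin annuli into a high probability $1-\varepsilon$ for the thick annulus by packing many disjoint, well-separated concentric sub-annuli of aspect ratio $r$ inside $A$ and applying the asymptotic independence estimate \eqref{eq:cross prob asymp ind2} iteratively. Each sub-annulus has crossing probability $\ge c$ by the hypothesis, a circuit in any sub-annulus is a circuit in $A$, and approximate independence lets us multiply the failure probabilities up to a controllable error.

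\textbf{Geometric setup.} Given $c, r, \varepsilon$ from the statement, I would first choose $k \in \mathbb{N}$ so that $(1-c)^k < \varepsilon / 2$, and fix a growth factor $\rho > r$ large enough that, for the scales $a_j := \rho^j s$ with $0 \le j \le k-1$, the separation $(a_{j+1} - r a_j)/2$ strictly exceeds $s$ for every $j$. Set $C_2 := r \rho^{k-1}$. For a generic thick $s \times C_2 s$ annulus $A \subset X$, define $k$ concentric sub-annuli $A_0, \ldots, A_{k-1}$ sharing the center of $A$, with $A_j$ of aspect ratio $r$, inner side $a_j$, and outer side $r a_j$. By construction each $A_j \in \rm{Ann}_{X; C_2; r}(s)$ (so the hypothesis gives $\P(\Cc_{A_j}(\Sc_n^+)) > c$), each $A_j$ sits inside the ring of $A$ concentrically, so that $\Cc_{A_j}(\Sc_n^+) \subseteq \Cc_A(\Sc_n^+)$, and consecutive sub-annuli are separated by more than $s$ in the intrinsic metric.

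\textbf{Iteration.} Set $\varepsilon' := \varepsilon/(2k)$ and invoke Proposition \ref{p:qi} with parameters $(C_2, r, k, \varepsilon')$ to obtain a threshold $C > 0$; let $C_1 := C$. For each $1 \le j \le k-1$, choose $X_1$ to be a thin closed neighborhood of $A_0 \cup \cdots \cup A_{j-1}$ contained in a square of radius slightly larger than $r a_{j-1}/2$, and $X_2$ a thin closed neighborhood of $A_j$ lying outside a square of radius slightly less than $a_j/2$, ensuring $d(X_1, X_2) > s$. Writing $P_j := \P\bigl(\bigcap_{i=0}^{j} \Cc^c_{A_i}(\Sc_n^+)\bigr)$ and $Q_j := \P(\Cc^c_{A_j}(\Sc_n^+)) \le 1-c$, estimate \eqref{eq:cross prob asymp ind2} gives
\[ |P_j - P_{j-1} Q_j| < \varepsilon' \qquad (1 \le j \le k-1). \]
A straightforward telescoping induction yields $P_{k-1} \le \prod_{i=0}^{k-1} Q_i + (k-1)\varepsilon' \le (1-c)^k + \varepsilon/2 < \varepsilon$, and then $\P(\Cc_A(\Sc_n^+)) \ge 1 - P_{k-1} > 1 - \varepsilon$, which is the desired conclusion.

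\textbf{Main obstacle.} The only real difficulty is the bookkeeping: the constants must be chosen in the correct order ($k$ first, then $\varepsilon'$, then $C_2$, and finally $C_1$ from Proposition \ref{p:qi}) so that the geometric separation condition $d(X_1, X_2) > s$ and the scale constraint $A_j \in \rm{Ann}_{X; C_2; r}(s)$ are simultaneously met for every iteration step. The concentric placement is transparent on $\mathbb{T}^2$ with axis-aligned squares, but on $\mathbb{S}^2$ one must briefly verify that, for all admissible scales $s$, the sub-annular configuration is realisable with the required separations in the spherical metric; this is routine since $s_n \to 0$ and curvature effects are uniformly controlled once $C_1 s_n$ is small.
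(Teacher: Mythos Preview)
Your proposal is correct and follows essentially the same approach as the paper: nest $k$ disjoint concentric sub-annuli of aspect ratio $r$ inside the thick annulus, use the hypothesis to lower-bound each crossing probability by $c$, and iterate the asymptotic-independence estimate \eqref{eq:cross prob asymp ind2} to show that the probability all $k$ circuits fail is small. The only cosmetic differences are that the paper first reduces to $r\ge 2$ and uses the fixed geometric spacing $A^s_i=A_{r^{2i}s,\,r^{2i+1}s}$, and it iterates the affine map $x\mapsto (1-c)x+\varepsilon/(2c)$ with $k$ chosen at the end, whereas you fix $k$ upfront, take the per-step error $\varepsilon/(2k)$, and telescope additively; both bookkeeping schemes lead to the same conclusion.
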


\begin{proof}[Proof of Corollary \ref{c:qi} assuming Proposition \ref{p:qi}]
The idea of the proof is straightforward. If we take a large number of concentric well-separated annuli, then crossing events in these annuli are almost independent and have the same lower bound. This implies a crossing in one of them with high probability, and hence a crossing in a `thick' annulus with high probability.

Fix $c > 0$, $r > 1$ and $\varepsilon \in (0, 1)$. Since establishing the corollary for a $r > 1$ implies the corollary holds for every smaller $\bar r \in (1, r)$, we can and will assume that $r \ge 2$. We work with the collection $(A_{a, b})_{a<b}$ of $a \times b$ annuli centred at the origin that are `parallel', i.e.\ such that there is a single geodesic that intersects all boundary squares at the mid-points of opposite sides. In particular, for each $s > 0$ we introduce the sequence of disjoint annuli $\{A^s_i\}_{i \ge 0}$ defined by $A^s_i = A_{r^{2i}  s , r^{2i+1} s}$. Since $r \ge 2$ it holds that $d(A_{i}^{s}, A_{j}^{s}) > s$ for all $i \neq j$.

Let $k$ be an integer to be determined later, and set $C_2$ larger than $r^{2k+1}$. Fix $s > 0$ and consider an $cs \times C_2s$ annulus $A \subseteq X$. By symmetry we may assume $A = A_{s, C_2s}$, and hence $A^s_i \in \rm{Ann}_{X; C_2; r}(s)$ for each $0 \le i \le k$, which by assumption implies that
\begin{equation}
\label{e:qi1}
 \P( \Cc_{A^s_i}(S_n^+))> c .
 \end{equation}

Now, since $d(X_i, X_j) > s$, an application of \eqref{eq:cross prob asymp ind2} in Proposition \ref{p:qi} yields a $C_1 > 0$ such that, for sufficiently large $n$ and all $s > C_1 s_n$ and $j = 0 , \ldots , k$,
\[   \left|\Pb \left(  \cap_{i = 1, \ldots, j}  \, \Cc_{A^s_i}^c (\Sc_{n}^{+})  \right)
-  \Pb(\cap_{i = 0, \ldots, j-1}  \Cc_{A^s_i}^c (\Sc_{n}^{+}) ) \cdot \Pb \big(\Cc_{A^s_j}^c (\Sc_{n}^{+}) \big)   \right|  < \varepsilon / (2c) . \]
Combined with \eqref{e:qi1} this implies that
\[   \Pb(  \Cc_{A^s_i  }(\Sc_n^+) \text{ does not occur for } i =  0, \ldots, k )  <  f_{c; \varepsilon}^k(1-c)  ,\]
where $f^k_{c;\varepsilon}(x)$ denotes the $k$-fold iteration of the map $x \mapsto (1-c)x + \varepsilon / (2c)$.
One may check that $f^k_{c;\varepsilon}(1-c) \to \varepsilon/2$ as $k \to \infty$, and hence we may choose a $k$ sufficiently large such that
\[   \Pb(  \Cc_{A^s_i  }(\Sc_n^+) \text{ does not occur for } i =  0, \ldots, k )  < \varepsilon .\]
Since the occurrence of any one of $\Cc_{A^s_i}(\Sc_n^+)$, $i = 0, \ldots ,k,$ implies the occurrence of $\Cc_{A}(\Sc_n^+)$, we have the corollary.
\end{proof}

\begin{proof}[Proof of Proposition \ref{p:qi}]
In what follows we prove \eqref{eq:cross prob asymp ind}; the proof of \eqref{eq:cross prob asymp ind2} is essentially identical. Fix $c > 1$ and $\varepsilon > 0$ and take $C$ and $n$ sufficiently large that the conclusion of Proposition~\ref{p:meas} holds, and
\begin{equation}
\label{eq:cross prob asymp ind3}
  \sup_{ \substack{ x, y \in X , \\ d(x, y) >  C s } } \left(d(x,y)s_{n}^{-1}\right)^{18 + \eta}    | \kappa_n(x, y) | <  \varepsilon^{10 + \eta/3} ;
  \end{equation}
this latter is possible by \eqref{eq:rapid decay corr}.

Now let $s > C s_n$, subsets $X_1, X_2 \subset X $ such that $d(X_1, X_2) > s$, and polygons $P_1 \in \rm{Poly}_{X_1;c}(s)$ and $P_2 \in \rm{Poly}_{X_2;c}(s)$ be given. By Proposition~\ref{p:meas}, there exists a number $c_1 > 0$, independent of $\varepsilon, s, P_1$ and $P_2$, such that the events $\mathcal{C}_{P_1}(\mathcal{S}^+_n)$ and $\mathcal{C}_{P_2}(\mathcal{S}^-_n)$ are determined, outside an event of probability less than $\varepsilon$, by the signs of sets $\mathcal{P}_1 \subset X_1$ and $\mathcal{P}_2 \subset X_2$ respectively, each of cardinality at most
\begin{equation*}
|\mathcal{P}_1|,\, |\mathcal{P}_2| < c_1 \varepsilon^{-2} (s/s_n)^{6} .
\end{equation*}
Applying the first statement of Lemma \ref{l:comp} to compare between the joint law on one hand and the product laws on the other hand for the field restricted on $\mathcal{P}_1 \cup \mathcal{P}_2$, we have, for some constant $c_2 > 0$ independent of $\varepsilon, s, P_1$ and $P_2$,
\begin{equation*}
\label{eq:diff prob prod small}
\begin{split}
&   \left| \mathbb{P} \left(\mathcal{C}_{P_1}(\mathcal{S}^+_n) \cap \mathcal{C}_{P_2}(\mathcal{S}^-_n) \right) - \mathbb{P}(\mathcal{C}_{P_1}(\mathcal{S}^+_n))\cdot \mathbb{P}( \mathcal{C}_2(\mathcal{S}^-_n) )  \right|  \\
&  \qquad  < \varepsilon +  c_2  \bigg( \varepsilon^{-6 - \eta/3}  (s/s_n)^{18 + \eta} \sup_{ \substack{ x, y \in X , \\ d(x, y) >  s } } | \kappa_n(x, y) | \bigg)^{1/4}  \\
&   \qquad < \varepsilon +  c_2  \bigg( \varepsilon^{-6 - \eta/3}   \sup_{ \substack{ x, y \in X , \\ d(x, y) >  C s_n } } (d(x, y) s_n^{-1})^{18 + \eta} | \kappa_n(x, y) | \bigg)^{1/4}  \\
& \qquad < \varepsilon + c_2 \varepsilon .
\end{split}
\end{equation*}
where in the last line we used \eqref{eq:cross prob asymp ind3}. Since $\varepsilon > 0$ was arbitrary, we conclude the proof.
\end{proof}

\subsubsection{Perturbation on macroscopic scales}
\label{s:macro}

The second consequence of the perturbation analysis is controlling the perturbations on macroscopic scales, the key step in handling asymptotically negligible negative correlations.

\begin{proposition}
\label{p:neg}
Let $\eta > 0$ and fix a sequence $p_n > 0$ of positive numbers satisfying
\begin{equation}
\label{eq:cn decay}
\lim_{n \to \infty}  p_n s_n^{-12 - \eta}    = 0.
\end{equation}
Define the sequence of centred Gaussian random fields $(\tilde{f}_n)_{n \in \mathbb{N}}$ on $\mathbb{X}$ with respective covariance kernels
\[ \tilde{\kappa}_n = \kappa_n + p_n ;\]
this is a valid covariance kernel since the constant function is positive-definite. Let $\tilde{\mathcal{S}}^+_n$ denote the positive excursion set of $\tilde{f}_n$. Then for every $c > 0$,
\[   \lim_{n \to \infty} \,  \sup_{s >0 } \, \sup_{P \in \rm{Poly}_{\mathbb{X};c}(s)}  | \mathbb{P}(\mathcal{C}_P(\mathcal{S}^+_n)) - \mathbb{P}(\mathcal{C}_P(\tilde{\mathcal{S}}^+_n)) |    = 0 . \]
\end{proposition}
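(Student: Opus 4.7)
The plan is to couple $\tilde f_n$ to $f_n$ via a single, spatially constant additive Gaussian perturbation, and then invoke Proposition~\ref{p:meas} to reduce the comparison of crossing probabilities to a comparison of sign patterns on a finite set of points. Since $\tilde\kappa_n = \kappa_n + p_n$, I may realise $\tilde f_n$, in distribution, as $f_n + Z_n$ where $Z_n \sim N(0, p_n)$ is independent of $f_n$; this reframes the problem as controlling the effect of a single scalar perturbation of typical magnitude $\sqrt{p_n}$ on the crossing events. The normalisation $\hat f_n := \tilde f_n / \sqrt{1 + p_n}$ has unit variance and the same positive excursion set as $\tilde f_n$, and its covariance $\hat\kappa_n = (\kappa_n + p_n)/(1 + p_n)$ inherits from $\kappa_n$ the symmetry of Definition~\ref{a:symmetry} and the smoothness and non-degeneracy of Assumption~\ref{a:nondegen}. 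Since $p_n \to 0$, the scaled covariance of $\hat f_n$ converges locally uniformly on the scale $s_n$ to the same limit $K_\infty$, along with its first four derivatives. Proposition~\ref{p:meas} therefore applies to both $f_n$ and $\hat f_n$, and, crucially, the remark following that proposition allows us to choose a single finite test set $\mathcal{P}$ working simultaneously for both.

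For the main comparison, fix $\varepsilon \in (0, 1)$ and $c > 0$ (enlarged if necessary so Proposition~\ref{p:meas} applies). Since $\mathbb{X}$ is compact, every polygon $P$ in $\mathbb{X}$ with at most $c$ sides lies in $\rm{Poly}_{\mathbb{X};c}(s')$ for some $s' \le \rm{diam}(\mathbb{X})/c$, and so without loss of generality I may apply Proposition~\ref{p:meas} with such an $s'$; this provides a set $\mathcal{P} \subseteq P$ with
\[
|\mathcal{P}| \le c_1 \bigl( \varepsilon^{-2} (s'/s_n)^6 \vee 1 \bigr) \le C_{\varepsilon, c} \, s_n^{-6}
\]
such that, outside an event of probability at most $2 \varepsilon$, both $\mathcal{C}_P(\Sc_n^+)$ and $\mathcal{C}_P(\tilde\Sc_n^+)$ are determined by the signs of $f_n$ and $\tilde f_n$ on $\mathcal{P}$. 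Under the coupling, the two sign patterns agree at a given $p \in \mathcal{P}$ unless $|f_n(p)| \le |Z_n|$; since $f_n(p) \sim N(0, 1)$ is independent of $Z_n \sim N(0, p_n)$, conditioning on $Z_n$ and bounding the standard Gaussian density by $1/\sqrt{2\pi}$ yields
\[
\Pb \bigl( |f_n(p)| \le |Z_n| \bigr) \le \sqrt{2/\pi} \, \E[|Z_n|] = (2/\pi) \sqrt{p_n}.
\]

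A union bound over $\mathcal{P}$ then gives, uniformly in $s$ and $P$,
\[
\bigl| \Pb(\mathcal{C}_P(\Sc_n^+)) - \Pb(\mathcal{C}_P(\tilde\Sc_n^+)) \bigr| \le 2\varepsilon + (2/\pi) |\mathcal{P}| \sqrt{p_n} \le 2\varepsilon + C'_{\varepsilon, c} \sqrt{p_n s_n^{-12}} .
\]
The hypothesis $p_n s_n^{-12 - \eta} \to 0$, combined with $s_n \to 0$, forces $p_n s_n^{-12} \to 0$, so the second term vanishes as $n \to \infty$; since $\varepsilon > 0$ is arbitrary, the proposition follows. The main obstacle is securing the uniform bound on $|\mathcal{P}|$: one needs a single finite set of test points, valid for both $f_n$ and $\tilde f_n$, whose cardinality grows only polynomially in $s_n^{-1}$, so that the union-bound error $|\mathcal{P}| \sqrt{p_n}$ is beaten by the smallness of $p_n$ prescribed by the hypothesis. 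This is exactly the purpose of the remark after Proposition~\ref{p:meas}, which isolates the dependence of $c_1$ and $\mathcal{P}$ on the local limit $K_\infty$ and the radius $\delta_0$ alone.
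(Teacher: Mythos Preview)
Your proof is correct and follows the paper's strategy: reduce via Proposition~\ref{p:meas} to sign patterns on a common finite set $\mathcal{P}$ of size $O_\varepsilon(s_n^{-6})$ (using the remark after that proposition to take the same $\mathcal{P}$ for both fields), then compare the sign patterns. The only difference is in the comparison step. The paper cites the second statement of Lemma~\ref{l:comp}, applied with $\delta = p_n$ and dimension $|\mathcal{P}|$, which yields an error of order $(|\mathcal{P}|^{2+\eta'} p_n)^{1/4}$; you instead realise the coupling $\tilde f_n = f_n + Z_n$ explicitly, bound the sign-flip probability at each point of $\mathcal{P}$ by $(2/\pi)\sqrt{p_n}$, and take a union bound to obtain $|\mathcal{P}|\sqrt{p_n}$. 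Your route is slightly more elementary---it is essentially the proof of Lemma~\ref{l:comp}(2) specialised to this rank-one perturbation, exploiting that $Z_n$ is a single scalar rather than a general vector---and as a byproduct it shows that the $\eta$ in the hypothesis~\eqref{eq:cn decay} is not actually needed for this proposition: $p_n s_n^{-12} \to 0$ already suffices.
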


\begin{proof}
Fix $c > 0$ and $\varepsilon > 0$, and take $n$ sufficiently large that the conclusion of Proposition~\ref{p:meas} holds, and
\begin{equation}
\label{eq:cn decay2}
 s_n^{-12 - \eta} p_n < \varepsilon^{8 + \eta/3},
 \end{equation}
possible by \eqref{eq:cn decay}.
Now let $s > 0$ and $P \in \rm{Poly}_{\mathbb{X}; c}(s)$ be given. Observe that the sequence of covariance kernels $\tilde{\kappa}_n$ also converge locally uniformly on $B(\delta_0)$, along with their first four derivatives, to the same limit $K_\infty$. By Proposition \ref{p:meas}, there exists a number $c_1 > 0$, independent of $\varepsilon, s$ and $P$, such that the events $\mathcal{C}_P(\mathcal{S}^+_n)$ and $\mathcal{C}_P(\tilde{\mathcal{S}}^+_n)$ are determined, outside an event of probability less than $\varepsilon$, by the signs of a set $\mathcal{P} \subseteq P$ of cardinality at most
\[ c_1 \varepsilon^{-2} s_n^{-6} ; \]
for this recall that $\mathcal{P}$ can be chosen to be the same set for all $\kappa_n$ that converge locally uniformly on $B(\delta_0)$ to the same limit $K_\infty$ (see the comments after the statement of Proposition~\ref{p:meas}). Applying the second statement of Lemma \ref{l:comp} to the law on $\mathcal{P}$ of the fields $f_n$ and $\tilde{f}_n$ respectively, for some constant $c_2 > 0$ independent of $\varepsilon, s$ and $P$
\begin{align*}
 \left| \mathbb{P} \left(\mathcal{C}_P(\mathcal{S}^+_n) \right) - \mathbb{P}(\mathcal{C}_P(\tilde{\mathcal{S}}^+_n))   \right|   < \varepsilon +  c_2  \bigg( \varepsilon^{-4 - \eta/3} s_n^{-12 - \eta} p_n  \bigg)^{1/4}  < \varepsilon +  c_2 \varepsilon ,
\end{align*}
where to obtain the last inequality we used \eqref{eq:cn decay2}. Since $\varepsilon > 0$ was arbitrary, we conclude the proof.
\end{proof}

\subsection{Concluding the proof of Theorem \ref{t:main}}
\label{sec:main thm concl}

We are now almost ready to conclude the proof of Theorem \ref{t:main}. Before we begin, we state some simple geometric lemmas and show how to verify the `microscopic' conditions (1) and (5) of Theorem \ref{t:rsw} in the setting of Theorem~\ref{t:main}.

We work in the same set-up as for the perturbation analysis given at the beginning of section \ref{s:proof}. Recall that $\mathcal{S}_n^+$ and $\mathcal{S}_n^-$ denote, respectively, the positive and negative excursion sets of $f_n$; we denote by $\mathcal{N}_n$ the nodal set of $f_n$.

\subsubsection{Geometric lemmas}

In the proof of Theorem \ref{t:main} we shall need the following. Recall the definition of polygons in Definition \ref{d:poly}.

\begin{lemma}
\label{l:geom}
Fix $X \subseteq \mathbb{X}$ and $c > 0$. Then there exists a number $c_1 > 0$ such that for each $s > 0$ and quad $Q\in \rm{Unif}_{X;c}(s)$ the following hold:
\begin{enumerate}
 \item There exists a polygon $P \in \rm{Poly}_{X;c_1}(s) \cap \rm{Unif}_{X;c}(s)$ such that the event $\Cc_P(\Sc^+_n)$ implies the event $\Cc_Q(\Sc^+_n)$.
 \item There exist disjoint domains $X_1, X_2 \subset X$ satisfying $d(X_1, X_2) > s/c_1$ and polygons $P_1 \in \rm{Poly}_{X_1;c_1}(s/c_1) \cap \rm{Unif}_{X;c_1}(s/c_1)$ and $P_2 \in \rm{Poly}_{X_{2};c_1}(s/c_1) \cap \rm{Unif}_{X;c_1}(s/c_1)$ such that if the events $\Cc_{P_1}(\Sc^+_n)$ and $\Cc_{P_2}(\Sc^-_n)$ both hold, then so does $\Cc_{Q}(\mathcal{N}_n)$.
 \end{enumerate}
\end{lemma}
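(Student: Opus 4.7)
The plan is to construct $P$, $P_1$, $P_2$ explicitly from the box-chain that witnesses $Q \in \rm{Unif}_{X;c}(s)$. Fix $Q = (D; \gamma, \gamma')$ and let $\{B_i\}_{1 \le i \le n}$, with $n \le c$ and $B_i = (D_i; \alpha_i, \alpha'_i) \in \rm{Box}_{X;c}(s)$, be such a chain.

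For part (1), I would take $P := \bigcup_{i=1}^n B_i$, viewed as a quad with crossing sides $\gamma_P := \alpha_1$ and $\gamma'_P := \alpha'_n$. The boundary $\partial P$ is built from geodesic segments of the $\partial B_i$, and the bound $n \le c$ guarantees at most $c_1 = c_1(c)$ sides, so $P \in \rm{Poly}_{X;c_1}(s)$; the chain $\{B_i\}$ itself crosses $P$, so also $P \in \rm{Unif}_{X;c}(s)$. For the implication $\Cc_P(\Sc^+_n) \Rightarrow \Cc_Q(\Sc^+_n)$: any continuum $C \subset \Sc^+_n \cap \overline{P}$ witnessing the crossing of $P$ traverses $B_1$ from $\alpha_1$ onward, and since $\gamma$ transversally crosses $B_1$ via its lateral sides, the elementary fact that two transverse crossings of a rectangle must intersect forces $C \cap \gamma \neq \varnothing$; symmetrically $C \cap \gamma' \neq \varnothing$ inside $B_n$. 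A sub-continuum of $C$ lying in $\overline{D}$ between its first passage through $\gamma$ in $B_1$ and its first subsequent passage through $\gamma'$ in $B_n$ then joins $\gamma$ to $\gamma'$ inside $\Sc^+_n \cap \overline{D}$.

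For part (2), the strategy is to thin each box into two parallel sub-boxes separated by a gap. For each $i$, subdivide $D_i$ via geodesic cuts parallel to its lateral sides into three parallel sub-rectangles $D_i^{(1)}, D_i^{(\rm{mid})}, D_i^{(2)}$, each of width at least $s/3$; define sub-boxes $B_i^{(j)}$ ($j=1,2$) with domains $D_i^{(j)}$ and crossing sides appropriately inherited from $B_i$. A careful matching of labels across the chain makes $\{B_i^{(j)}\}_{1 \le i \le n}$ a box-chain for each $j$ (the transversality of adjacent sub-boxes being inherited from the original chain, possibly after enlarging $c_1$ or replacing sub-boxes with adapted variants at the overlap regions). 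Applying the construction of part (1) to each sub-chain yields polygons $P_1, P_2$ with $P_j \in \rm{Poly}_{X_j;c_1}(s/c_1) \cap \rm{Unif}_{X;c_1}(s/c_1)$ for a suitable $c_1 = c_1(c)$ and small neighbourhoods $X_j$ of $P_j$. The separating strips $D_i^{(\rm{mid})}$ enforce $d(P_1, P_2) \ge s/c_1$.

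The topological implication $\Cc_{P_1}(\Sc^+_n) \cap \Cc_{P_2}(\Sc^-_n) \Rightarrow \Cc_Q(\mathcal{N}_n)$ is the main step, established via a classical planar separation argument. Part (1), applied to each $P_j$, provides disjoint continua $C_1 \subset \Sc^+_n \cap \overline{D}$ and $C_2 \subset \Sc^-_n \cap \overline{D}$, each joining $\gamma$ to $\gamma'$. These continua, together with the arcs of $\gamma$ and $\gamma'$ between their endpoints, bound a closed Jordan region $R \subset \overline{D}$ (well-defined in a local planar chart of $\mathbb{X}$ since $R$ is simply-connected). Either $\mathcal{N}_n \cap R$ contains a connected subset joining the $\gamma$- and $\gamma'$-arcs of $\partial R$ -- which is exactly $\Cc_Q(\mathcal{N}_n)$ -- or, by the Janiszewski-type planar separation theorem, $R \setminus \mathcal{N}_n$ contains a continuum connecting $C_1$ and $C_2$. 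The latter is impossible: such a continuum lies in $\Sc^+_n \cup \Sc^-_n$, and being connected it lies wholly in either $\Sc^+_n$ or $\Sc^-_n$, contradicting $C_1 \subset \Sc^+_n$ and $C_2 \subset \Sc^-_n$. The main obstacle is the geometric bookkeeping needed in part (2) to ensure the sub-chains genuinely satisfy transversality; the planar separation step is standard modulo the local-chart reduction.
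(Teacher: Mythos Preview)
Your approach matches the paper's: both take $P$ as the union of the witnessing box-chain for part (1), and both split each $B_i$ into three parallel strips for part (2). Your topological argument for the implication $\Cc_{P_1}(\Sc^+_n)\cap\Cc_{P_2}(\Sc^-_n)\Rightarrow\Cc_Q(\mathcal{N}_n)$ via planar separation is correct and in fact more explicit than the paper's figure-based sketch.

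There is, however, a small scale mismatch in your part (2) that ``enlarging $c_1$'' does not fix. Your sub-boxes $B_i^{(j)}$ are $(a_i/3)\times b_i$ boxes with $b_i$ ranging up to $cs$. But membership in $\rm{Box}_{X;c_1}(s/c_1)$ (and hence in $\rm{Unif}_{X;c_1}(s/c_1)$ and $\rm{Poly}_{X_j;c_1}(s/c_1)$) forces side-lengths into $[s/c_1,\,c_1\cdot(s/c_1)]=[s/c_1,s]$; the upper cap is $s$ \emph{regardless of $c_1$}, so a side of length $cs>s$ never fits. The paper handles this by noting that each third ``can be crossed by box-chains using smaller boxes'': one crosses each strip $B_i^{(j)}$ by a short chain of boxes whose side-lengths are all at most $s$, and then links these chains across the $i$'s. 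Taking $P_j$ to be the union of these smaller boxes gives polygons with the correct memberships. This is exactly the ``adapted variants'' you anticipate needing, but it is a global refinement of the sub-chain rather than a local fix at overlap regions, and it is essential rather than optional.
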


\begin{figure}
\centering
\includegraphics[width=0.48\textwidth]{box_chain.pdf}
\includegraphics[width=0.48\textwidth]{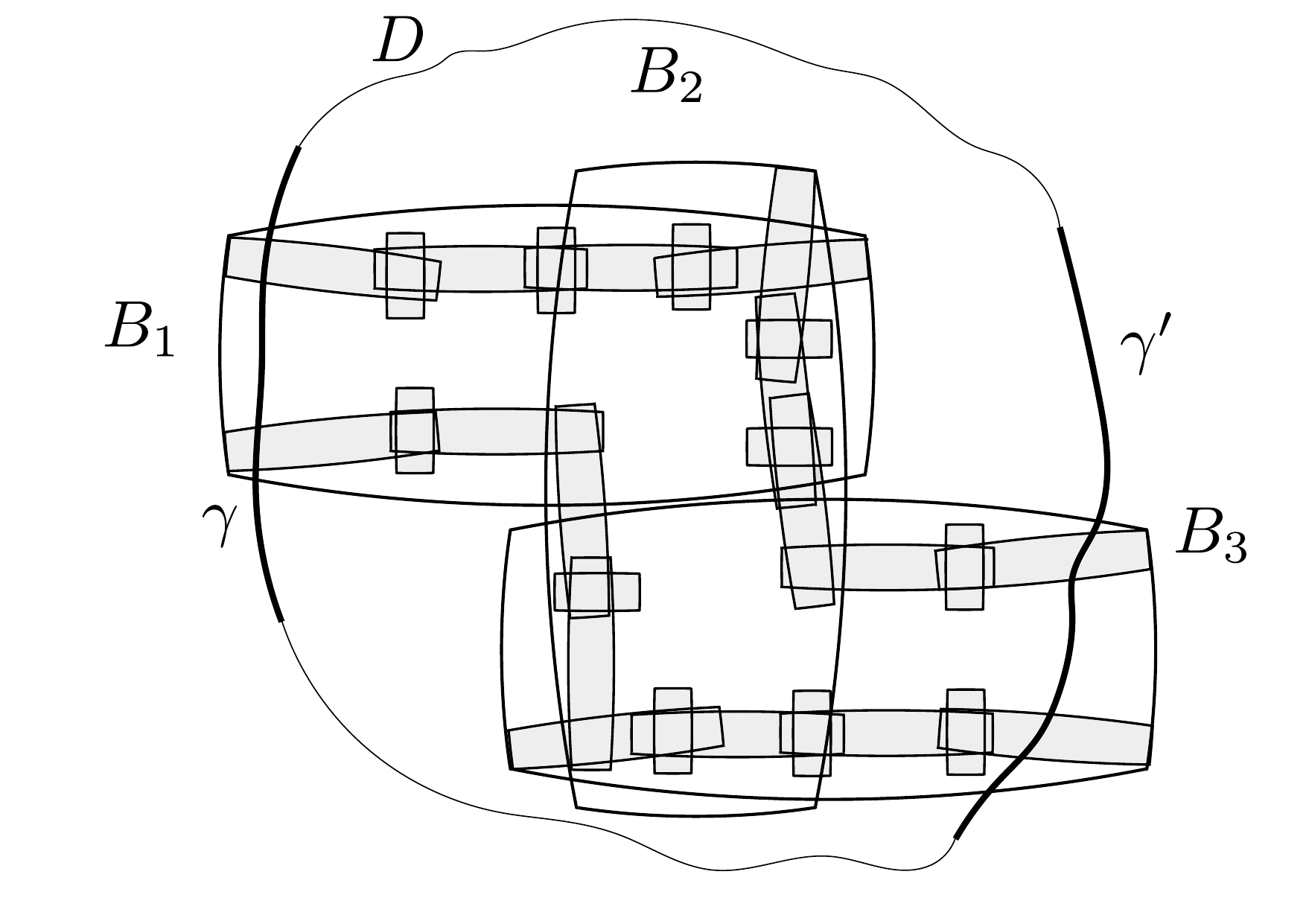}
\caption{Left: The union of a box-chain that crosses a quad $Q = (D,\gamma,\gamma')$ forms a polygon such that a crossing of the polygon implies a crossing of the quad. Right: Within this polygon one can choose two well separated box chains of smaller rectangles such that a crossing of these by $\mathcal{S}_n^+$ and $\mathcal{S}_n^-$ respectively implies a crossing by $\mathcal{N}_n$ of the polygon.}
\label{fig:box chain nodal}
\end{figure}

\begin{proof}
For the first statement of Lemma \ref{l:geom}, one can simply take the polygon that is the union of the boxes comprising one of the box-chains that cross $Q$ guaranteed by the Definition~\ref{d:unifcross} (see Figure \ref{fig:box chain nodal}, left). For the second statement of Lemma \ref{l:geom}, we observe that the statement is true for any box $B  \in \rm{Unif}_{X;c}(s)$, since $B$ can be `divided' along two well-spaced geodesics into three parts, and the top and bottom parts can be crossed by box-chains using smaller boxes. Then for any quad $Q  \in \rm{Unif}_{X;c}(s)$ we can take the box-chain that crosses~$Q$ and decompose each constituent box using these geodesics (see Figure \ref{fig:box chain nodal}, right).
\end{proof}

\subsubsection{Verifying the microscopic conditions}

Here we argue that the two `microscopic' conditions (1) and (4) of Theorem
\ref{t:rsw} are satisfied; we begin by verifying the non-degeneracy condition (1). For use in this subsection we introduce $F_n$ (resp.\ $F_\infty$) as the centred, unit variance Gaussian random field on $\mathbb{R}^2$ with
the rescaled covariance kernel~$K_n$ (resp.~$K_\infty$), as in Definition \ref{def:loc unif scal}.

\begin{lemma}
\label{l:cond1}

~\begin{enumerate}
\item

For every $n \in \mathbb{N}$, $\mathbb{P}( f_n(0) = 0 ) = 0$.

\item Recall that for $v \in \mathbb{S}^1$ and $r>0$ the $\mathcal{L}_v(r)$ is the length-$r$ geodesic segment based at
the origin in direction $v$. For every $v \in \mathbb{S}^1$ we have
\begin{equation*}
\lim_{r \to 0} \, \limsup_{n \to \infty} \, \mathbb{P}(\exists x \in \mathcal{L}_v(r s_n).\,   f_n(x) = 0  ) = 0 .
 \end{equation*}

\end{enumerate}

\end{lemma}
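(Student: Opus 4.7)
The plan is that Part (1) is immediate from the unit-variance normalisation, while Part (2) reduces, after rescaling, to a one-dimensional Kac-Rice bound on a short interval.

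For Part (1), since $f_n$ is centred with $\kappa_n(0,0)=1$, the random variable $f_n(0) \sim N(0,1)$ has a continuous density, so $\P(f_n(0)=0)=0$.

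For Part (2), I work with the rescaled field $F_n(x) := f_n(\Phi(s_n x))$ on $\R^2$, whose covariance kernel is $K_n$ as in Definition \ref{def:loc unif scal}. Since $\Phi$ is a local isometry at the origin and $s_n \to 0$, for all sufficiently large $n$ the geodesic $\mathcal{L}_v(r s_n) \subset \Xb$ is the image under $x \mapsto \Phi(s_n x)$ of the Euclidean segment $\{tv : t \in [0,r]\}$, so
$$ \P\bigl(\exists\, x \in \mathcal{L}_v(r s_n) : f_n(x) = 0\bigr) = \P\bigl(\exists\, t \in [0,r] : G_n(t) = 0\bigr), $$
where $G_n(t) := F_n(tv)$ is a smooth centred Gaussian process on $[0,r]$. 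By Markov's inequality and the Kac-Rice formula applied to $G_n$,
$$ \P\bigl(\exists\, t \in [0,r] : G_n(t) = 0\bigr) \;\le\; \E\bigl[\#\{t \in [0,r] : G_n(t)=0\}\bigr] \;=\; \int_0^r \E\bigl[\,|G_n'(t)| \,\big|\, G_n(t)=0\bigr]\, \phi_{G_n(t)}(0)\, dt. $$

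It then suffices to bound the integrand uniformly in large $n$ and $t \in [0,r]$. The variance of $G_n(t)$ equals $K_n(tv, tv)$, which converges uniformly to $K_\infty(0) = 1$ on $[0,r]$ (for $r$ small) by Definition \ref{def:loc unif scal}, so $\phi_{G_n(t)}(0)$ is uniformly bounded. The variance of $G_n'(t)$ is a mixed second derivative of $K_n$ evaluated at $(tv, tv)$, which converges uniformly to the corresponding derivative of $K_\infty$ by the standing hypothesis of convergence along with the first four derivatives; together with non-degeneracy of $K_\infty$ (Assumption \ref{a:nondegen}) this gives a uniform non-degenerate bound on the $2\times 2$ covariance matrix of $(G_n(t), G_n'(t))$, and hence a uniform bound on the conditional expectation $\E[|G_n'(t)| \mid G_n(t)=0]$. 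Consequently the Kac-Rice integral is $O(r)$ uniformly in large $n$, and taking $\limsup_{n \to \infty}$ followed by $r \to 0$ yields Part (2). No essential obstacle is expected; the only mild subtlety is obtaining a uniform bound on the conditional first moment of $G_n'(t)$, which is handled directly by the assumed local uniform convergence of the kernels and their derivatives.
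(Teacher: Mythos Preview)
Your proof is correct and follows essentially the same approach as the paper: bound the probability of a zero on the short segment by the expected number of zeros via Kac--Rice and Markov, then use the local uniform convergence of the covariance kernels (and their derivatives) to control the resulting first-moment quantity uniformly in $n$. The only presentational difference is that the paper works directly on $\Xb$ and exploits the symmetry assumption to make the Kac--Rice integrand exactly constant (in particular, stationarity gives $\nabla f_n(0)$ independent of $f_n(0)$, so the conditional expectation equals the unconditional one), whereas you rescale first and then bound the integrand uniformly over $[0,r]$; both routes give the same $O(r)$ bound.
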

\begin{proof}
The first statement is clear upon recalling that $f_n$ is symmetric and non-degenerate. For the second statement, observe that by the Kac-Rice formula \cite[Theorem 6.3]{AW}, the symmetries of $f_n$, and since Assumption \ref{a:nondegen} guarantees that $\nabla f_n(0)$ is independent of $f_n(0)$, for each $r > 0$,
\[ \mathbb{E} [  | \{ x \in \mathcal{L}_v(r s_n) :  f_n(x) = 0 \} | ] =     \frac{r}{\sqrt{2 \pi}}  \,   \, s_n \,  \mathbb{E}\left[  \left| \frac{\partial f_n(0)}{ \partial v} \right|  \right]  . \]
Since $\kappa_n$ is $C^2$, it holds that
\[ \mathbb{E}\left[   \left| \frac{\partial f_n(0)}{ \partial v} \right|  \right]   =     \sqrt{ \frac{2}{\pi}  \frac{    - \partial^2 \kappa_n(0)}{ \partial^2 v} } . \]
Hence by the local uniform convergence of the second derivatives of $\kappa_n$, and since $F_\infty$ satisfies Assumption~\ref{a:nondegen},
\[    \lim_{n \to \infty}  \, s_n \, \mathbb{E}\left[   \left| \frac{\partial f_n(0)}{ \partial v} \right|  \right]   =   \lim_{n \to \infty}  \,  \sqrt{ \frac{2}{\pi}  \frac{    - s_n^2 \partial^2 \kappa_n(0)}{ \partial^2 v} }    = \sqrt{ \frac{2}{\pi}  \frac{   -  \partial^2 K_\infty(0)}{ \partial^2 v} }  < \infty .\]
Taking $r \to 0$ yields the result.
\end{proof}

Next we verify condition (4) of Theorem \ref{t:rsw} guaranteeing arbitrary crossings on microscopic scales; to this end we formulate the following lemma.

\begin{lemma}
\label{l:exp}
There exists a number $\delta > 0$ such that
\begin{equation*}
\liminf_{n \to \infty}  \mathbb{P} \left(F_n(x) > 0 \text{ for all }  x \in B(\delta) \right) > 0 .
\end{equation*}
\end{lemma}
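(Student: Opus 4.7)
The plan is to first prove the bound for the limit field $F_\infty$ on a small closed ball, and then transfer it to $F_n$ by a weak convergence argument in $C^0$.

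For $F_\infty$: since $F_\infty(0)$ is standard Gaussian, $p := \mathbb{P}(F_\infty(0) > 2) > 0$. By Assumption~\ref{a:nondegen} applied to the limiting kernel $K_\infty$, the field $F_\infty$ is a.s.\ $C^2$, so the oscillation $\omega_\infty(\delta) := \sup_{x \in \overline{B(\delta)}} |F_\infty(x) - F_\infty(0)|$ tends to $0$ almost surely as $\delta \to 0$, and by dominated convergence the probability $\mathbb{P}(\omega_\infty(\delta) > 1)$ does too. Fixing $\delta > 0$ small enough that $\mathbb{P}(\omega_\infty(\delta) > 1) < p/2$, on the event $\{F_\infty(0) > 2\} \cap \{\omega_\infty(\delta) \le 1\}$ we have $F_\infty \ge 1$ throughout $\overline{B(\delta)}$, so a union bound yields
\[ \mathbb{P}\bigl(F_\infty(x) > 0 \text{ for all } x \in \overline{B(\delta)}\bigr) \geq p - p/2 = p/2 > 0. \]

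To transfer this to $F_n$, I would establish weak convergence $F_n \Rightarrow F_\infty$ as random elements of $C^0(\overline{B(\delta)})$. Convergence of finite-dimensional distributions is immediate from the local uniform convergence $K_n \to K_\infty$. For tightness, the uniform convergence of the second derivatives of $K_n$ yields a uniform bound $\mathbb{E}|F_n(x) - F_n(y)|^2 = 2(1 - K_n(x,y)) \leq C \|x-y\|^2$ (via a Taylor expansion of $K_n$ around the diagonal, which is a maximum), and Gaussianity together with Kolmogorov's continuity criterion then produces a uniform Hölder modulus, hence tightness in $C^0(\overline{B(\delta)})$. Since the set $A := \{F \in C^0(\overline{B(\delta)}) : \inf F > 0\}$ is open in the sup-norm topology (as the map $F \mapsto \inf F$ is $1$-Lipschitz), Portmanteau yields
\[ \liminf_{n \to \infty} \mathbb{P}\bigl(F_n(x) > 0 \text{ for all } x \in \overline{B(\delta)}\bigr) \geq \mathbb{P}(F_\infty \in A) > 0, \]
which is stronger than the stated conclusion.

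I do not anticipate serious obstacles: all ingredients are routine consequences of the smoothness and convergence hypotheses, and the $C^4$ convergence of the covariance kernels provides far more regularity than is needed for this step. The only point requiring any care is the $C^0$-tightness of $(F_n)$, and this is entirely standard.
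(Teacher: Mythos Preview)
Your argument is correct, but it takes a different route from the paper's. The paper avoids weak convergence altogether by using Slepian's lemma: since $K_\infty$ has strictly negative-definite Hessian at the origin and $K_n \to K_\infty$ locally uniformly together with its first derivatives, one can find $\delta_1 > 0$ so that for all large $n$ one has $K_n(x,y) > \tilde K_\infty(x,y) := K_\infty(x/2,y/2)$ on $B(\delta_1)$. Slepian's inequality then gives, for every large $n$ and every $\delta < \delta_1$, the pointwise bound $\mathbb{P}(F_n > 0 \text{ on } B(\delta)) \ge \mathbb{P}(\tilde F_\infty > 0 \text{ on } B(\delta))$, and the right-hand side is shown to be positive for small $\delta$ by a Borel--TIS/Taylor argument on the derivative of $\tilde F_\infty$, very close in spirit to your oscillation bound. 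Your approach via tightness in $C^0$ and Portmanteau is more general (it would apply to non-Gaussian limits as well) and arguably more conceptual; the Slepian route is more Gaussian-specific but slightly sharper in that it produces an explicit lower bound valid for each large $n$ without invoking a compactness argument.
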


Before we state the proof of Lemma \ref{l:exp}, we show that it implies condition (4) of Theorem~\ref{t:rsw}.

\begin{corollary}
\label{c:cont}
There exists a number $\delta > 0$ such that
\[\liminf\limits_{n\rightarrow\infty} \inf\limits_{Q \in\rm{Quad}_{B(\delta s_n)}} \, \mathbb{P} ( \mathcal{C}_Q(\mathcal{S}^+_n) ) > 0 ,\]
where $\Sc^{+}_{n}$ are the positive excursion sets \eqref{eq:Sn excursion def} of $f_{n}$.
\end{corollary}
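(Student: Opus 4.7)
The plan is to exploit Lemma \ref{l:exp} to produce, with probability bounded away from zero uniformly in $n$, an event on which $f_n$ is strictly positive throughout the entire microscopic ball $B(\delta s_n) \subseteq \mathbb{X}$; on such an event any quad lying inside that ball is contained in $\mathcal{S}^+_n$ as a whole and is therefore automatically crossed by $\mathcal{S}^+_n$.

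Concretely, I would first invoke Lemma \ref{l:exp} to fix $\delta_0 > 0$ and $c_0 > 0$ with
\[
\liminf_{n \to \infty} \mathbb{P}\bigl(F_n > 0 \text{ on } B(\delta_0)\bigr) > c_0,
\]
and then set $\delta = \delta_0/2$, so that the compact inclusion $\overline{B(\delta)} \subset B(\delta_0)$ upgrades the above to the same lower bound on the event $\{F_n > 0 \text{ on } \overline{B(\delta)}\}$ (no continuity argument is even needed, since $\overline{B(\delta)} \subset B(\delta_0)$ as a set).

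Next I would transfer this estimate back to $f_n$. By Definition \ref{def:loc unif scal}, the rescaled field $F_n$ has covariance kernel $K_n(x,y) = \kappa_n(\Phi(s_n x), \Phi(s_n y))$, which coincides with that of the pull-back $x \mapsto f_n(\Phi(s_n x))$, so the two centred Gaussian fields have the same law. Since $\Phi$ is a local linear isometry based at the origin, for all $n$ large enough $\delta s_n$ lies below the injectivity radius of $\Phi$ at $0$, and $\Phi$ sends $s_n \overline{B(\delta)} \subset \mathbb{R}^2$ isometrically onto the metric ball $\overline{B(\delta s_n)} \subset \mathbb{X}$. Consequently, for all such $n$,
\[
\mathbb{P}\bigl(f_n > 0 \text{ on } \overline{B(\delta s_n)}\bigr) \; = \; \mathbb{P}\bigl(F_n > 0 \text{ on } \overline{B(\delta)}\bigr) \; > \; c_0.
\]

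Finally, I would observe that on the event $\{f_n > 0 \text{ on } \overline{B(\delta s_n)}\}$, every quad $Q = (D, \gamma, \gamma') \in \rm{Quad}_{B(\delta s_n)}$ satisfies $\overline{D} \subseteq B(\delta s_n) \subseteq \mathcal{S}^+_n$, so $\overline{D}$ lies inside a single connected component of $\mathcal{S}^+_n$ which in particular meets both boundary arcs $\gamma$ and $\gamma'$; thus $\mathcal{C}_Q(\mathcal{S}^+_n)$ occurs. Hence $\mathbb{P}(\mathcal{C}_Q(\mathcal{S}^+_n)) > c_0$ uniformly over $Q \in \rm{Quad}_{B(\delta s_n)}$ and all sufficiently large $n$, which is the claim. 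There is no genuine obstacle in the argument: all the substantive work is carried by Lemma \ref{l:exp}, and the only minor bookkeeping is the passage from the open ball appearing in its statement to a slightly smaller closed ball containing every admissible $\overline{D}$.
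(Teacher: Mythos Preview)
Your proposal is correct and follows essentially the same approach as the paper: invoke Lemma~\ref{l:exp} to get uniform positivity of $F_n$ on a small ball, transfer this to $f_n$ via $\Phi$, and observe that a quad contained in $\mathcal{S}^+_n$ is automatically crossed. The only minor slip is the word ``isometrically'': for $\mathbb{X}=\mathbb{S}^2$ the exponential map is not an isometry on any open set, only its differential at $0$ is. What you actually need (and what holds for both the covering map and the exponential map) is the set containment $\Phi(s_n \overline{B(\delta)}) \supseteq \overline{B(\delta s_n)}$, which is exactly how the paper phrases it; your argument goes through unchanged with this correction.
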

\begin{proof}
By Lemma \ref{l:exp}, there is a number $\delta > 0$ such that
 \[ \liminf_{n \to \infty}  \mathbb{P} \left(  \Phi( B(\delta s_n ) )  \subseteq \mathcal{S}^+_n\   \right) > 0 .  \]
Since $\Phi$ is locally an isometry, for any $\delta_1 < \delta$, $ \Phi( B(\delta s_n ) )$ eventually contains the disk $B(\delta_1 s_n)$. Finally, since the occurrence of the event $\{B(\delta_1 s_n)  \subseteq \mathcal{S}^+_n\}$ implies the crossing event $\Cc_Q(\Sc_n)$ for any $Q \subset B(\delta_1 s_n)$, we have the result.
\end{proof}

\begin{proof}[Proof of Lemma \ref{l:exp}]
Recall that $K_\infty$ denotes the limit of $K_n$, well-defined as a stationary $C^2$ covariance kernel on $B(\delta_0)$. Define a stationary covariance kernel $\tilde{K}_\infty$ on $B(\delta_0/2)$ by
\[  \tilde{K}_\infty(x,y) = K_\infty(x/2, y/2) ,\]
and let $\tilde{F}_\infty$ denote the centred, unit variance Gaussian random field on $B(0, \delta_0/2)$ with covariance kernel~$\tilde{K}_\infty$. By the local uniform convergence of $K_n$ and its first three derivatives to~$K_\infty$ and its respective derivatives,
and the strictly negative second derivatives of~$K_\infty$ (since~$K_\infty$ satisfies Assumption \ref{a:nondegen}), there exists a $\delta_1 \in (0, \delta_0 / 2)$ such that, for sufficiently large $n$,
\[ K_n(x, y) > \tilde{K}_\infty(x, y)  \qquad \text{for all}  \quad x,y \in B(\delta_1). \]
Hence by Slepian's lemma \cite[Theorem 2.2.1]{Adler-Taylor}, for sufficiently large $n$, every $\delta \in (0, \delta_1)$ satisfies
\begin{equation}
\label{eq:prob pers comparison}
\mathbb{P} \left( F_n(x) > 0 \text{ for all }  x \in B(\delta)\right)
\ge
\mathbb{P} \left(\tilde{F}_\infty(x)  \text{ for all }  x \in B(\delta) \right).
\end{equation}
It then remains to prove the existence of a $\delta \in (0, \delta_1)$ such that the
latter probability is positive.

By the Borel-TIS Theorem \cite[Theorem 2.1.1]{Adler-Taylor} and Markov's inequality, there exists a number $c_1 > 0$ such that for every $\lambda > 0$,
\[   \mathbb{P} \left(    \sup_{v \in \mathbb{S}^1} \max_{ x \in B(\delta_0/2)  } \left|\frac{\partial \tilde{F}_{\infty}}{\partial v}(x)\right|  > \lambda \right) < c_1/ \lambda .  \]
Hence, by taking $\lambda_1, \lambda_2 > 0$ sufficiently small, the event
\[ E=  \left\{   \tilde{F}_\infty(0) > \lambda_1\right\} \cap\left\{\sup_{v \in \mathbb{S}^1} \max_{x \in B(\delta_0/2)} \left|\frac{\partial \tilde{F}_{\infty}}{\partial v}(x)\right|  < \lambda_2  \right\}  \]
has positive probability. By Taylor's theorem we can choose $\delta > 0$ sufficiently small that
\[
E\subseteq  \{ \tilde{F}_\infty(x)  \text{ for all }  x \in B(\delta) \}  ;
 \]
which, since $\mathbb{P}(E) > 0$ and in light of \eqref{eq:prob pers comparison}, yields Lemma \ref{l:exp}.
\end{proof}

\subsubsection{Proof of Theorem \ref{t:main} assuming Theorem \ref{t:rsw}, Proposition \ref{p:meas} and Lemma \ref{l:comp}}

Let $(f_n)_{n \in \mathbb{N}}$ be given as in Theorem \ref{t:main}; with no loss of generality we assume that $f_n$ are unit variance. Let $\eta > 0$, $X \subset \mathbb{X}$ and $s_n$ satisfy the conditions of Theorem \ref{t:main}.

We begin by slightly perturbing the covariance kernels $\kappa_n$ of $f_n$ to eliminate possible negative correlations. Define a collection of centred Gaussian random fields $(\tilde{f}_n)_{n \in \mathbb{N}}$ on $\mathbb{X}$ with respective covariance kernels
\begin{equation}
\label{eq:kappatild pert kappa}
\tilde{\kappa}_n (x, y) =   \kappa_n(x, y) + s_n^{12 + \eta/2}.
\end{equation}
Observe that, by condition \eqref{eq:asymp pos}, $\tilde{\kappa}_n$ is everywhere positive on $X$ for $n$ sufficiently large. Moreover, the choice
\eqref{eq:kappatild pert kappa} of perturbation means that the conclusion of Proposition \ref{p:neg} is valid.

We now argue that the positive excursion sets $\tilde{\mathcal{S}}^+_n$ of $\tilde{f}_n$ satisfy all the conditions of Theorem~\ref{t:rsw} for the set $X$ and sequence $s_n$; by symmetry, the same conclusion holds also for the negative excursion sets.
The justification for the validity of conditions (2), (3) and (5) of Theorem \ref{t:rsw} is via standard arguments: the symmetry of the excursion sets follows from the symmetry of the kernel, positive associations on $X$ follow from the positivity of the covariance kernels on~$X$ by the well-known result of Pitt \cite{Pitt}, and the probability of crossing square-boxes is exactly $1/2$ by the symmetry of the kernel and the symmetry of a Gaussian random field w.r.t.\ sign changes.
Moreover, conditions (1), (4) and (6) in Theorem \ref{t:rsw} follow from the analysis we developed above, namely Lemma \ref{l:cond1}, and corollaries \ref{c:cont} and~\ref{c:qi} respectively. Hence all the conditions of Theorem~\ref{t:rsw} are satisfied, and an application of Theorem \ref{t:rsw} yields the desired conclusions for $\tilde{f}_n$, i.e.\ that the positive (resp.\ negative) excursion sets of~$\tilde{f}_n$ satisfy the RSW estimates on $X$ on all scales. In particular, for all $c > 0$,
\begin{equation}
\label{e:rswtilde}
   \liminf_{n \to \infty} \,  \inf_{s > 0} \, \inf_{Q \in \rm{Unif}_{X;c}(s) }  \mathbb{P}( \Cc_Q(\tilde{ \mathcal{S}}_n^+) ) > 0 .
\end{equation}

Next we use Proposition \ref{p:neg} to infer that the positive excursion sets of $f_n$ also satisfy the RSW estimates on $X$ on all scales (the same statement for the negative excursion sets $\mathcal{S}^-_n$ then holds by an identical argument). Fix $c > 0$, and let $c_1 > 0$ be the constant prescribed by Lemma \ref{l:geom}. Also let $\varepsilon > 0$ be such that, for all sufficiently large $n \in \mathbb{N}$, both
\begin{equation}
\label{e:rswtilde2}
   \inf_{s > 0} \, \inf_{Q \in \rm{Unif}_{X;c}(s) }  \mathbb{P}( \Cc_Q(\tilde{\mathcal{S}}_n^+) )  > 2\varepsilon  ,
   \end{equation}
and
\begin{equation}
\label{e:rswtilde3}
   \sup_{s >0 } \, \sup_{P \in \rm{Poly}_{X;c_1}(s)}  | \mathbb{P}(\mathcal{C}_P(\mathcal{S}^+_n)) - \mathbb{P}(\mathcal{C}_P(\tilde{\mathcal{S}}^+_n)) |    <  \varepsilon ,
   \end{equation}
hold; possible by \eqref{e:rswtilde} and Proposition \ref{p:neg} respectively. Now let $ s > 0$ and $Q \in \rm{Unif}_{X; c}(s)$ be given. By Lemma \ref{l:geom}, there exists a polygon $P \in \rm{Poly}_{X;c_1}(s) \cap \rm{Unif}_{X;c}(s)$ such that the event $\Cc_P(\Sc_{n}^{+})$ is contained in the event $\Cc_Q(\Sc_{n}^{+})$. In particular, since $P \in  \rm{Unif}_{X;c}(s)$, by \eqref{e:rswtilde2}
\[     \mathbb{P}(\Cc_P(\tilde{\Sc}_{n}^{+})) > 2 \varepsilon   ,\] and in light of
\eqref{e:rswtilde3}, applicable since $P \in \rm{Poly}_{X;c_1}(s)$, we obtain
\[   \mathbb{P}(\mathcal{C}_P(\mathcal{S}^+_n))  > \varepsilon  .  \]
Finally, since $\Cc_P(\Sc_{n}^{+})\subseteq \Cc_Q(\Sc_{n}^{+})$, we conclude that
\[   \mathbb{P}(\mathcal{C}_Q(\mathcal{S}^+_n)) \ge \mathbb{P}(\mathcal{C}_P(\mathcal{S}^+_n))  > \varepsilon  ,  \]
the RSW estimates for $\Sc_n^+$ on all scales.

The final step of the proof of Theorem \ref{t:main} is using the first statement \eqref{eq:cross prob asymp ind} of Proposition~\ref{p:qi} to infer the RSW estimates for the {\em nodal sets} $\mathcal{N}_n$ of $f_n$ from the already established RSW estimates for the {\em excursion sets} of $f_{n}$. Again fix $c > 0$, and let $c_1 > 0$ be the corresponding constant appearing from Lemma \ref{l:geom}. Let $\varepsilon > 0$ and $C > 0$ be such that, for all sufficiently large $n \in \mathbb{N}$,
\begin{equation}
\label{e:rsw2}
   \inf_{s > 0} \, \inf_{ \substack{ Q_1 \in \rm{Unif}_{X;c_1}(s/c_1) , \\ Q_2 \in \rm{Unif}_{X;c_1}(s/c_1) } } \, \mathbb{P}( \Cc_{Q_1}(\Sc_n^+) ) \cdot \mathbb{P}( \Cc_{Q_2}(\Sc_n^-) )   > 2\varepsilon ,
   \end{equation}
and
\begin{equation}
\label{e:rsw3}
\sup_{ s > C s_n }  \, \sup_{ \substack{ X_1, X_2 \subset X , \\ d(X_1, X_2) > s/c_1} } \, \sup_{ \substack{P_1 \in \rm{Poly}_{X_1;c_1}(s/c_1) , \\ P_2 \in \rm{Poly}_{X_2;c_1}(s/c_1) } } \left| \mathbb{P} \left(\mathcal{C}_{P_1}(\mathcal{S}^+_n) \cap \mathcal{C}_{P_2}(\mathcal{S}^-_n) \right) - \mathbb{P}(\mathcal{C}_{P_1}(\mathcal{S}^+_n))\cdot  \mathbb{P}( \mathcal{C}_{P_2}(\mathcal{S}^-_n) )  \right| < \varepsilon.
\end{equation}
both hold; possible since the RSW estimates hold for the excursion sets of $f_n$ on all scales and by  \eqref{eq:cross prob asymp ind} respectively.

Now let $s > Cs_n$ and $Q \in \rm{Unif}_{X; c}(s)$ be given. By Lemma \ref{l:geom} there exist disjoint domains $X_1, X_2 \subset X$ satisfying $d(X_1, X_2) > s/c_1$ and polygons $P_1 \in \rm{Poly}_{X_1;c_1}(s/c_1) \cap \rm{Unif}_{X;c_1}(s/c_1)$ and $P_2 \in \rm{Poly}_{X_2;c_1}(s/c_1) \cap \rm{Unif}_{X;c_1}(s/c_1)$  such that if the events $\Cc_{P_1}(\Sc^+_n)$ and $\Cc_{P_2}(\Sc^-_n)$ both occur, then so does $\Cc_{Q}(\mathcal{N}_n)$,
i.e.,
\[  \Cc_{P_1}(\Sc^+_n)\cap \Cc_{P_2}(\Sc^-_n) \subseteq  \Cc_{Q}(\mathcal{N}_n) . \]
In particular, since $P_1, P_2 \in  \rm{Unif}_{X;c_1}(s/c_1)$, by \eqref{e:rsw2}
\[     \mathbb{P}(\Cc_{P_1}(\Sc^+_n))  \cdot  \mathbb{P}(\Cc_{P_2}(\Sc^-_n))  > 2 \varepsilon  . \]
Since also $P_1 \in \rm{Poly}_{X_1;c_1}(s/c_1)$, $P_2 \in \rm{Poly}_{X_2;c_1}(s/c_1)$ and $d(X_1, X_2) > s/c_1$, in light of \eqref{e:rsw3} we deduce that
\[   \mathbb{P}(\mathcal{C}_{P_1}(\mathcal{S}^+_n) \cap \mathcal{C}_{P_2}(\mathcal{S}^-_n))  > \varepsilon  .  \]
Finally, since $\Cc_{P_1}(\Sc^+_n) \cap \Cc_{P_2}(\Sc^-_n)\subseteq\Cc_{Q}(\mathcal{N}_n)$, we conclude that
\[   \mathbb{P}(\mathcal{C}_Q(\mathcal{N}_n)) \ge   \mathbb{P}(\mathcal{C}_{P_1}(\mathcal{S}^+_n) \cap \mathcal{C}_{P_2}(\mathcal{S}^-_n))   > \varepsilon  ,  \]
which validates the RSW estimates for $\mathcal{N}_n$ down to the scale $s_n$.

\subsection{Proof of Theorem \ref{t:kos} and the validity of Example \ref{ex:torus}}
\label{sec:prf thm Kostlan}

In this section we show that Theorem \ref{t:kos} and Example \ref{ex:torus} are within the scope of the more general Theorem \ref{t:main}.

\begin{proof}[Proof of Theorem \ref{t:kos}]
Observe that the covariance kernels $\kappa_n$ are symmetric in sense of Definition \ref{a:symmetry} and satisfy Assumption \ref{a:nondegen}.  Next we check the local uniform convergence of $\kappa_n$ together with all its derivatives on the scale~$s_n$ (previously stated at \eqref{eq:Kx(u,v)->Kinfty(u-v)}). For this, define the smooth functions $G_n: \mathbb{R}^2 \times \mathbb{R}^2 \to \mathbb{R}$ and $F_n: \mathbb{R} \to \mathbb{R}$ by
\[ G_n(x, y) =   \sqrt{n}   \cdot \| \Phi( x/\sqrt{n}) - \Phi(y/\sqrt{n} )  \| \quad \text{and} \quad  F_n(t) =  (\cos(t/n))^n .\]
An explicit computation shows that $G_n$ and $F_n$ converge locally uniformly together with all of their derivatives to the respective limits
\[  G_\infty(x, y) = \|x-y\|  \quad \text{and} \quad F_\infty(t) = e^{-t^2/2} ,\]
and hence so does their composition $F_n \circ G_n$. Since
\[  K_n(x,y) = F_n \circ G_n( x, y)  = \kappa_n( \Phi(s_n x), \Phi(s_n, y)  )  \quad \text{and} \quad K_\infty(x, y) = F_\infty \circ G_\infty(x, y)  , \]
we have the stated convergence.

It remains to show that conditions (4) and (5) of Theorem \ref{t:main} hold for any constant $\eta > 0$, scale $s_n = n^{-1/2}$ and $X \subset \mathbb{S}^2$ whose closure does not contain antipodal points.
For condition~(4), we observe that since the closure of $X$ does not contain antipodal points, there exists a number $c_1 < \pi$ such that $\theta(x, y) < c_1 $ for each $x,y \in X$. Therefore there exists a $c_2 > 0$ such that, for all $n \in \mathbb{N}$ and $x,y \in X$,
\[  \kappa_n(x,y ) = \cos( \theta(x, y))^n >  - e^{-c_2n}  ,  \]
and so, for any $\eta > 0$, as $n \to \infty$,
\[  s_n^{-12 - \eta}   \inf_{x,y \in X} (\kappa_n(x,y) \wedge 0 ) = - n^{6 + \eta/2} e^{-c_2n} \to 0  \,. \]

For condition (5), let $c_1 < \pi$ be as above, and choose a $c_2 \in (0, 1/ c_1^2)$ such that $|\cos(t)| \le 1 - c_2 t^2 $ for each $|t| < c_1$. Together with the inequality $\log(1-x) \le -x$, valid on $x \in (0, 1)$, we have for all $x,y \in X$,
\begin{align*}
  |\kappa_n(x, y) | =   |\cos( \theta(x, y) ) |^n  \le     e^{n \log( 1 - c_2 \theta(x, y)^2  )}  \le     e^{- n c_2 \theta(x, y)^2 }  = e^{-c_2 ( d(x, y) s_n^{-1} )^2 }  .
  \end{align*}
  Hence for any $\eta > 0$ and $C > 1$,
  \begin{align*}
     \limsup_{n \to \infty}  \sup_{\substack{x,y \in X, \\ \theta(x, y) > C s_n }} (\theta(x, y)s_n^{-1})^{18 + \eta} \,  |\kappa_n(x, y) |   \le  \limsup_{n \to \infty}  \,  \sup_{t > C}  \, t^{18 + \eta} \, e^{-c_2 t^2 }  = \sup_{t > C}  \, t^{18 + \eta} \, e^{-c_2 t^2 },
         \end{align*}
        which tends to zero as $C \to \infty$.
 \end{proof}

\begin{proof}[Validity of Example \ref{ex:torus}]
Remark first that $\kappa_n$ is a valid covariance kernel since $\cos^n(x) \cos^n(y)$ can be written as a Fourier series $\sum_{i,j} a_{i, j} \cos(ix) \cos(jy)$ for non-negative coefficients $a_{i, j}$, which implies that $\kappa_n$ is positive-definite.

Similarly to in the proof of Theorem \ref{t:kos} above, it is sufficient that the conditions of Theorem~\ref{t:main} hold for any constant $\eta > 0$, scale $s_n = n^{-1/2}$ and subset $X \subseteq \mathbb{T}^2$ such that the closure of $X$ does not contain distinct $x,y \in X$ having $2(x_1 - y_1)$ and $2(x_2 - y_2)$ as integers. The proof of this is similar to the proof of Theorem \ref{t:kos}, so we omit the details.
\end{proof}

%%%%%%%%%%%%%%%%%%%%%%%%%%%%%%%%%%%%%%%%%%%%%%%%%%%%%%%%%%%%
%%%%%%%%%%%%%%%%%%%%%%%%%%%%%%%%%%%%%%%%%%%%%%%%%%%%%%%%%%%%

\smallskip
\section{Proof of Theorem \ref{t:rsw}: RSW estimates for sequences of random sets}

\label{s:rsw}

In this section we prove the abstract RSW estimates in Theorem \ref{t:rsw}, following the argument in \cite{Tas16} that established the analogous estimates for planar Voronoi percolation.
For the benefit of a reader familiar with \cite{Tas16}, we explain the four main differences in our setting, and well as briefly describing the necessary modifications to the argument.\begin{enumerate}
\item Recall that Theorem \ref{t:rsw} is stated for either the unit sphere $\mathbb{S}^2$ or the flat torus~$\mathbb{T}^2$. The first difference is due to the non-Euclidean geometry of $\mathbb{S}^2$; indeed, since the interior angles of spherical squares (see Definition \ref{d:box chain}) depend on their scale, many of the simple geometric arguments in \cite{Tas16} fail in the spherical case and need to be derived from scratch or modified significantly. On the other hand, on the flat torus these arguments work as in \cite{Tas16}.

\item Second, we work with a sequence of random sets rather than a single set. Hence we rely on extra `uniform' conditions on the covariance kernels in the statement of Theorem~\ref{t:rsw}, which ensure that all the inputs into the argument are uniformly controlled.

\item Third, the random set considered in \cite{Tas16}, arising from planar Voronoi percolation, is asymptotically independent in a very strong sense: the Voronoi percolation restricted to disjoint domains is independent as long as there are no Voronoi cells intersecting both of them, see the discussion in section~\ref{s:perc}. Since we wish to apply Theorem~\ref{t:rsw} to Gaussian random fields, we do not have this type of strong mixing of the model, and instead we work with a much weaker notion of asymptotic independence (see condition (6) in the statement of Theorem \ref{t:rsw}).

\item Finally, unlike \cite{Tas16}, the property of `positive associations' only applies inside a subset $X \subseteq\mathbb{X}$; this is essential in order to include the Kostlan ensemble \eqref{eq:fn Kostlan def}. As a result, we need to take extra care in the argument to ensure our geometric constructions take place exclusively in this set.
\end{enumerate}

Before embarking on the proof of Theorem \ref{t:rsw}, we first build up a collection of preliminary results that hold for arbitrary $n \in \mathbb{N}$. The first result (Lemma \ref{l:cor13}) can be viewed as a modification of the `standard theory' of RSW: this shows how to transform the bounds on the probability of crossing a small fixed box to infer the bounds on the probability of crossing large domains. The second set of results
(section \ref{sec:Tassion arg}) contains our modification of Tassion's argument in \cite{Tas16}.

Throughout the rest of this section we fix a set $X \subseteq \mathbb{X}$ as in the statement of Theorem~\ref{t:rsw}. Our preliminary results depend only on the conditions of Theorem \ref{t:rsw} that hold for each $n \in \mathbb{N}$, namely the first non-degeneracy statement in condition (1), the symmetry in condition ~(2), and the guarantee of positive associations in $X$ in condition (3). We stress that all the preliminary estimates
that we state give lower bounds on various crossing probabilities depending on $n \in \mathbb{N}$ in terms of a positive power of the quantity
\begin{equation}
\label{e:c0}
 c_0(n) =  \inf_{s >0 } \inf_{ B \in \rm{Box}_{X; 1}(s) } \, \mathbb{P}( \mathcal{C}_{B}(\mathcal{S}_n) );
 \end{equation}
importantly these are monotone increasing in $c_{0}$. By condition (4) of Theorem \ref{t:rsw}, $c_0(n)$ is uniformly bounded away from zero for sufficiently large $n \in \mathbb{N}$, which, in light of the above, yields a uniform control over the crossing probabilities for varying $n$.
For the next two sections we work with arbitrary fixed $n \in \mathbb{N}$, and for notational convenience we drop all dependencies on $n$ and on the random set $\Sc_n$.

\subsection{The `standard theory' of RSW: From a fixed box to larger domains}

One of the most fundamental tools in percolation theory is the FKG inequality, which implies positive associations for the percolation subgraph, and in particular implies that crossing events are positively correlated. In the classical theory (i.e.\ on the plane), the FKG property is used to infer bounds on the probability of crossing larger domains from assumed bounds on the probability of crossing a fixed small box; we call this the `standard theory' of RSW. For instance, in  \cite[Corollary 1.3]{Tas16} `horizontal' crossings of two overlapping rectangles are connected via a `vertical' crossing of a square to deduce a `horizontal' crossing of a longer rectangle.

In our setting the property of positive associations is true in the set $X$ by assumption, and by analogy we shall refer to this fact as the `FKG property'. We next state a version of the `standard theory' of RSW that is valid in the spherical setting. On the sphere, the construction used in \cite[Corollary 1.3]{Tas16} fails, since two spherical rectangles cannot be overlapped in a way that the overlapping region is a square. Instead, we connect `horizontal' crossings using a third `vertical' rectangle.

Let us introduce a fixed box, $\bar B(s)$, which denotes, for each $s > 0$, an $s \times 2s$ box chosen arbitrarily. Recall also that, for each $c, r \ge 1$ and $s > 0$, the collection of boxes and annuli $\rm{Box}_{X; c}(s)$ and $\rm{Ann}_{X; c;r}(s)$ were introduced in definitions~\ref{d:box chain}, \ref{d:box2} and \ref{d:ann}, and note in particular that $\rm{Ann}_{X; 6; 6}(s) $ consists exclusively of $s \times 6s$ annuli.

\begin{lemma}[From a fixed box to arbitrary boxes and annuli; c.f. {\cite[Corollary 1.3]{Tas16}}]
\label{l:cor13}
There exists a sufficiently small $s^\ast > 0$ such that the following holds for every $c > 1$ and $s < s^\ast$:
there exists a monotone increasing function $f_{c}$, depending only on $c$, and an absolute monotone increasing function $g$ such that
\[   \inf_{ B \in \rm{Box}_{X; c}(3s)  } \mathbb{P}(\Cc_B)  > f_{c}   \left( \mathbb{P}(\Cc_{\bar B(s) })  \right)   \quad \text{and} \quad   \inf_{A \in \rm{Ann}_{X; 6; 6}(s)  }  \mathbb{P}( \Cc_A )  > g   \left( \mathbb{P}(\Cc_{\bar B(s)})  \right)   .\]

\end{lemma}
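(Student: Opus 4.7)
The plan is to combine the symmetry of the field (condition~(2)) with the FKG property in $X$ (condition~(3)) to chain together crossings of translated and rotated copies of the fixed $s \times 2s$ box $\bar B(s)$. By symmetry every $s \times 2s$ box has the same crossing probability $p := \mathbb{P}(\Cc_{\bar B(s)})$, and by FKG the intersection of $N$ such crossing events inside $X$ has probability at least $p^N$. What needs to be designed, therefore, is a finite configuration of $s \times 2s$ boxes whose simultaneous crossings force either a crossing of a large box or a circuit in an annulus. Compared to \cite[Corollary 1.3]{Tas16}, the new obstacle in the spherical case is that two adjacent spherical rectangles no longer overlap in a sub-rectangle (geodesics bend), so their crossings are not automatically joined by a vertical crossing of an overlap square. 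The remedy I would adopt is the one announced in the introduction to this section: insert small perpendicular connector rectangles that explicitly bridge each pair of adjacent crossings.

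\textbf{Box-crossing estimate.} First, fix $c > 1$ and $B = (D; \gamma, \gamma') \in \rm{Box}_{X;c}(3s)$, so $3s \le a, b \le 3cs$ where $\gamma, \gamma'$ have length $a$ and the crossing distance is $b$. I would build a box-chain from $\gamma$ to $\gamma'$ in the sense of Definition~\ref{d:box chain} by placing $\lceil b/s \rceil = O(c)$ horizontal $s \times 2s$ copies end-to-end along the crossing direction, each fitting inside $D$ since $a \ge 3s$ leaves ample lateral room, and inserting between every two successive horizontal copies a vertical $s \times 2s$ connector chosen so that both its lateral sides transversally cross each neighbouring horizontal box. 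The resulting chain has length $N(c) = O(c)$. By symmetry every box in the chain is crossed with probability $p$, and by FKG the joint event has probability at least $p^{N(c)}$; its occurrence implies $\Cc_B$. Taking $f_c(x) := x^{N(c)}$, which is monotone increasing in $x$, gives the first inequality.

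\textbf{Annulus-crossing estimate and main obstacle.} For an annulus $A \in \rm{Ann}_{X;6;6}(s)$, an $s \times 6s$ annular region bounded by concentric parallel squares of sides $s$ and $6s$, I would construct a circuit around the inner square from four rectangular ``arms'' of dimensions $\Theta(s)$ positioned in the top, bottom, left and right portions of the annulus, together with a small connector rectangle at each of the four corners. Each arm is an $a \times b$ box with $a, b = \Theta(s)$, so the box-crossing estimate from the previous step, applied with some absolute $c = O(1)$, yields a uniform lower bound $f_c(p)$ on the probability each arm is crossed; the same lower bound holds for each corner connector by symmetry. The four arm-crossings, joined through the four connectors, merge into a single connected component winding around $A$, and FKG gives an absolute lower bound $g(p) := f_c(p)^{O(1)}$ on this joint event, which is the second inequality. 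The main obstacle in both steps is the geometric verification that each spherical connector actually forces the desired connection; this reduces to a transversality statement between the lateral sides of each connector and its two neighbouring boxes, which holds uniformly once $s^\ast$ is taken small enough that the entire $O(cs)$-scale construction sits inside a single, near-Euclidean coordinate chart of $\mathbb{X}$ and, whenever the input box or annulus lies in $X$, stays entirely within it.
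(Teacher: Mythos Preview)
Your proposal is essentially the paper's proof: the paper also places alternating ``horizontal'' and ``vertical'' copies of $\bar B(s)$ along the geodesic joining the mid-points of $\gamma$ and $\gamma'$, and for the annulus runs four such chains along the four median geodesics bisecting the $s\times 6s$ annulus, then combines symmetry with FKG to get $p^{N}$ bounds.

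One small gap in your annulus step: you invoke the first statement to lower-bound the crossing probability of each arm, but the corridors of an $s\times 6s$ annulus have width $(6s-s)/2=5s/2<3s$, so the arms are not in $\rm{Box}_{X;c}(3s)$ and the first statement does not literally apply. The paper avoids this by building the four arm chains directly out of copies of $\bar B(s)$ (the $s\times 2s$ box fits in a corridor of width $5s/2$ once its short side is perpendicular to the median geodesic), rather than passing through the first statement. With that tweak your argument matches the paper's.
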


\begin{figure}
\centering
\includegraphics[width=0.48\textwidth]{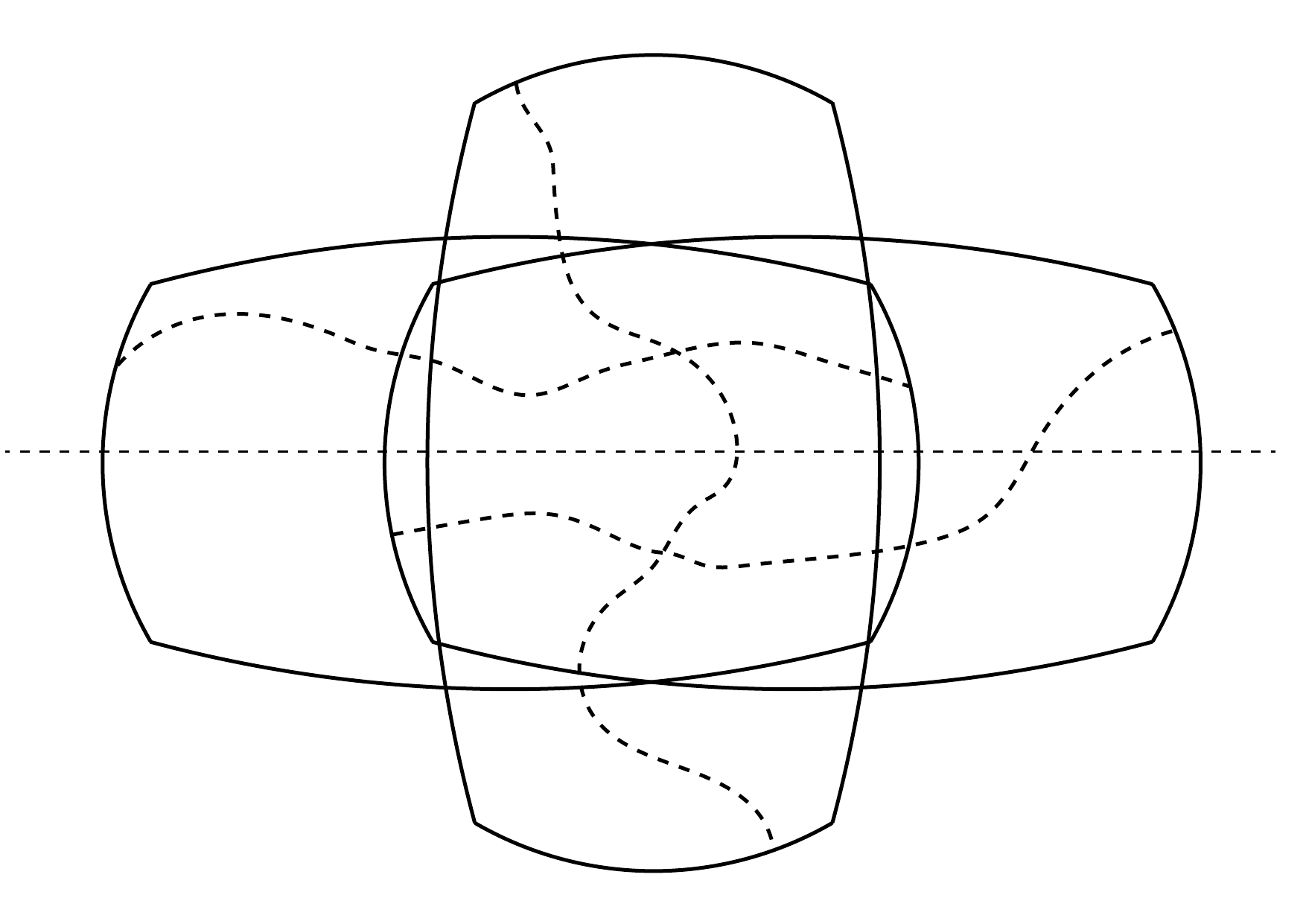}
\includegraphics[width=0.48\textwidth]{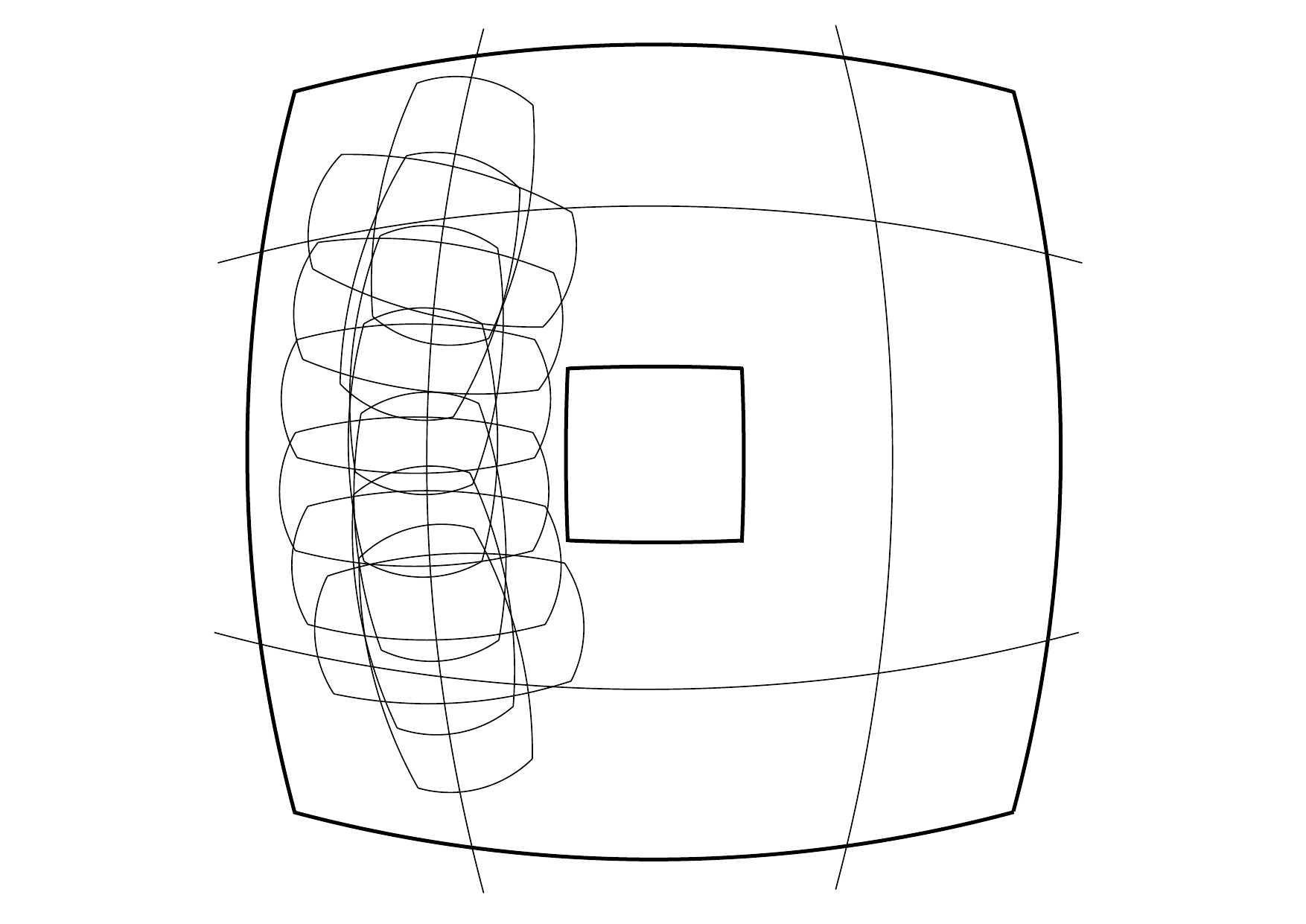}
\caption{Left: A crossing in a `vertical' rectangle connects crossings in two `horizontal' rectangles that are copies of each other shifted along a (dashed) geodesic. Right: By repeating this construction many times we can obtain a long crossing. Combining four of them we obtain a circuit in the annulus. }
\label{fig:gluing two rectangles}
\end{figure}

The value of $s^\ast$ depends only on the geometry of $\mathbb{X}^2$; in the case $\mathbb{X} = \mathbb{T}^2$ it could be arbitrary, whereas in the case $\mathbb{X} = \mathbb{S}^2$ it must be sufficiently small so that the distortions due to the spherical geometry are controlled on a ball $B(s^\ast)$. This constant could be computed explicitly, but its precise value is irrelevant.
In the Euclidean case, the numbers $3$ and $6$ in the statement of Lemma \ref{l:cor13} could be replaced by $2$ and $5$ respectively. Using $3$ and $6$ provides a bit more `space' in the spherical case to account for distortions.

\begin{proof}
The proof is based on the observation that, for sufficiently small $s^\ast > 0$ and $s < s^\ast$, it is possible to form a box-chain out of alternating `horizontal' and `vertical' copies of $\bar B(s)$ that are aligned along a single geodesic (see Figure \ref{fig:gluing two rectangles}, left).

For the first statement, fix $3s \le a, b \le 3cs$ and consider an $a \times b$ box $B \subseteq X$. Let $\{B_i\}$ be a box-chain consisting of horizontal and vertical copies of $\bar B(s)$ aligned along the geodesic joining the mid-points of the opposite sides of $B$. Since the shortest sides of $B$ are longer than the longest sides of $\bar B(s)$, for sufficiently small $s^\ast > 0$ and $s < s^\ast$ we can find such a $\{B_i\}$ that both crosses~$B$ and lies inside $B$ (c.f.\ the Euclidean case, where we could replace the number $3$ with~$2$); moreover, the number of boxes required depends only on $c$. Since the FKG property holds in~$B$, this establishing the bound.

The second statement is proved similarly, working instead with four inter-connecting box-chains aligned along the four `median' geodesics that bisect orthogonally the geodesic line segments joining the mid-points of the boundary squares of any $s \times 6s$ annuli $A$ (see Figure~\ref{fig:gluing two rectangles}, right). Such box-chains can be formed inside $A$ since $\bar B(s)$ fits inside~$A$ when aligned with its shortest sides perpendicular to a geodesic bisecting $A$ (c.f.\ the Euclidean case, where we could replace the number $6$ with~$5$).
\end{proof}

\subsection{Tassion's argument}
\label{sec:Tassion arg}

In this section we develop Tassion's argument from \cite{Tas16}, with suitable modifications to account for the difference in our setting. We begin by introducing, following Tassion, the concept of $H$-crossings and $X$-crossings of square boxes (see Figure \ref{fig:H_s and X_s} for an illustration in the spherical case).

Throughout this section, when the parameter $s^\ast$ in the statement of a lemma may be set sufficiently small, we always implicitly set it so that the conclusion of Lemma \ref{l:cor13}, as well as the conclusion of any proceeding lemmas in this section, is valid.
Since if $X$ has an empty interior Theorem \ref{t:rsw} has no content, we may assume that $X$ has non-empty interior, and, by symmetry, that $X$ contains an open ball $B(\delta_0)$ centred at the origin. Therefore we may assume that $s^\ast$ is sufficiently small so that all of the (finite) collections of domains that we manipulate in the proofs of the following lemmas are contained inside $B(\delta_0)$; we may thereby always assume the FKG property holds.

\begin{definition}

\leavevmode
\begin{enumerate}
\item For each $s > 0$ and $\alpha, \beta \in [0, s/2]$, an $H$-crossing of an $s \times s$ square box $B = (D; \gamma, \gamma')$, denoted by $\Hc_s(\alpha,\beta) = \Hc_{s;B}(\alpha,\beta) $, is the event that a connected component of $\Sc_{n}$, restricted to $B$, intersects both $\gamma$ and the segment of $\gamma'$ of length $\beta-\alpha$ at distance~$\alpha$ from the mid-point of $\gamma'$ (see Figure \ref{fig:H_s and X_s}, left).

\item For every $s > 0$ and $\alpha \in [0, s/2]$, an $X$-crossing of an $s \times s$ square box $B = (D; \gamma, \gamma')$, denoted by $\Xc_s(\alpha) = \Xc_{s;B}(\alpha)$, is the event that a connected component of $\Sc_{n}$, restricted to $B$, intersects the four segments of $\gamma \cup \gamma'$ obtained by removing from each of $\gamma$ and $\gamma'$ the centred intervals of length $2\alpha$ (see Figure \ref{fig:H_s and X_s}, right).
\end{enumerate}
\end{definition}

\begin{figure}[h]
  \centering
  \includegraphics[width=0.48\textwidth]{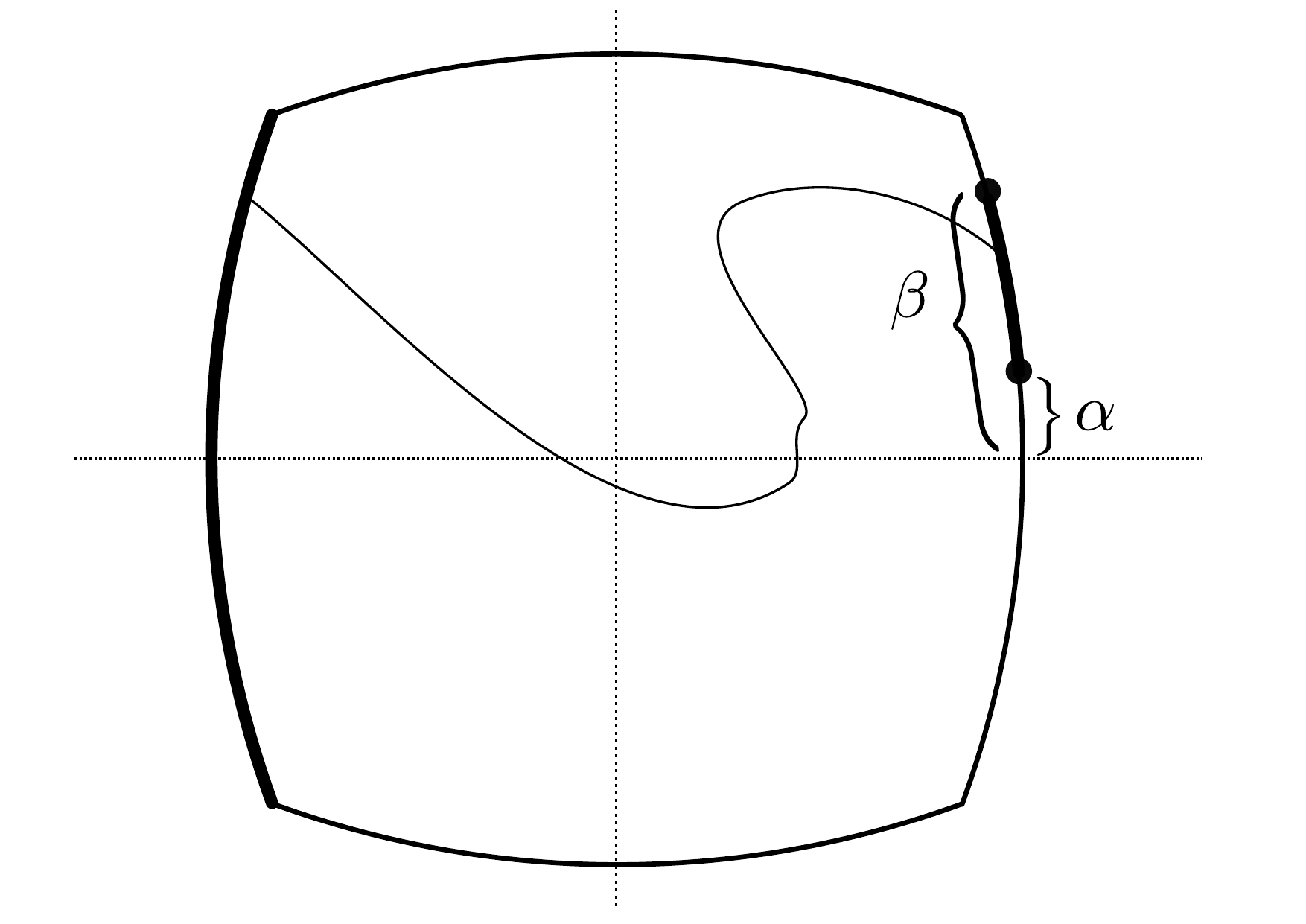}
  \includegraphics[width=0.48\textwidth]{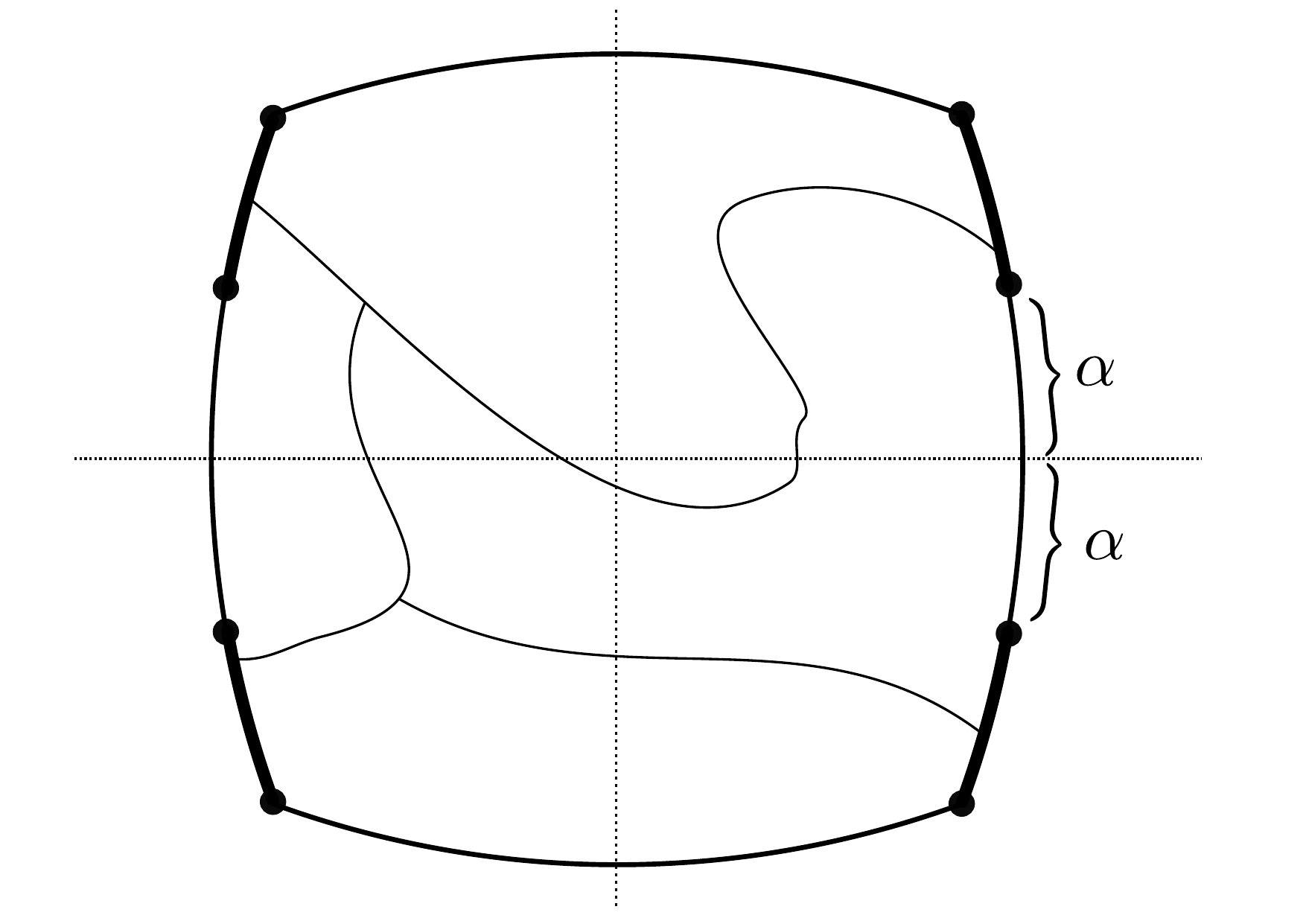}
\caption{An illustration of an $H$-crossing (left) and an $X$-crossing (right) of a square box in the spherical case.}
\label{fig:H_s and X_s}
\end{figure}

Observe that, by the symmetry condition (2) of Theorem \ref{t:rsw} and the definitions of $\Hc_s(\alpha, \beta)$ and $\Xc_s(\alpha) $, both $\P\brb{\Hc_s(\alpha, \beta)}$ and $\P\brb{\Xc_s(\alpha)}$ are independent of the choice of the square box~$B$. Hence the function
\begin{equation}
\label{eq:phisa def}
\phi_s(\alpha)=\P\brb{\Hc_s(0,\alpha)}-\P\brb{\Hc_s(\alpha,s/2)}
\end{equation}
is well-defined, and is a continuous function of~$\alpha$ by the first statement of the non-degeneracy condition (1) of Theorem \ref{t:rsw}.
Recalling the definition \eqref{e:c0} of $c_{0}$, for every scale $s > 0$ we may fix the constant
\[ \alpha_s=\min(\phi^{-1}_s(c_0/4),s/4),  \]
satisfying $\alpha_s \le s/4$.
The following lemma contains the essential consequences of the definition of $\alpha_s$, c.f. \cite[Lemma 2.1]{Tas16}.

\begin{lemma}
\label{lem:Tassion 2.1}
There exists a sufficiently small $s^\ast>0$, and absolute numbers $a_{1}>0$ and $k_{1}\in \mathbb{N}$, such that if $s < s^*$,
the following two properties hold:
\begin{itemize}
\item[{(P1)}] For all $0\le \alpha\le \alpha_s$, $\P\brb{\Xc_s(\alpha)} > a_{1}\cdot c_{0}^{k_{1}}$.
\item[{(P2)}] If $\alpha_s<s/4$, then for all $\alpha_s\le \alpha\le s/2$, $\P\brb{\Hc_s(0,\alpha)} \ge c_0/4+\P\brb{\Hc_s(\alpha,s/2)}$.
\end{itemize}
\end{lemma}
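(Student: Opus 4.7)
The plan is to derive (P2) essentially by definition and continuity, and to obtain the $X$-crossing in (P1) from a one-sided estimate combined with a topology-plus-FKG construction.

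For (P2): observe that $\alpha\mapsto \P[\Hc_s(0,\alpha)]$ is monotone non-decreasing (its target segment grows with $\alpha$) while $\alpha\mapsto \P[\Hc_s(\alpha,s/2)]$ is monotone non-increasing (its target segment shrinks), so $\phi_s$ is monotone non-decreasing. Continuity of $\phi_s$ in $\alpha$ is guaranteed by the first statement of the non-degeneracy condition (1) of Theorem \ref{t:rsw}, which rules out atoms of $\partial \Sc_n$ at the endpoints of the segment. Hence if $\alpha_s < s/4$ then $\phi_s(\alpha_s)=c_0/4$, and monotonicity yields $\phi_s(\alpha)\ge c_0/4$ for every $\alpha\in[\alpha_s,s/2]$, which rearranges to (P2).

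The key input for (P1) is the one-sided bound
\[
\P[\Hc_s(\alpha,s/2)] \ge c_0/8 \qquad \text{for every } \alpha\le \alpha_s .
\]
Its proof is a short arithmetic argument: the square-crossing event $\Cc_B$ is contained in the union of $\Hc_s(0,s/2)$ and its vertical reflection (any crossing ends on the upper or lower half of $\gamma'$), whose probabilities agree by the symmetries in condition (2); a union bound gives $\P[\Hc_s(0,s/2)]\ge c_0/2$. Combined with the inclusion $\Hc_s(0,s/2)\subseteq \Hc_s(0,\alpha)\cup \Hc_s(\alpha,s/2)$ and the defining bound $\phi_s(\alpha)\le c_0/4$, this delivers the stated inequality.

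To upgrade this one-sided estimate to an $X$-crossing, I would introduce the two diagonal events: $G^+$ that a single connected component of $\Sc_n$ in $B$ intersects outer upper $\gamma$ and outer lower $\gamma'$, and $G^-$ its reflection (outer lower $\gamma$ to outer upper $\gamma'$). Both events are monotone increasing. A planar topology argument --- any path in $B$ from outer upper $\gamma$ to outer lower $\gamma'$ separates the two remaining outer segments --- forces on $G^+\cap G^-$ the two a priori distinct clusters to share a point and hence coincide, so $G^+\cap G^-\subseteq \Xc_s(\alpha)$. To lower bound $\P[G^{\pm}]$, I would construct $G^+$ as the intersection of the crossings of three overlapping rectangles forming a Z-shape inside $B$: an upper horizontal rectangle whose left short side lies in outer upper $\gamma$, a middle vertical rectangle, and a lower horizontal rectangle whose right short side lies in outer lower $\gamma'$. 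Since $\alpha_s\le s/4$, these rectangles can be arranged with absolute aspect-ratio bounds, so Lemma \ref{l:cor13} bounds each of their crossings below by a power of $c_0$; small auxiliary square-crossings hosted inside the overlaps (again controlled by Lemma \ref{l:cor13}) force the three pieces to merge into a single cluster realising $G^+$. FKG (condition (3) of Theorem \ref{t:rsw}) and symmetry give $\P[G^{\pm}]\ge a_2 c_0^{k_2}$, whence $\P[\Xc_s(\alpha)]\ge \P[G^+]\cdot\P[G^-]\ge a_1 c_0^{k_1}$, as required.

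The main obstacle is the Z-shape construction: one must design the three rectangles and their square overlaps so that every aspect ratio is absolutely bounded and every piece sits inside a single ball $B(\delta_0)\subseteq X$ on which FKG is available. The threshold $s^{\ast}$ enters through this requirement, together with the need to control the distortion induced by positive curvature on $\mathbb{S}^2$; on $\mathbb{T}^2$ the construction works for arbitrary $s$, and so $s^{\ast}$ can be taken to depend only on the ball $B(\delta_0)$.
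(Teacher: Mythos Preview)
Your treatment of (P2) and of the one-sided bound $\P[\Hc_s(\alpha,s/2)]\ge c_0/8$ is correct and matches Tassion's argument (to which the paper simply defers).

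The gap is in your construction of the $X$-crossing. You invoke Lemma~\ref{l:cor13} to bound the crossing probabilities of the three rectangles in your Z-shape ``by a power of $c_0$'', but Lemma~\ref{l:cor13} does no such thing: its input is $\P[\Cc_{\bar B(s)}]$, the crossing probability of an $s\times 2s$ box, not the square-crossing constant $c_0$. At this point in the argument no lower bound on $\P[\Cc_{\bar B(s)}]$ is available; indeed, producing one is precisely the content of Lemma~\ref{lem:Tassion 2.2}, which \emph{uses} the present lemma as input. So your Z-shape step is circular. More fundamentally, the rectangles in any Z-shape joining the upper-left and lower-right outer segments of an $s\times s$ square necessarily have aspect ratio bounded away from~$1$ (since $\alpha_s\le s/4$ forces a traverse of length $\sim s$ through a strip of width $\sim s/4$), and crossing such rectangles from square-crossing bounds alone is exactly the RSW problem one is trying to solve.

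Tassion's route avoids this entirely: it combines $\Hc_s(\alpha,s/2)$ with its images under the horizontal and vertical reflections of the square (four increasing events, each of probability $\ge c_0/8$), together with a single vertical square crossing (probability $\ge c_0$ by the $\pi/2$-symmetry), all glued by FKG. A short planar-topology argument --- two non-intersecting horizontal crossings are totally ordered, so among the four $\Hc$-clusters the highest one must touch both upper outer segments and the lowest one both lower outer segments, and the vertical crossing meets every horizontal crossing --- forces a single cluster touching all four outer segments. This yields $\P[\Xc_s(\alpha)]\ge (c_0/8)^4\cdot c_0$, using only the $\Hc_s$-bound and $c_0$, with no appeal to rectangle crossings.
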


\begin{proof}
The proof of Lemma \ref{lem:Tassion 2.1} is independent of the geometry of the ambient space and the argument from \cite{Tas16} works in our setting unimpaired.
\end{proof}

The next three lemmas are the heart of Tassion's argument. Recall the fixed $s\times 2s$ box $\bar B(s)$ in the statement of Lemma \ref{l:cor13}. We think of $s > 0$ as being a `good' scale if it satisfies $\alpha_s\le 2\alpha_{2s/3}$, and proceed to formulate a few consequences of a good scale. As a corollary, we deduce, for a fixed $n \in \mathbb{N}$, the existence of uniform bounds on crossing probabilities on all large scales, provided that certain inputs into the argument are also controlled.

As in the proof of Corollary \ref{c:qi}, in this section we work with the collection $(A_{a, b})_{a<b}$ of $a \times b$ annuli centred at the origin that are `parallel', i.e.\ such that there is a single geodesic that passes through both mid-points of both pairs of opposite sides. When working with square boxes, we shall sometimes abuse notation by referring to these simply as `squares'.

\begin{lemma}[Good scales imply crossings of the fixed box; c.f. {\cite[ Lemma 2.2]{Tas16}}]
\label{lem:Tassion 2.2}
There exists a sufficiently small $s^* > 0$ and absolute numbers $a_{2}>0$ and $k_{2}\in \mathbb{N}$, such that if $s<s^*$ and $\alpha_s\le 2\alpha_{2s/3}$ then $\P ( \Cc_{\bar B(2s)} )  > a_{2}\cdot c_{0}^{k_{2}}$.
 \end{lemma}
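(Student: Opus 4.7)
The plan is to construct an event that implies $\Cc_{\bar B(2s)}$ as the intersection of an absolute (independent of $s$ and $n$) number of subevents, each of probability bounded below by a fixed positive power of $c_0$, and then apply FKG. For $s<s^*$ sufficiently small the entire geometric construction lies in $B(\delta_0)\subseteq X$, where positive associations hold, so FKG applies.

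The central step uses property (P1) of Lemma~\ref{lem:Tassion 2.1} simultaneously at the two scales $s$ and $2s/3$: at each scale this furnishes an $X$-crossing of the corresponding square with probability at least $a_1 c_0^{k_1}$. The good-scale hypothesis $\alpha_s \le 2\alpha_{2s/3}$ is exactly what makes these two $X$-crossings compatible for gluing. Quantitatively, the bound $\alpha_{2s/3} \ge \alpha_s/2$ forces the arm-regions at scale $2s/3$ (the four side-segments outside the central interval of half-length $\alpha_{2s/3}$) and the arm-regions at scale $s$ to overlap in a common sub-arc of positive width when the two squares are placed adjacent to each other along an appropriate geodesic. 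An FKG-gluing on this overlapping arc links the two $X$-crossings into a connected path traversing the union of the two squares; iterating this a bounded number of times extends the crossing along the long direction of $\bar B(2s)$.

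If necessary, the configuration is padded near the short sides of $\bar B(2s)$ by $H$-crossings of boxes of aspect ratio comparable to the fixed box, whose probabilities are controlled via Lemma~\ref{l:cor13} (which translates the defining bound $c_0$ into a bound on crossings of a wider class of boxes). The resulting composite event implies $\Cc_{\bar B(2s)}$ and factors, via FKG, into at most absolutely-many constituent events, each of probability at least a fixed positive power of $c_0$; this yields $\P[\Cc_{\bar B(2s)}] \ge a_2 c_0^{k_2}$ for absolute constants $a_2 > 0$ and $k_2 \in \mathbb{N}$.

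The main obstacle will be formalising the geometric gluing in the spherical case, where congruent squares are not related by rigid translations and their interior angles vary with the scale, so Tassion's planar construction from \cite{Tas16} does not transfer verbatim. I would handle this by taking $s^*$ small enough that the finitely many squares and overlap arcs used in the construction lie in a single small ball on which the ambient geometry deviates from Euclidean by a universally controlled amount, and absorbing this controlled distortion into the final choice of $a_2$ and $k_2$. This is precisely the same small-scale mechanism that underpins Lemma~\ref{l:cor13} (with its slightly relaxed aspect ratios $3$ and $6$ in place of $2$ and $5$), and requires only standard small-scale comparisons between spherical and Euclidean geometry.
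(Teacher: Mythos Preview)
Your proposal misses the essential dichotomy and misidentifies how the hypothesis $\alpha_s\le 2\alpha_{2s/3}$ is used. The paper (following Tassion) splits into two cases. If $\alpha_s=s/4$, then (P1) gives $\P[\Xc_s(s/4)]\ge a_1c_0^{k_1}$, and overlapping $s\times s$ squares shifted by $s/2$ force their $X$-crossings to intersect by topology; this case does not use the smaller scale at all. If $\alpha_s<s/4$, then (P1) at scale $s$ only yields $\Xc_s(\alpha_s)$ with $\alpha_s$ possibly tiny, which is useless for gluing: the arm regions are nearly the full half-sides, so two such $X$-crossings in adjacent or overlapping squares need not intersect (picture two horizontal crossings at different heights). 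The actual argument in this case uses (P2), not (P1), at scale $s$: since $2\alpha_{2s/3}\ge\alpha_s$, (P2) gives $\P[\Hc_s(0,2\alpha_{2s/3})]\ge c_0/4$, i.e.\ an $H$-crossing landing in a \emph{short} interval of length $2\alpha_{2s/3}$. This short interval fits inside the side of a $(2s/3)\times(2s/3)$ square, whose $X$-crossing $\Xc_{2s/3}(\alpha_{2s/3})$ (now from (P1) at the smaller scale) topologically captures it and links two such $H$-crossings. So the hypothesis is used to make (P2) applicable at the target value $\alpha=2\alpha_{2s/3}$, not to align arm regions of two $X$-crossings.

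A secondary issue: your padding step invokes Lemma~\ref{l:cor13}, but that lemma takes $\P[\Cc_{\bar B(s)}]$ as input, not $c_0$, so appealing to it here is circular. The paper does not need any such padding; once one obtains a crossing of two $s\times s$ squares shifted by $\approx s/3$ along a geodesic, iterating that configuration (alternating horizontal/vertical) already crosses $\bar B(2s)$. Your remarks about handling spherical distortion by taking $s^*$ small are fine in spirit and match the paper's approach, but the specific spherical construction in Case~2 (two small squares $abcd$, $a'b'c'd'$ placed on the inner sides of the two large squares, with the translation distance chosen so their outer sides coincide) is where the real work lies and is not addressed in your sketch.
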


The proof of Lemma \ref{lem:Tassion 2.2} is similar to the proof of \cite[Lemma 2.2]{Tas16}, with certain modifications needed to handle the spherical geometry in the case $\mathbb{X} = \mathbb{S}^2$; here we only give a sketch of the argument while explaining in detail the necessary modifications.

\begin{proof}
We consider separately two cases, ~$\alpha_s=s/4$ and $\alpha_s=\phi^{-1}_s(c_0/4)<s/4$, beginning with with the first case. In light of (P1) from Lemma \ref{lem:Tassion 2.1}, we have a lower bound on $\P\brb{\Xc_s(s/4)}$ of the form  $a_{2}\cdot c_{0}^{k_{2}}$. Hence, by the FKG property, it suffices to construct a finite collection of $s \times s$ squares~$S_i$ such that if $\Xc_s(s/4)$ holds for each~$S_i$ then so does $\Cc_{\bar B(2s)}$.

In what follows we refer to the labelling in Figure \ref{fig:gluing two x crossings}, which illustrates the argument in the spherical case. Consider the $s \times s$ square $ABCD$ and its translation $A'B'C'D'$ by $s/2$ along the geodesic $AB$. Observe that if the event $\Xc_s(s/4)$ holds for both squares $ABCD$ and $A'B'C'D'$ then there is a crossing of $\Sc$ inside the union of the squares that intersects the two sub-intervals of the geodesic $AB'$ formed by removing a centred interval of length~$s$. Repeating this construction along the top edge of $\bar B(2s)$ we obtain a crossing of $\bar B(2s)$ using only $X$-crossings of $s \times s$ squares.

We turn to the second case. Since $\alpha_s\le  2\alpha_{2s/3}$ and in light of Lemma \ref{lem:Tassion 2.1}, in this case we have lower bounds on both $\P\brb{\Xc_{2s/3}(\alpha_{2s/3} )} $ and $\P\brb{\Hc_s(0, 2 \alpha_{2s/3} )}$ of the form  $a_{2}\cdot c_{0}^{k_{2}}$. Hence, by the FKG property, it suffices to construct a finite collection of  $s \times s$ squares $S_i$, and $(2s/3) \times (2s/3)$ squares $T_i$, such that if $\Hc_s(0, 2 \alpha_{2s/3})$ and $\Xc_{2s/3}(\alpha_{2s/3} )$ holds for each $S_i$ and $T_i$ respectively, then so does $\Cc_{\bar B(s)}$.

In what follows we refer to the labelling in Figures \ref{fig:joining two crossings}--\ref{fig: two x crossings}, which illustrate the argument in the spherical case. Consider the $s \times s$ square $ABCD$ and its translation $A'B'C'D'$ by a distance~$d$ (to be determined) along the geodesic joining the mid-points of the sides $AD$ and~$BC$. Our aim is to deduce a horizontal crossing of the union of these squares (i.e.\ between $AD$ and $B'C'$) by assuming $\Hc_s(0, 2 \alpha_{2s/3} )$ holds for both the squares, and assuming also $\Xc_{2s/3}(\alpha_{2s/3} )$ holds for two suitably chosen $(2s/3) \times (2s/3)$ squares. In the planar case \cite{Tas16} we may let $d = 4s/3$, since then the shaded region in Figure \ref{fig:joining two crossings} forms a $(2s/3) \times (2s/3)$ square which is sufficient for this purpose. In the spherical case this shaded region is not a square for any choice of translation distance, and so we shall need a slightly different construction.

We consider the $(2s/3) \times (2s/3)$ square $abcd$ such that its `right' side $bc$ lies on $BC$ with its mid-point coinciding with the mid-point of the marked thick interval $fg$ of length $2\alpha_{2s/3}$ (see Figure \ref{fig:joining two crossings new square}). Note that $bc$ is a subset of $BC$ since $\alpha_s \le s/4$ for each $s$, and hence $s/3 + \alpha_{2s/3}$ is at most $s/2$. Once this square is fixed, we consider the unique geodesic which passes through the middle of the side $ad$ of the small square $abcd$ and orthogonal to the geodesic connecting mid-points of $AD$ and $BC$. We define the second $s\times s$ square $A'B'C'D'$ to be the square such that its left side is on this geodesic. The second $(2s/3) \times (2s/3)$ square $a'b'c'd'$ (not shown in Figure \ref{fig:joining two crossings new square}, but magnified in Figure~\ref{fig: two x crossings}) is constructed as the symmetric image of $abcd$ and its left side is on $A'D'$. Observe that the mid-point of $ad$ (marked by a dot) will lie on the marked interval~$e'h'$. We also notice, that since the distance between the mid-points of an $(2s/3) \times (2s/3)$ square is $2s/3+O(s^3)$ where $O(s^3)$ term depends on $s$ only, the square $A'B'C'D'$ is a copy of $ABCD$ shifted by $d = s/3+O(s^3)$. In particular, there is $s^*$ such that for all $s<s^*$ it holds that $d \in ( s/4, s/2)$.

\begin{figure}[t]
\centering
\includegraphics[width=0.48\textwidth]{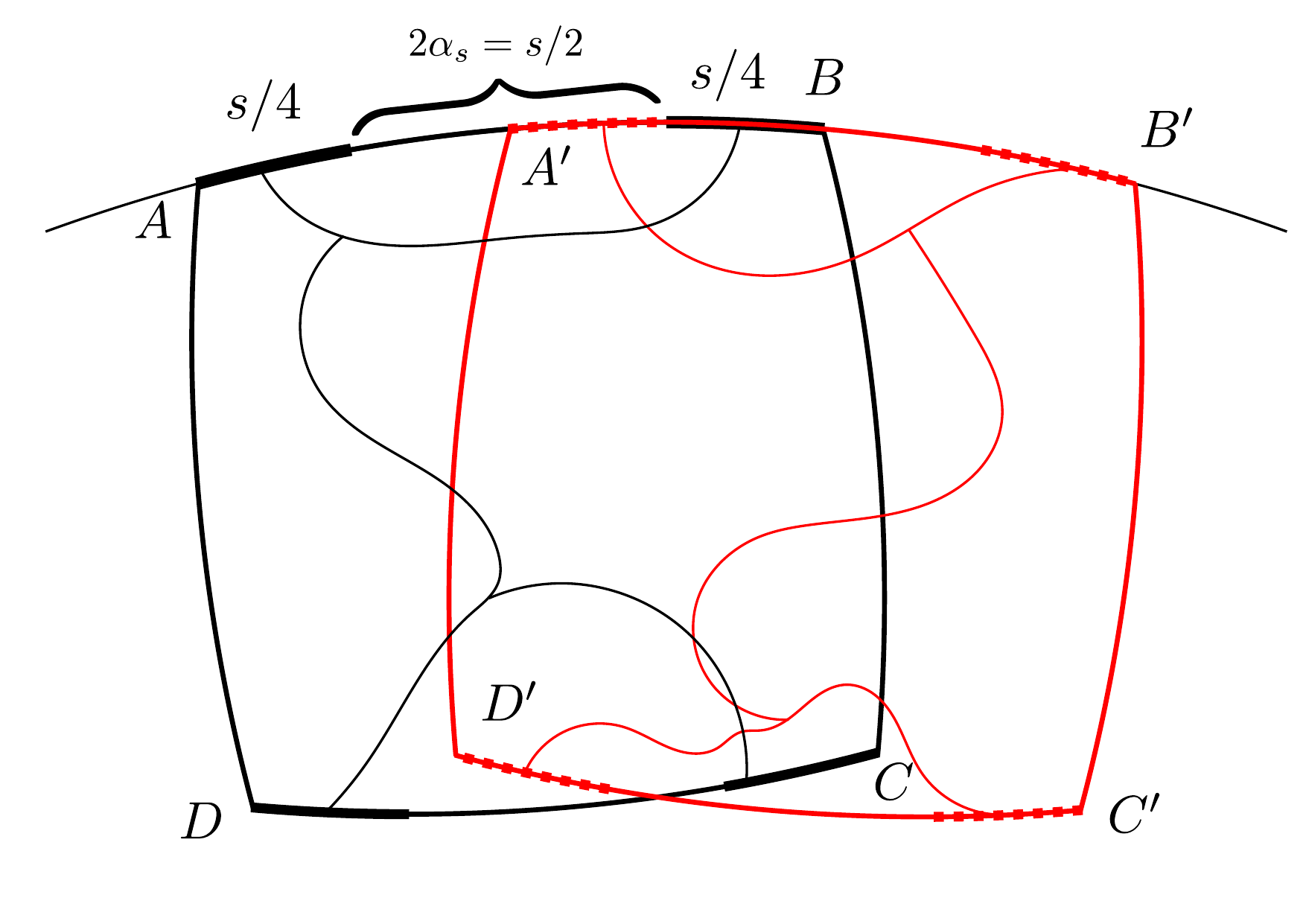}
\includegraphics[width=0.48\textwidth]{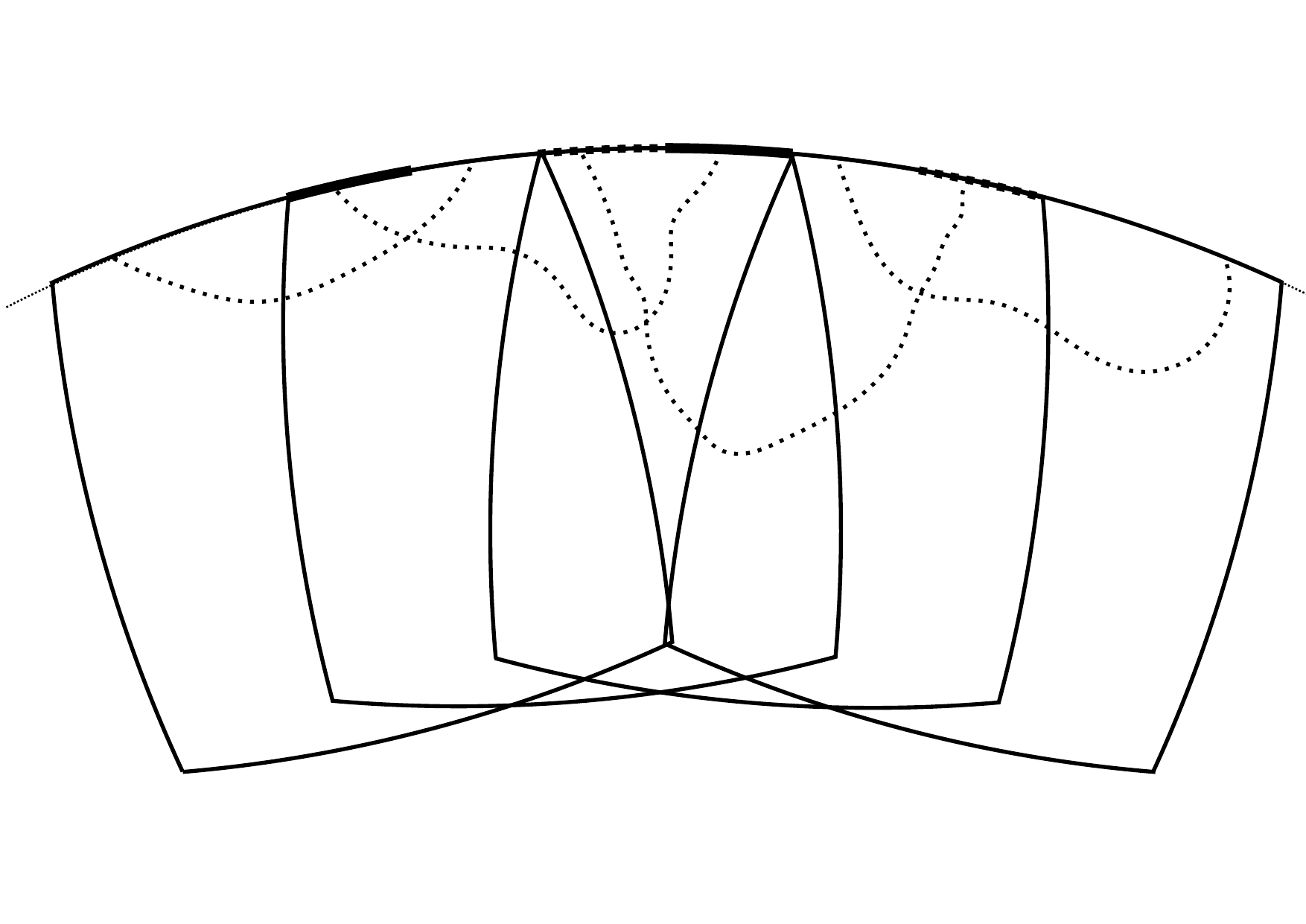}
\caption{Left: The red $s \times s$ square $A'B'C'D'$ is a translation of the black square $ABCD$ by $s/2$ along the side $AB$. If the event $\Xc_s(s/4)$ holds in both of these squares (indicated by the black/red connecting sets), then there is a connection between the top-left and top-right ends of $AB'$. Right: Repeating the construction we obtain an arbitrary long crossing which is $s$-close to a given geodesics.}
\label{fig:gluing two x crossings}
\end{figure}

\begin{figure}[h]
\centering
\includegraphics[width=0.5\textwidth]{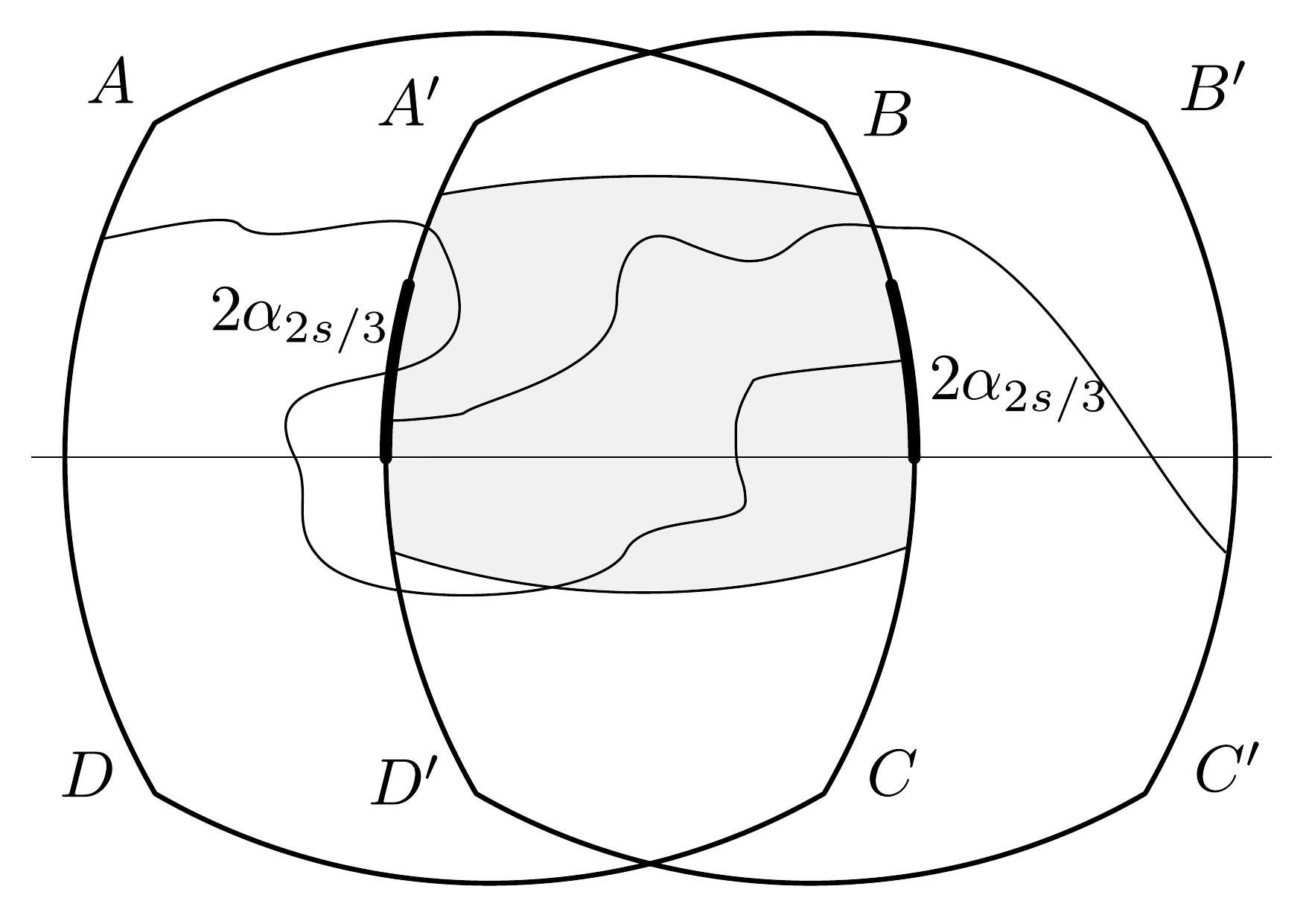}
\caption{An $s\times s$ square $ABCD$ and its translation $A'B'C'D'$ along the line joining the mid-points of $AD$ and $BC$ by a distance $d > 0$. In the planar case, if $d=4s/3$ then the shaded area is a $(2s/3) \times (2s/3)$ square; this construction was used in \cite{Tas16} to connect horizontal crossings of the $s \times s$ squares via an $X$-crossing of the shaded region. In the spherical case, the shaded area is not a square for any choice of $d$, so we use a different construction to connect horizontal crossings of the large squares.}
\label{fig:joining two crossings}
\end{figure}

\begin{figure}[h]
\centering
\includegraphics[width=0.5\textwidth]{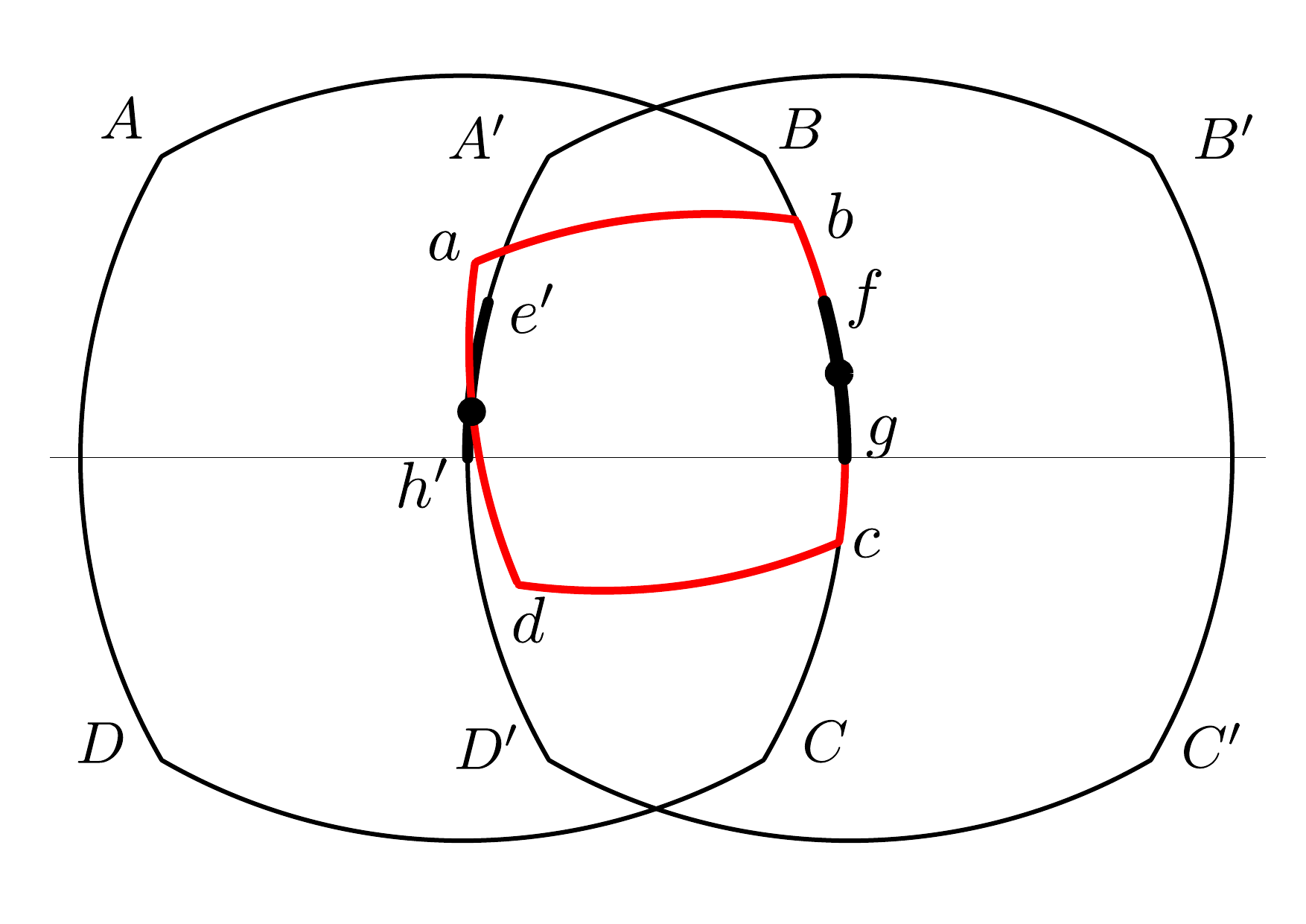}
\caption{After the small square $abcd$ is fixed, the translation distance is chosen such that the the left side of the large square $A'B'C'D'$ intersects the mid-point of $ad$.}
\label{fig:joining two crossings new square}
\end{figure}

Next let us consider the two small squares $abcd$ and $a'b'c'd'$, shown in more detail in Figure~\ref{fig: two x crossings}. We mark the middle parts of length $2\alpha_{2s/3}$ on `vertical' sides of both small squares. Two of these marked intervals $fg$ and $e'h'$ are the marked intervals on Figures \ref{fig:joining two crossings} and \ref{fig:joining two crossings new square}. As mentioned above, the intervals $eh$ and $e'h'$ intersect. By symmetry, the intervals $fg$ and $f'g'$ intersect as well. This implies that  any  curve in $abcd$ connecting $ae$ to $cg$ must intersect $a'h'$ and $c'f'$ and thus disconnect $h'd'$ from $b'f'$ inside $a'b'c'd'$. This shows that if $\Xc_{2s/3}(\alpha_{2s/3})$ holds for both squares, then the connecting curves must intersect.

We also notice that a curve connecting $a'e'$ with $h'd'$ separates $e'h'$ from the right side of the $s \times s$ square $A'B'C'D'$. Similarly, a curve connecting $bf$ and $cg$ separates $fg$ from $AD$. This implies that in the event that there are crossings from $AD$ to $fg$, from $e'h'$ to $B'C'$, and two X-crossings in $abcd$ and $a'b'c'd'$ there is a crossing connecting $AD$ to $B'C'$.

All in all, we infer a horizontal crossing of the union of the $s \times s$ squares (i.e.\ between $AD$ and $B'C'$) that are translated a distance $d \in (s/4, s/2)$ apart. We finish the proof of Lemma~\ref{lem:Tassion 2.2} by using a similar construction to the one in the proof of Lemma \ref{l:cor13}, using multiple copies of such a crossing (i.e.\ alternating `horizontally' and `vertically') and as long as $s^\ast$ is sufficiently small, to infer a crossing of $\bar B(2s)$.
\end{proof}

\begin{figure}
\centering
\includegraphics[width=0.5\textwidth]{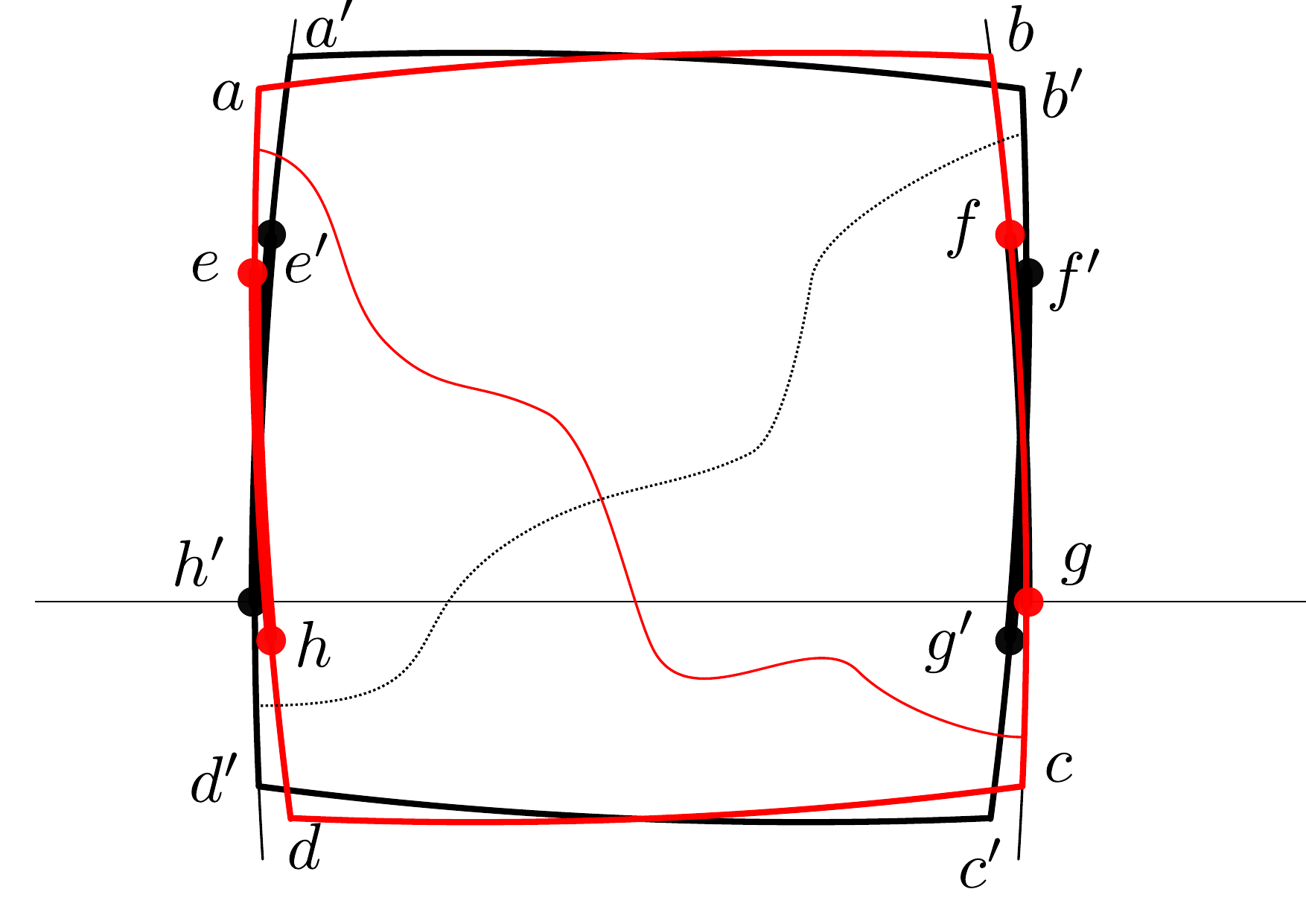}
\caption{Two $(2s/3) \times (2s/3)$ squares with marked intervals. The right side of the red square $abcd$ is on the right side of the left $s\times s$ square $ABCD$, the left side of the black square is on the left side of the right square $A'B'C'D'$. Any curve connecting $ae$ with $cg'$ inside the red square $abcd$ must intersect a curve connecting $h'd'$ with $b'f'$ inside the black square $a'b'c'd'$.
}
\label{fig: two x crossings}
\end{figure}

\begin{lemma}[Good scales imply annular crossings on larger scales; c.f. {\cite[Lemma 3.1]{Tas16}}]
\label{lem:Tassion 3.1}
There exists a sufficiently small $s^* > 0$ and absolute numbers $a_{3}>0$ and $k_{3}\in \mathbb{N}$, such that if, for some $s$ and $t$ such that $12 s \le t < s^*$, $\alpha_s \le 2\alpha_{2s/3}$ and $\alpha_t \le s$ both hold, then $\P(\Cc_{A_{t, 6t}}  ) > a_{3}\cdot c_{0}^{k_{3}}$.
\end{lemma}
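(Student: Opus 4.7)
The plan is to construct a circuit in the annulus $A_{t, 6t}$ as the union of a bounded number of `basic' crossing events and then to apply the FKG property inside $X$. The basic events are of two kinds: $\Hc_t(0,s)$-crossings of $t \times t$ squares arranged in chains along the midline of each side of the annulus, and $s$-scale `connectors' (annulus circuits) that splice consecutive $H$-crossings together and connect the four chains at the corners of the annulus. The hypothesis $\alpha_t \le s$ supplies the $t$-scale $H$-crossings, while the hypothesis $\alpha_s \le 2\alpha_{2s/3}$, through Lemma \ref{lem:Tassion 2.2} and Lemma \ref{l:cor13}, supplies the $s$-scale connectors.

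Quantitatively, from $\alpha_s \le 2\alpha_{2s/3}$ and Lemma \ref{lem:Tassion 2.2} we get $\P[\Cc_{\bar B(2s)}] \ge a_2 c_0^{k_2}$, and Lemma \ref{l:cor13} applied at scale $2s$ then yields uniform lower bounds of the form $a' c_0^{k'}$ on the probabilities of crossings of every box in $\rm{Box}_{X; c}(6s)$ for any fixed $c$, and on the probabilities of circuits in every annulus in $\rm{Ann}_{X; 6; 6}(2s)$. For the $t$-scale input, $t \ge 12s$ together with $\alpha_t \le s$ implies $\alpha_t \le t/12 < t/4$, so property (P2) of Lemma \ref{lem:Tassion 2.1} applies at scale $t$ and gives $\P[\Hc_t(0, \alpha_t)] \ge c_0/4$; by the monotonicity of $\alpha \mapsto \P[\Hc_t(0, \alpha)]$, we also have $\P[\Hc_t(0, s)] \ge c_0/4$. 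Thus every $t \times t$ square admits, with probability at least $c_0/4$, an $H$-crossing reaching a prescribed $s$-scale interval on the opposite side.

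For the geometric step, I would decompose $A_{t, 6t}$ into four trapezoidal side regions and four corner regions. Since the annular thickness $5t/2$ exceeds $t$ and the midline of the annulus is a square of side $7t/2 < 4t$, a chain of at most five $t \times t$ boxes, with consecutive boxes sharing an edge, fits along the midline of each side. In each box in the chain I select an $\Hc_t(0,s)$-crossing oriented so that its narrow $s$-scale endpoint lies on the edge shared with an adjacent box or a corner region, and at every shared edge I place an annulus from $\rm{Ann}_{X; 6; 6}(2s)$ whose inner square of side $2s$ contains the central $s$-interval of that edge. Because the $\Hc_t(0,s)$-crossings on either side of the shared edge each have an endpoint strictly inside the inner square of the connector annulus, each must cross any circuit in that annulus; the circuit therefore splices the two $H$-crossings into a single connected curve spanning both boxes. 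The four corners of $A_{t, 6t}$ are handled in the same spirit by one further $s$-scale annular circuit at each corner, using an extra $\Hc_t$-crossing in the relevant boxes so that the appropriate $s$-scale endpoints sit inside the corner annulus; the union of all the $H$-crossings and annular circuits then contains a circuit in $A_{t, 6t}$.

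Taking $s^*$ small enough that the whole construction lies inside $B(\delta_0) \subseteq X$, on which the FKG property holds, the joint probability of this (bounded) collection of basic events is at least the product of their individual probabilities; each such probability is at least $a c_0^k$ for absolute constants $a, k$, so we obtain $\P[\Cc_{A_{t, 6t}}] \ge a_3 c_0^{k_3}$ for absolute $a_3, k_3$. The hard part is the geometry: in the spherical case one must check that the $t \times t$ boxes can be arranged along the midline of each side with the intended shared edges, and that the $2s$-scale connector annuli straddle these edges with the $s$-scale endpoints strictly inside their inner squares, despite the failure of exact parallelism and translation on $\mathbb{S}^2$. As in the proof of Lemma \ref{lem:Tassion 2.2}, this is accommodated by further decreasing $s^*$ so that spherical distortions inside $B(\delta_0)$ are small compared to the $s$-scale slack built into the construction.
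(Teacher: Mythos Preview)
Your inputs are set up correctly: from $\alpha_s \le 2\alpha_{2s/3}$ you obtain $s$-scale box and annulus crossings via Lemmas \ref{lem:Tassion 2.2} and \ref{l:cor13}, and from $\alpha_t \le s < t/4$ property (P2) gives $\P[\Hc_t(0,s)] \ge c_0/4$ directly. The problem is the chaining step. In a chain $B_1, B_2, \ldots$ of $t \times t$ boxes sharing edges, each interior box carries a single $\Hc_t(0,s)$-crossing whose narrow $s$-endpoint lies on \emph{one} of its two shared edges and whose full-side endpoint lies on the other. At a shared edge $e$ between $B_i$ and $B_{i+1}$, therefore, only one of the two adjacent $H$-crossings is guaranteed to land in the central $s$-interval of $e$; the other meets $e$ at an uncontrolled point along its full length $t$, possibly well outside the $12s$-wide connector annulus you place there. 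Your claim that ``the $\Hc_t(0,s)$-crossings on either side of the shared edge each have an endpoint strictly inside the inner square of the connector annulus'' thus fails, and the splice does not go through for chains of length greater than two. Using two oppositely oriented $H$-crossings in each interior box does not rescue this, since those two crossings need not meet each other.

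The paper sidesteps the issue by not chaining at scale $t$ at all. It takes exactly two $t \times t$ squares $S_1, S_2$ sharing a common side $\gamma'$ with the origin as its midpoint, each carrying an $\Hc_t(0,s)$-crossing whose narrow endpoint lies on $\gamma'$, together with a single circuit in $A_{2s,12s}$ centred at the origin. The circuit connects the two $H$-crossings into one crossing of $S_1 \cup S_2$, hence of $\bar B(t)$. Lemma \ref{l:cor13}, applied at scale $t$, then converts the bound on $\P[\Cc_{\bar B(t)}]$ directly into the required bound on $\P[\Cc_{A_{t,6t}}]$. The point is that once $\bar B(t)$ is in hand, all the gluing needed to form the annulus circuit is carried out by the standard theory of Lemma \ref{l:cor13} using full $t$-scale box crossings, where both endpoints are full sides and no $s$-scale control is needed; the single $s$-scale connector at the origin is the only place where the narrow-endpoint information is exploited.
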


\begin{proof}
Since $\alpha_s \le 2\alpha_{2s/3}$, Lemma \ref{lem:Tassion 2.2} yields a lower bound on $\P[\Cc_{\bar B(2s)}] $ of the form $a_{3}\cdot c_{0}^{k_{3}}$. By Lemma \ref{l:cor13} we then conclude the same for $\Cc_{A_{2s, 12s}}$. Since also $\P\brb{\Hc_t(0, s )} \ge c_0 / 4$ (implied by $\alpha_t \le s$), by the FKG property it suffices to find two $t \times t$ squares $S_1$ and $S_2$ such that if $\Hc_t(0, s)$ holds for $S_1$ and $S_2$, and $\Cc_{A_{2s, 12s}}$ also holds, then we may deduce $\Cc_{\bar B(t)}$.

The proof is identical to \cite[Lemma 3.1]{Tas16}, and we only briefly sketch it. Consider the two $t \times t$ squares $S_1 = (D_1; \gamma_1, \gamma')$ and $S_2 = (D_2; \gamma_2, \gamma')$ whose common side $\gamma'$ has the origin as its mid-point. Observe that if $\Cc_{A_{2s, 12s}}$ holds simultaneously with the event $\Hc_t(0, s )$ for $S_1$ and~$S_2$, then there exists a crossing of $S_1 \cup S_2$; for this observe that the distance between mid-points of a $s \times s$ square is at least $s$ (in the spherical case it is precisely $\arccos(1-2\tan^2(s/2)) = s+\frac{1}{8}s^3+O(s^5)> s$), and so the line-segment of length $s$ in the definition of $\Hc_t(0, s )$ lies inside the inner square bounding $A_{2s, 12s}$. Since such a crossing of $S_1 \cup S_2$ also implies a crossing of two squares that are translated by any smaller amount along the geodesic joining the mid-points of the opposite sides of $S_1$ and $S_2$, we infer a crossing of~$\bar B (t)$. Finally, from Lemma \ref{l:cor13} we deduce the statement.
\end{proof}

To state the final lemma in Tassion's argument, we need a certain assumption that is related to condition (6) of Theorem \ref{t:rsw}, stated for a fixed $n \in \mathbb{N}$.

\begin{assumption}
\label{a:rsw}
For a quadruple $(c, \varepsilon, C, s)$ with $c > 0, \varepsilon \in (0, 1)$, $C \ge 1$ and $s > 0$, we assume that the following holds: If
\[   \inf_{A \in \rm{Ann}_{X; C, 6}(s)}  \P(\Cc_A(\Sc_n)  ) > c  , \]
then, for each $s \times Cs$ annulus $A \subseteq X$,
\[ \P( \Cc_A)>1- \varepsilon. \]
\end{assumption}

It is clear that if Assumption \ref{a:rsw} is valid for a quadruple $(c, \varepsilon, C, s)$, then it is also valid for the quadruple $(c', \varepsilon', C, s)$ for any $c' > c$ and $\varepsilon' > \varepsilon$. For the final two lemmas, we let $a_3$ and~$k_3$ be the constants proscribed by Lemma \ref{lem:Tassion 3.1} and fix
\begin{equation}
\label{eq:c3 def a3k3}
c_3 = a_3 \cdot c_0^{k_3}.
\end{equation}

\begin{lemma}[Good scales imply larger good scales; c.f. {\cite[Lemma 3.2]{Tas16}}]
\label{lem:Tassion 3.2}
Fix $C > 1$. Then there exist a number $C_1 > 12$, depending only on $C$, and a sufficiently small $s^* > 0$, such that if $s <s^*$, $\alpha_s \le 2\alpha_{2s/3}$ and Assumption \ref{a:rsw} holds for the quadruple $(c_3,c_0/ 8, C, 12s)$, then there exists a number $t\in [12 s, C_1 s]$ such that $\alpha_t \le 2\alpha_{2t/3}$.
 \end{lemma}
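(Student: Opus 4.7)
I will argue by contradiction following \cite[Lemma 3.2]{Tas16}, with the thick-annulus conclusion of Assumption~\ref{a:rsw} playing the role that strong independence plays in \cite{Tas16}. Suppose that $\alpha_t > 2\alpha_{2t/3}$ for every $t \in [12s, C_1 s]$. Setting $t_k = 12s(3/2)^k$ and $K = \lfloor \log_{3/2}(C_1/12)\rfloor$, and iterating this hypothesis along the geometric sequence, I obtain $\alpha_{t_{k+1}} > 2\alpha_{t_k}$ for $0 \le k < K$, and hence $\alpha_{t_k} > 2^k \alpha_{12s}$; thus $\alpha$ at least doubles per $(3/2)$-step along the sequence. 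The plan is to derive a contradiction for $C_1$ (equivalently, $K$) chosen large enough in terms of $C$.

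\textbf{Annular crossings at every scale.} For each $t_k$ with $\alpha_{t_k} \le s$, Lemma~\ref{lem:Tassion 3.1} applies (using the hypothesis $\alpha_s \le 2\alpha_{2s/3}$ as its good-scale input) and yields $\P(\Cc_{A_{t_k, 6t_k}}) > c_3$. By the symmetry of the random set and a standard interpolation between consecutive annuli $A_{t_k, 6t_k}$ and $A_{t_{k+1}, 6t_{k+1}}$, using their overlap (since $t_{k+1}/t_k = 3/2$ and the aspect ratio is $6$) and the FKG property, this lower bound extends to $\P(\Cc_{A_{a, 6a}}) > c_3$ uniformly for $a$ in the continuous interval $[12s, 2Cs]$. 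This verifies the hypothesis of Assumption~\ref{a:rsw} for the quadruple $(c_3, c_0/8, C, 12s)$, whose conclusion yields $\P(\Cc_A) > 1 - c_0/8$ for every $12s \times 12Cs$ annulus $A \subseteq X$.

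\textbf{Lower bound on $\alpha_{12s}$ and contradiction.} Using this high-probability thick-annulus circuit together with the reflection symmetries of the random set (great-circle reflections on $\mathbb{S}^2$, axial reflections on $\mathbb{T}^2$) and the FKG property, I will derive a quantitative lower bound $\alpha_{12s} \ge c_4 s$ for an absolute constant $c_4 > 0$; the rough idea is that a near-certain circuit surrounding the inner $12s \times 12s$ box allows H-crossings of that box to be ``reflected'' so as to reach far from the midpoint of the opposite side, forcing $\phi_{12s}(\alpha) \le c_0/4$ for all $\alpha$ up to an absolute fraction of $s$. Combining this with the doubling gives $\alpha_{t_K} > 2^K c_4 s$, while the universal bound $\alpha_{t_K} \le t_K/4 = 3s(3/2)^K$ forces $(4/3)^K \le 3/c_4$, bounding $K$. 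Should instead some $\alpha_{t_k}$ exceed $s$ within the range $[12s, 2Cs]$ (so that the interpolation above might fail), one may argue directly: the doubling started from $\alpha_{t_k}>s$ combined with $\alpha_{t_K} \le t_K/4$ again bounds $K$, this time in terms of $C$. Taking $C_1 = 12(3/2)^K$ larger than the maximum of these two thresholds yields the desired contradiction.

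\textbf{Main obstacle.} The delicate step is the lower bound $\alpha_{12s} \ge c_4 s$: transforming the high-probability thick-annulus circuit into a quantitative constraint on $\phi_{12s}$ requires a careful reflection/FKG construction, and in the spherical case it is further complicated by the fact that squares of different scales have different interior angles. The necessary modifications are analogous in spirit to those already appearing in the proof of Lemma~\ref{lem:Tassion 2.2}, where the classical planar construction of \cite{Tas16} had to be redesigned to accommodate the spherical geometry.
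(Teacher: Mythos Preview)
Your overall architecture --- contradiction, geometric doubling of $\alpha$ along the sequence $t_k$, and sublinearity $\alpha_t \le t/4$ to cap the number of steps --- is the same as in the paper. The ``interpolation'' in your second paragraph is not needed: Lemma~\ref{lem:Tassion 3.1} applies at every continuous $t \ge 12s$ with $\alpha_t \le s$ (not just at your discrete $t_k$), so under the hypothesis $\alpha_a \le s$ for all $a$ in the relevant range you get $\P(\Cc_{A_{a,6a}}) > c_3$ directly.

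The genuine gap is your ``main obstacle'', and your proposed resolution is at the wrong scale. You want to force $\alpha_{12s} \ge c_4 s$ using a high-probability circuit in the thick annulus that \emph{surrounds} the $12s \times 12s$ square. But an $H$-crossing of that square lives entirely \emph{inside} it, while the circuit lives entirely \emph{outside}; there is no interaction that constrains $\phi_{12s}$. Reflection across a side of the $12s$ square does not help either, since the reflected crossing lands in the annulus without forcing anything about $\Hc_{12s}(\alpha, 6s)$.

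The paper's fix is to go to the \emph{larger} scale $t_2 = 24Cs$, chosen precisely so that the thick $12s \times 12Cs$ annulus $A$ fits \emph{inside} a $t_2 \times t_2$ square $S$ (placed with one side of $S$ on a geodesic bisecting $A$). Then the event $E = \Hc_{t_2}(0,s) \setminus \Hc_{t_2}(s, t_2/2)$ is incompatible with $\Cc_A$: a circuit in $A$ would intersect any crossing of $S$ landing in the length-$s$ interval on the right side and carry it up past that interval, contradicting the ``$\setminus$'' clause. Since $\P(E) \ge \phi_{t_2}(s) \ge c_0/4$ whenever $\alpha_{t_2} \le s$ (by (P2)), and $\P(\Cc_A) > 1 - c_0/8$ by Assumption~\ref{a:rsw}, one concludes $\alpha_{t_2} > s$. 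This single clean incompatibility replaces your reflection/FKG step entirely, and anchors the doubling/sublinearity argument exactly as you intended --- just from $t_2 = 24Cs$ rather than from $12s$.
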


\begin{proof}
Suppose $s < s^*$ and $\alpha_s \le 2\alpha_{2s/3}$. The first step is to show that $\alpha_{t_i} > s$ for at least one of $t_1 = 12 s$ or
\begin{equation}
\label{eq:t2 def}
t_2 = 2 C t_1 = 24 C s.
\end{equation}
Let $A$ be a $t_1 \times 12 C t_1$ annulus. Arguing by contradiction, if $\alpha_{t_1} \le s$, then from Lemma \ref{lem:Tassion 3.1} we deduce that
\[  \inf_{A \in \rm{Ann}_{X; C; 6}(t_1) }  \mathbb{P}( \Cc_A ) > c_3 .\]
Hence, since we make Assumption \ref{a:rsw} for the quadruple $(c_3,c_0/8, C, t_1)$, it holds that $$\P( \Cc_{A} )>1-c_0/8.$$

On the other hand, let $S$ be a $t_{2}\times t_{2}$ square whose centre coincides with the centre of~$A$ and such that one side of $S$ lies on a geodesic bisecting~$A$. Define the event $E=\Hc_{t_2}(0,s)\setminus \Hc_{t_2}(s, t_2/2)$ for square $S$, and remark that the occurrence of the event $E$ implies that $\Cc_{A}$ does not occur (see Figure \ref{fig: crossing and annulus}). Recalling the definition \eqref{eq:phisa def} of $\phi_{s}(\alpha)$ it is clear that $\P(E)\ge \phi_{t_2}(s)$. If also $\alpha_{t_2}\le s$, this implies $\alpha_{t_{2}} < t_{2}/4$ by \eqref{eq:t2 def}, and
 by (P2) of Lemma~\ref{lem:Tassion 2.1} we have $\phi_{t_2}(s) \ge c_0/4$. Since $E \subseteq \Cc_{A}^c$, this shows that $\P( \Cc_{A})$ is at most $1 - c_0/4$, which is a contradiction.

To conclude the proof of Lemma \ref{lem:Tassion 3.2}, recall that $\alpha_s$ is sub-linear in the sense that $\alpha_s < s$ for all $s > 0$. Hence if $\alpha_{t_i} > s$ for at least one of $t_1 = 12s$ or $t_2 =24 C s$, then there exist sufficiently large $C_1$, depending only on $C$, such that  $\alpha_t \ge  2\alpha_{2t/3}$ for at least one $t\in [12s, C_1 s]$.
\end{proof}

\begin{figure}
\centering
\includegraphics[width=0.5\textwidth]{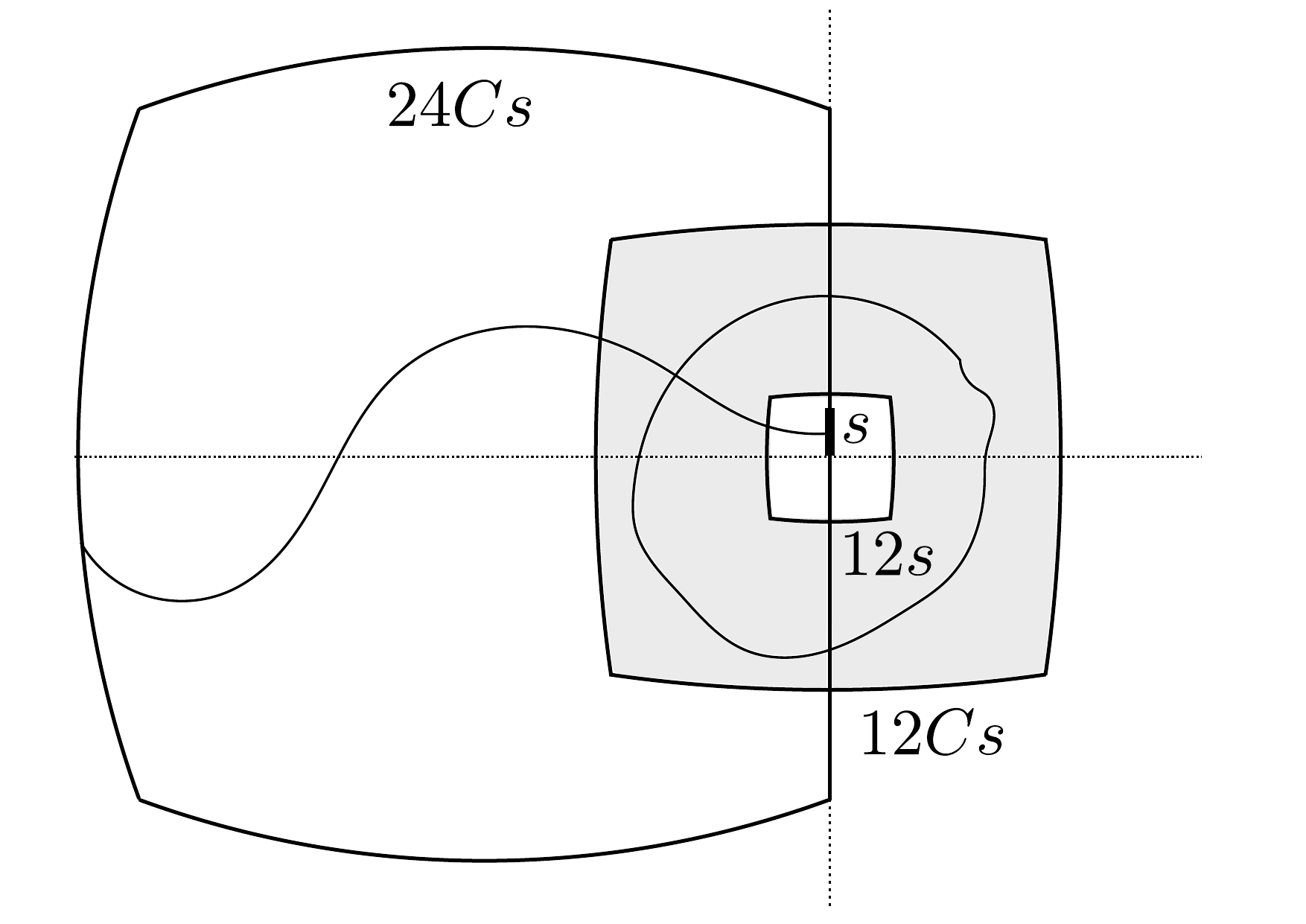}
\caption{A crossing from the left side of the white square to a small interval of length $s$ on the right side (event $\mathcal{A}$) intersects with a circuit in the grey annulus (event $\mathcal{B}$), which implies a crossing from the left side of the white square to the part of the right side lying above the interval (event $\mathcal{C}$). Hence, if the event $\mathcal{A} \setminus \mathcal{C}$ occurs, then the event $\mathcal{B}$ does not. The labels refer to the sizes of the squares in the proof of Lemma \ref{lem:Tassion 3.2}. }
\label{fig: crossing and annulus}
\end{figure}

To conclude this section we combine the preceding lemmas into a form most convenient for completing the proof of Theorem \ref{t:rsw}.

\begin{corollary}
\label{c:rsw}
Fix the constants $C > 0$ and $\bar c_1, \bar c_2 > 0$. Then there exists a sufficiently small $s^* > 0$ and numbers $a_{4}>0$, $k_{4}\in \mathbb{N}$ and $C_1 > 0$, depending only on $C$, $\bar c_1$ and $\bar c_2$, such that, if $s <s^*$, $\alpha_s > \bar c_1 s$, and Assumption \ref{a:rsw} holds for the quadruple $( c_3, c_0/ 8, C, t)$ for all $t > s$, then
   \[  \inf_{s' > C_1 s} \inf_{ B \in \rm{Box}_{X; \bar c_2}( s')  } \mathbb{P}(\Cc_B)  > a_4 \cdot c_0^{k_4}  . \]
\end{corollary}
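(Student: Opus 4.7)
The plan is a three-stage argument combining lemmas \ref{lem:Tassion 2.2}, \ref{lem:Tassion 3.2}, and \ref{l:cor13}. First, I will locate an initial ``good scale'' $s_0^\ast$ (meaning one satisfying $\alpha_{s_0^\ast}\le 2\alpha_{2s_0^\ast/3}$) comparable to $s$. Second, I will iterate Lemma \ref{lem:Tassion 3.2} to propagate good scales upward. Third, I will turn these good scales into lower bounds on box crossings at every sufficiently large scale.

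For the initial good scale, consider the geometric sequence $s_k=(3/2)^k s$. The claim is that one of $s_1,\dots,s_K$ is good, for some $K=K(\bar c_1)$. Indeed, if none were good then $\alpha_{s_k}>2\alpha_{s_{k-1}}$ for every $k\le K$, which iterates to $\alpha_{s_K}>2^K\alpha_s>2^K\bar c_1 s$; combined with the a priori bound $\alpha_{s_K}\le s_K/4=(3/2)^K s/4$ this forces $(4/3)^K<1/(4\bar c_1)$, bounding $K$ by an explicit function of $\bar c_1$. Letting $s_0^\ast$ be the smallest good scale found this way, one has $s\le s_0^\ast\le M s$ with $M=(3/2)^K$ depending only on $\bar c_1$; in particular I can take $s^\ast$ small enough that $M s<s^\ast$, so that the preceding lemmas apply at $s_0^\ast$.

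Next, iterating Lemma \ref{lem:Tassion 3.2} produces a sequence of good scales $s_0^\ast<s_1^\ast<s_2^\ast<\cdots$ with $s_{i+1}^\ast/s_i^\ast\in[12,\widetilde C_1]$, where $\widetilde C_1=\widetilde C_1(C)$ is the constant provided by that lemma (distinct from the $C_1$ in the corollary statement). Each iteration is legitimate because the standing hypothesis supplies Assumption \ref{a:rsw} at each required quadruple $(c_3,c_0/8,C,12 s_i^\ast)$, using that $12 s_i^\ast\ge 12 s>s$. At each good scale $s_i^\ast$, Lemma \ref{lem:Tassion 2.2} yields $\mathbb{P}[\Cc_{\bar B(2 s_i^\ast)}]>a_2 c_0^{k_2}$. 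Setting $c=\widetilde C_1\max(1,\bar c_2)$, an application of Lemma \ref{l:cor13} to the box $\bar B(2 s_i^\ast)$ gives
\[
\inf_{B\in\rm{Box}_{X;c}(6 s_i^\ast)}\mathbb{P}(\Cc_B)>f_c(a_2 c_0^{k_2})=:a_4 c_0^{k_4},
\]
since the function $f_c$ produced by the FKG gluing in the proof of Lemma \ref{l:cor13} is of polynomial form $f_c(\varepsilon)=\varepsilon^{N_c}$ with $N_c$ depending only on $c$.

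Finally, I will verify that every $B\in\rm{Box}_{X;\bar c_2}(s')$ with $s'\ge C_1 s:=6M s$ lies in $\rm{Box}_{X;c}(6 s_i^\ast)$ for some $i$. Taking $i$ to be the largest index with $6 s_i^\ast\le s'$, which exists because $s'\ge 6 s_0^\ast$, maximality gives $s_{i+1}^\ast>s'/6$, so $s_i^\ast\ge s'/(6\widetilde C_1)$; both sides of $B$ then lie in $[6 s_i^\ast,6\bar c_2\widetilde C_1 s_i^\ast]\subseteq[6 s_i^\ast,6 c s_i^\ast]$, as needed. The main obstacle I anticipate is precisely this final gap-filling step: the ranges of boxes covered at consecutive good scales must together exhaust every size larger than $6 s_0^\ast$ and every aspect ratio up to $\bar c_2$, and this geometric constraint is what forces the enlargement $c=\widetilde C_1\max(1,\bar c_2)$ rather than the more naive choice $c=\bar c_2$ (a factor $\bar c_2$ would cover the aspect ratios but not bridge the multiplicative gap $\widetilde C_1$ between consecutive good scales).
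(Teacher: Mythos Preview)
Your proposal is correct and follows essentially the same three-stage route as the paper: locate an initial good scale from the hypothesis $\alpha_s>\bar c_1 s$ and the a priori bound $\alpha_t\le t/4$, propagate good scales upward via Lemma~\ref{lem:Tassion 3.2}, and convert good scales into box-crossing bounds via Lemmas~\ref{lem:Tassion 2.2} and~\ref{l:cor13}. Your version is more explicit than the paper's in two places---the geometric-sequence argument pinning down $K(\bar c_1)$, and the gap-filling verification that the ranges $\rm{Box}_{X;c}(6s_i^\ast)$ cover every admissible $s'$---but the underlying ideas are identical.
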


\begin{proof}
In light of lemmas \ref{l:cor13} and \ref{lem:Tassion 2.2}, it suffices to exhibit constants $C_1, C_2 > 0$, depending only on  $C$ and $\bar c_1$, and a sequence of `good' scales $\{s^{(i)}\}_{1 \le i \le k}$ such that
\[ s^{(1)} < C_1 s/ 6 \ , \quad 12 \le  s^{(i+1)} / s^{(i)}  \le C_2  \quad \text{and} \quad  12 s^{(k)} \le s^*  ,\]
and such that $\alpha_{s^{(i)} } \le 2\alpha_{2s^{(i)} /3}$  holds for each $1 \le i \le k$.
We argue by induction. For the base case, we argue as in the proof of Lemma \ref{lem:Tassion 3.2}: since $\alpha_s > \bar c_1 s$ and $\alpha_s$ is sub-linear (in the sense that $\alpha_s \le s/4$ for all $s$), there exists a sufficiently large $C_1$, depending only on $C$ and $\bar c_1$, such that $\alpha_t \le  2\alpha_{2t/3}$ for at least one $t\in [12s, C_1 s/6]$.  Next suppose we have a scale $s^{(i)}$ such that $  s^{(i)} < s^*$ and $\alpha_{s^{(i)} } \le  2\alpha_{2s^{(i)} /3}$. We may suppose that Assumption \ref{a:rsw} holds for the quadruple $(c_3, c_0/ 8, C, 12 s^{(i)} )$. Hence by Lemma \ref{lem:Tassion 3.2} there exists a number $t \in [12  s^{(i)} , 12 C  s^{(i)} ]$ such that $\alpha_t \le 2\alpha_{2t/3}$, which concludes the induction step, and thus also Corollary \ref{c:rsw}.
\end{proof}

\subsection{Concluding the proof of Theorem \ref{t:rsw}}

Fix $s^* > 0$ to be sufficiently small such that the conclusions of Lemma \ref{l:cor13} and Corollary \ref{c:rsw} are valid. Before continuing, we discuss the roles of conditions (1) and (6) of Theorem \ref{t:rsw} in ensuring that the conclusion of Corollary~\ref{c:rsw} holds on all necessary scales and is uniform for sufficiently large $n$.

We first claim that \eqref{e:nondegenrsw} in condition (1) implies that, for each $C > 0$, $\alpha_{C s_n} / s_n$ is uniformly bounded from below. In fact, we prove the stronger statement that $\alpha_{C s_n} > r s_n$ for any $r \in (0, C/4)$ such that,
\begin{equation}
\label{e:rsn}
   \mathbb{P}(   \mathcal{L}_v(r s_n)  \cap \partial S  = \varnothing  )  > 1 - c_0 / 4
   \end{equation}
for all directions $v$ in the spherical case (resp.\ $x$ and $y$ directions in the toral case);
the existence of a single such $r > 0$ for $n$ sufficiently large is then guaranteed by \eqref{e:nondegenrsw}. Similarly to the proof of Lemma \ref{lem:Tassion 3.2}, consider the event $$E=\Hc_{C s_n}(0,r s_n) \setminus \Hc_{C s_n}(r s_n, C s_n)$$ corresponding to a $Cs_n \times Cs_n$ square $S$, and let $L$ denote the line-segment of length $r s_n$ on the boundary of $S$ used to define the event $\Hc_{C s_n}(0,r s_n)$. It is then clear that $\P(E) \ge \phi_{C s_n}(r s_n)$. If we now assume, for contradiction, that \eqref{e:rsn} holds and $\alpha_{C s_n} \le r s_n$, then since $r < C/4$, by (P2) of Lemma~\ref{lem:Tassion 2.1} it must be true that $\phi_{C s_n}(r s_n)  \ge c_0/4$. Since $E$ implies that $\partial S$ intersects~$L$, we have that
\[ \mathbb{P}(   \mathcal{L}_v(r s_n)  \cap \partial S  = \varnothing  ) =  \mathbb{P}( | L \cap \partial S| = \varnothing ) \le 1-  c_0/4 , \]
which is a contradiction.

Next we observe that condition (6) of Theorem \ref{t:rsw} implies Assumption \ref{a:rsw} on all necessary scales. To see why note that condition (6) guarantees the existence, for any choice of $c > 0$ and $\varepsilon > 0$, of constants $C_1, C_2 > 1$  such that, for all sufficiently large $n$ and all $s > C_1 s_n$, Assumption \ref{a:rsw} holds for the quadruple $(c, \varepsilon, C_2, s)$; in particular it also holds for any larger~$c$ and $\varepsilon$ (see the remark immediately after Assumption \ref{a:rsw}). Recall now that
\[  c_0(n) = \inf_{s >0 } \inf_{ B \in \rm{Box}_{X; 1}(s) } \, \mathbb{P}( \mathcal{C}_{B}(\mathcal{S}_n) ) , \]
is bounded from below by some constant $\hat{c}_0$ for sufficiently large $n$; hence, by \eqref{eq:c3 def a3k3},
the same is true for the number $c_3(n)$
prescribed by Lemma \ref{lem:Tassion 3.1}, monotonically increasing in $c_0$. Putting this together, condition (6) guarantees the existence of $C_1, C_2> 0$ such that, for all sufficiently large $n$, Assumption \ref{a:rsw} holds for the quadruple $(c_3(n), c_0(n) / 8, C_2, s)$ for all
$s > C_1 s_n$. At this point we may fix such $C_1, C_2> 0$ and $n$ sufficiently large such that the assumption holds for all $s > C s_n$.

We can now finish the proof of Theorem \ref{t:rsw}. Choose $c > 0$ as in the statement of the RSW estimates. Given the definition of $\rm{Unif}_{X;c}(s)$, and since the FKG property is valid in $X$, it is sufficient to show the existence of a constant $c_1$ such that for sufficiently large $n$,
\begin{equation}
\label{e:rswcon}
  \inf_{s > 0} \,  \inf_{k \in (0, c)} \  \inf_{B \text{ a } s \times ks \text{ box}} \,  \mathbb{P}(\Cc_B(\Sc_n )) > c_1  .
  \end{equation}
In turn, it is sufficient to establish \eqref{e:rswcon} on both the microscopic scales $s \approx s_n$, and then for all larger scales $s \gg s_n$.

For the microscopic scales $s \approx s_n$, recall that, by condition (4) of Theorem \ref{t:rsw}, there exist numbers $\delta>0$ and $c_2>0$ such that, for all sufficiently large $n$,
\begin{equation*}
 \inf_{s < \delta s_n} \,  \inf_{k \in (0, c)} \  \inf_{B \text{ a } s \times ks \text{ box}} \,  \mathbb{P}(\Cc_B(\Sc_n )) > c_2.
  \end{equation*}
By Lemma \ref{l:cor13}, the same conclusion holds for $\delta$ replaced by any constant $C$, i.e.\ there exists a $c_4$, depending on $C$, such that for sufficiently large $n$,
\begin{equation}
\label{e:rswcon2}
 \inf_{s < C s_n} \,  \inf_{k \in (0, c)} \  \inf_{B \text{ a } s \times ks \text{ box}} \,  \mathbb{P}(\Cc_B(\Sc_n )) > c_4 .
 \end{equation}

For the larger scales $s \gg s_n$, take the constant $C_1$ that was fixed above, and recall that $\alpha_{C_1 s_n} / s_n$ is uniformly bound below by some constant $\bar c_1$. Since also Assumption \ref{a:rsw} holds for the quadruple $(c_3(n), c_0(n) / 8, C_2, t)$ for all $ t > C_1 s_n $, by Corollary \ref{c:rsw} there are numbers $a_{4}>0$, $k_{4}\in \mathbb{N}$ and $C_3 > 0$, depending only on $c, \bar{c}_1$, $C_1$ and $C_2$, such that
\[  \inf_{s' > C_3 s_n} \inf_{ B \in \rm{Box}_{X; c}( s')  } \mathbb{P}(\Cc_B) > a_4 \cdot c_0^{k_4}(n) >  a_4 \cdot \hat{c_0}^{k_4},\]
which establishes \eqref{e:rswcon} for $s > C_3 s_n$. Combining with \eqref{e:rswcon2} we conclude the proof.

%%%%%%%%%%%%%%%%%%%%%%%%%%%%%%%%%%%%%%%%%%%%%%%%%%%%%%%%%%%%
%%%%%%%%%%%%%%%%%%%%%%%%%%%%%%%%%%%%%%%%%%%%%%%%%%%%%%%%%%%%

\smallskip
\section{Perturbation analysis}
\label{s:pert}

In this section we establish the auxiliary results used in the perturbation analysis in section~\ref{s:proof}. In the first part we prove Proposition~\ref{p:meas}, showing that crossing events are determined, outside a small error event, by the signs of the field on a (deterministic) finite set of points. In the second part we prove Lemma \ref{l:comp}, which controls the effect of a perturbation on the signs of Gaussian vectors.

 \subsection{Measurability of crossing events on a finite number of points}

We use the following preliminary lemma, which bounds the probability that the nodal set crosses any (geodesic) line-segment twice. Recall that for symmetric covariance kernels we often abuse notation by writing $\kappa_n(x) = \kappa_n(0, x)$.

\begin{lemma}[Two-point estimate of nodal crossings; c.f.\ {\cite[Proposition 4.4]{BM}}]
\label{l:tp}
Let $f$ be a Gaussian random field on $\mathbb{X}$ whose covariance kernel $\kappa$ is $C^{4}$ and is symmetric in the sense of Definition \ref{a:symmetry}. Suppose
that there exists $\delta >0$ such that, for every $x,y\in \Xb$ with $0<d(x,y)<\delta$
the random vector $(f(x),f(y))\in\R^{2}$ is non-degenerate. Define
\[ L_2 =  \sup_{v \in \mathbb{S}^1}    |\kappa''_{v}(0)|   \quad \text{and} \quad L_4 =  \sup_{v \in \mathbb{S}^1} \max_{d(0,y) < \delta} |\kappa^{(iv)}_v(y)| ,  \]
where $\kappa^{(ii)}_v$ and $\kappa^{(iv)}_v$ are the second and the fourth derivatives of $\kappa$ in direction~$v$ respectively.
Then there exists a absolute constant $c > 0$ such that, for each geodesic line-segment $\mathcal{L} \subseteq \mathbb{X}$ of length
$\varepsilon <\delta$,
\[  \mathbb{P}( | \{x \in  \mathcal{L} : f(x) = 0 \} | \ge 2 )  < c  \varepsilon^3 \sqrt{ L_2^3  + L_2^{-1} L_4^2 }   .\]
\end{lemma}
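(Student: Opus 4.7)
I would parametrise the geodesic line-segment $\mathcal{L}$ by arc-length and set $g(t) = f(\gamma(t))$ for $t \in [0, \varepsilon]$, where $\gamma$ is the unit-speed parametrisation. By the rotational (resp.\ translational) invariance and reflection symmetry of $\kappa$ required in Definition \ref{a:symmetry}, the process $g$ is a centred stationary Gaussian process with even $C^4$ covariance $r(h) = \kappa(\gamma(h))$ on $[-\varepsilon, \varepsilon]$; in particular $r'(0) = r'''(0) = 0$. Moreover the Hessian $H_\kappa(0)$, being invariant under rotations by $\pi/2$ (toral case) or the full rotation group (spherical case), is a scalar multiple of the identity, so $-r''(0) = L_2$ in every direction; and $|r^{(iv)}(h)| \le L_4$ for $|h| < \delta$ by definition of $L_4$. (I will assume unit variance throughout; a standard rescaling handles the general case.)

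The main tool will be the Kac--Rice formula for factorial moments applied to $N = |\{t \in [0,\varepsilon] : g(t) = 0\}|$. By Markov's inequality,
\[
  \mathbb{P}(N \ge 2) \le \frac{1}{2}\mathbb{E}[N(N-1)] = \frac{1}{2}\int_0^\varepsilon \int_0^\varepsilon K_2(s,t)\,ds\,dt,
\]
where $K_2(s,t) = \mathbb{E}[\,|g'(s)\,g'(t)|\,\mid g(s)=g(t)=0\,]\cdot p_{g(s),g(t)}(0,0)$; validity of this formula is guaranteed by the non-degeneracy hypothesis together with the $C^4$ regularity of $\kappa$. By stationarity $K_2(s,t) = K_2(|t-s|)$, so it suffices to integrate $K_2(h)$ against $2(\varepsilon - h)\,dh$ on $[0,\varepsilon]$.

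The core of the argument is a careful Gaussian computation of $K_2(h)$. Writing out the $4\times 4$ joint covariance matrix of $(g(s), g(t), g'(s), g'(t))$ in the block form
\[
  \Sigma = \begin{pmatrix} A & B \\ B^T & C \end{pmatrix}, \qquad A = \begin{pmatrix} 1 & r \\ r & 1 \end{pmatrix},\ B = \begin{pmatrix} 0 & r' \\ -r' & 0 \end{pmatrix},\ C = \begin{pmatrix} L_2 & -r'' \\ -r'' & L_2 \end{pmatrix},
\]
with $r = r(h)$, $r' = r'(h)$, $r'' = r''(h)$, one computes that the conditional covariance matrix of $(g'(s), g'(t))$ given $g(s) = g(t) = 0$ is $C - B^T A^{-1} B$, and in particular the conditional variance of $g'(s)$ equals $L_2 - (r'(h))^2/(1-r(h)^2)$. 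The joint density equals $p_{g(s),g(t)}(0,0) = (2\pi\sqrt{1-r(h)^2})^{-1}$. Taylor-expanding with remainder, using $r'(0) = r'''(0) = 0$ and $|r^{(iv)}| \le L_4$, gives
\[
  r(h) = 1 - \tfrac{L_2}{2}h^2 + O(L_4 h^4), \quad r'(h) = -L_2 h + O(L_4 h^3), \quad 1 - r(h)^2 = L_2 h^2\bigl(1 + O((L_2 + L_4/L_2)h^2)\bigr),
\]
from which a short algebraic manipulation yields
\[
  \mathrm{Var}(g'(s)\mid g(s)=g(t)=0) = O\!\bigl((L_2^2 + L_4)\,h^2\bigr), \qquad p_{g(s),g(t)}(0,0) = O\!\bigl((\sqrt{L_2}\,h)^{-1}\bigr).
\]
Applying $|g'(s)g'(t)| \le \tfrac12(g'(s)^2 + g'(t)^2)$ and combining, one obtains $K_2(h) \le C (L_2^2 + L_4)L_2^{-1/2}\,h$ (on the range of $h$ where the expansion is controlled). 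Integrating gives
\[
  \mathbb{E}[N(N-1)] \le C' \varepsilon^3\cdot \tfrac{L_2^2 + L_4}{\sqrt{L_2}} = C'\varepsilon^3\bigl(L_2^{3/2} + L_4/\sqrt{L_2}\bigr) \le C''\varepsilon^3\sqrt{L_2^3 + L_2^{-1}L_4^2},
\]
the last step by the elementary inequality $a + b \le \sqrt{2(a^2 + b^2)}$, yielding the lemma.

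\textbf{Main obstacle.} The delicate point is the Taylor-type control: the bound $p(0,0) = O((\sqrt{L_2}\,h)^{-1})$ relies on a \emph{quantitative} lower bound $1 - r(h)^2 \ge c L_2 h^2$, which is straightforward when $L_4 h^2 \ll L_2$ but must be handled case-by-case when this ratio is not small. In the latter regime the claimed bound on $\mathbb{P}(N \ge 2)$ is of order one (hence trivial), so one can dispose of it either by noting the result is vacuous there or by restricting the Taylor analysis to the good range and bounding the remaining contribution crudely; verifying that these two regimes cover $h \in (0, \varepsilon)$ with $\varepsilon < \delta$ is the one piece of the argument that requires explicit calculation.
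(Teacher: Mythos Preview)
Your proposal is correct and follows essentially the same route as the paper: arc-length parametrisation to obtain a stationary one-dimensional process, the second factorial moment Kac--Rice formula together with Markov's inequality, and a Taylor expansion of the covariance near the diagonal to show the two-point function is $O(h)$. The only cosmetic difference is that the paper quotes the explicit Bleher--Di formula for the two-point correlation $M_2$ and bounds the resulting expression directly, whereas you reach the same $O(h)$ bound by computing the conditional covariance from the $4\times 4$ block matrix and applying $|g'(s)g'(t)|\le\tfrac12(g'(s)^2+g'(t)^2)$; both arguments yield the same estimate $K_2(h)\le c\,(L_2^{3/2}+L_2^{-1/2}L_4)\,h$.
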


\begin{proof}
It is convenient to use the arc-length parametrisation of $\mathcal{L}$, namely let $\tilde{f} : [-\varepsilon/2, \varepsilon/2] \to \mathbb{R}$ be the restriction $f|_\mathcal{L}$ of $f$ to $\Lc$, and denote by $\tilde{\kappa}: [-\varepsilon/2, \varepsilon/2] \to \mathbb{R}$
its covariance kernel. By the symmetry assumption on $f$, the process $\tilde{f}$ is {\em stationary}, and with no loss of generality we may assume that $\tilde{f}$ is unit variance.

Let $N =  | \{x \in  \mathcal{L} : f(x) = 0 \} |$. Applying the Kac-Rice formula \cite[Theorem 6.3]{AW},
valid by the non-degeneracy assumption on $(f(x),f(y))$ in Lemma \ref{l:tp} we have
\begin{equation}
\label{eq:fact mom 2pnt corr int}
\mathbb{E}[ N(N-1) ]  = \int_{x,y \in [-\varepsilon/2, \varepsilon/2] } M_{2}(x-y) \, dxdy
\end{equation}
with $M_{2}(x)\ge 0$ the two-point correlation function of the zeros of $\tilde{f}$. It is known ~\cite{BlDi} that $M_{2}$ is given by
\begin{equation*}
M_{2}(x) = \frac{1}{\pi^{2}}\frac{-\tilde{\kappa}''(0)\cdot (1-\tilde{\kappa}(x)^{2})-\tilde{\kappa}'(x)^{2}}{(1-\tilde{\kappa}(x)^{2})^{3/2}}
\cdot \left( \sqrt{1-\rho(x)^{2}}+\rho(x)\cdot\arcsin{\rho(x)}  \right),
\end{equation*}
with $\rho$ an explicit expression in terms of $\tilde{\kappa}$ and its first two derivatives, irrelevant for our purpose. The upshot is
that the function
\[ t\mapsto \sqrt{1-t^{2}}+t\cdot\arcsin{t} \]
 is bounded from above, hence
\begin{equation}
\label{e:g}
M_{2}(x) \le c_{1} \cdot \frac{-\tilde{\kappa}''(0)\cdot (1-\tilde{\kappa}(x)^{2})-\tilde{\kappa}'(x)^{2}}{(1-\tilde{\kappa}(x)^{2})^{3/2}}
\end{equation}
for some absolute constant $c_{1}>0$.

Finally, recall that $\kappa$ is $C^{4}$, and so Taylor's theorem implies that each $x \in [0, \varepsilon]$ satisfies,
\begin{equation}
\label{eq:r der expand}
\left| \tilde{\kappa}(x) - 1 -  \frac{1}{2} \tilde{\kappa}'(0) x^2 \right| \le  \max_{ y \in B(\delta) } | \tilde{\kappa}^{(iv)}(y) | x^4 \ \  \text{and} \ \  \left| \tilde{\kappa}'(x) -  \tilde{\kappa}'(0)  x \right| \le \max_{ y \in B(\delta) } | \tilde{\kappa}^{(iv)}(y) |  x^3  .
\end{equation}
Expanding \eqref{e:g} into the Taylor polynomial of fourth degree around the origin with the help of~\eqref{eq:r der expand},
we obtain the bound
\[ M_{2}(x)   \le c_2 \left( \tilde{\kappa}'(0)^{3/2} +    \tilde{\kappa}'(0)^{-1/2}  \max_{ y \in B(\delta)} \tilde{\kappa}^{(iv)}(y) \right) |x|    \]
with some absolute constant $c_{2}>0$.
Finally, integrating the latter inequality over $x,y \in [-\varepsilon/2, \varepsilon/2]$ as in \eqref{eq:fact mom 2pnt corr int} yields that
\[ \mathbb{E}[ N(N-1) ]  < c_3  \varepsilon^3 \left( \tilde{\kappa}'(0)^{3/2} +  \tilde{\kappa}'(0)^{-1/2}   \max_{ y \in B(\delta) } \tilde{\kappa}^{(iv)}(y)  \right)   ,\] with $c_{3}>0$ absolute.
Since $\mathbb{X}$ has constant curvature, the ratio of the derivatives of $\tilde{\kappa}$ and $\kappa$ are bounded from above and from below by absolute constants, and so by Markov's inequality we conclude the proof.
\end{proof}

We now state the main implication of Lemma \ref{l:tp} in our setting. Recall the set-up of the perturbation analysis from section \ref{s:proof}, and in particular the constant $\delta_0$ and the limit kernel~$K_\infty$. The following is an easy corollary of Lemma \ref{l:tp}, the uniform convergence of~$\kappa_n$ on $B(\delta_{0})$ to $K_{\infty}$ along with its first four derivatives, and the fact that~$K_\infty$ satisfies Assumption~\ref{a:nondegen} (and so in particular has strictly-positive second derivatives at the origin); by the above we can take a single number
$\delta>0$ satisfying the assumptions of Lemma \ref{l:tp} applied to $f=f_{n}$ for $n$ sufficiently large (i.e.\ the $\delta$ corresponding to $K_{\infty}$).

\begin{corollary}
\label{c:tp}
There exists a number $0<\delta<\delta_{0}$ sufficiently small, and $c_{1}>0$ sufficiently large depending on $K_{\infty}$ only, such that for $n \in \mathbb{N}$ sufficiently large the following holds. For every geodesic line-segment $\mathcal{L} \subseteq \mathbb{X}$ of length $\ell \in (0, \delta)$,
\[  \mathbb{P}( | \{x \in  \mathcal{L} : f_n(x) = 0 \} | \ge 2 )  < c_{1}  (\ell/s_n)^3    .\]
\end{corollary}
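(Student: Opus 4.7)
The plan is to derive Corollary \ref{c:tp} as a direct application of Lemma \ref{l:tp} to $f = f_n$, with all relevant constants scaled explicitly in $s_n$ through the local uniform convergence $K_n \to K_\infty$ on the microscopic scale. Two observations simplify matters. First, for $\ell \ge s_n$ the bound $c_1(\ell/s_n)^3 \ge c_1$ exceeds $\mathbb{P}(\cdot) \le 1$ as soon as $c_1 \ge 1$, so the substantive regime is $\ell < s_n$. Second, in this regime we may use the symmetries of $\mathbb{X}$ and $f_n$ to translate the segment $\mathcal{L}$ so that it lies inside $B(s_n)$ around the origin, hence (for $U \supseteq B(2)$, which we may assume by shrinking $U$ if necessary) inside the region $\Phi(s_n U)$ on which the convergence of $K_n$ and its derivatives holds. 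The corollary's $\delta$ can then be chosen to be any fixed number less than $\delta_0$.

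In this regime I would invoke Lemma \ref{l:tp} with the lemma's own non-degeneracy parameter set to $2 s_n$. The smoothness hypothesis $\kappa_n \in C^4$ is immediate from Assumption \ref{a:nondegen}. The non-degeneracy of $(f_n(x), f_n(y))$ for $0 < d(x,y) < 2 s_n$ follows because $f_n$ has unit variance, so the pair is non-degenerate iff $|\kappa_n(x,y)| < 1$: this is ensured by the local uniform convergence $K_n \to K_\infty$ together with the fact that $K_\infty$ itself satisfies Assumption \ref{a:nondegen}, so $|K_\infty(w)| < 1$ strictly for $w \ne 0$ in a neighbourhood of the origin.

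Next I would track the scaling of $L_2$ and $L_4$. Because $\Phi$ is locally a linear isometry at the origin, differentiating the identity $K_n(x, y) = \kappa_n(\Phi(s_n x), \Phi(s_n y))$ along any $v \in \mathbb{S}^1$ yields at the origin $\partial_v^2 K_n(0, 0) = s_n^2\, \partial_v^2 \kappa_n(0)$ and analogously a factor $s_n^4$ for fourth derivatives, with the higher differentials of $\Phi$ contributing lower-order corrections that vanish at the origin under the isometric hypothesis. Combined with the local uniform convergence of $K_n$ and its first four derivatives to those of $K_\infty$ on $B(2) \subset U$, this yields constants $c_2, c_4 > 0$ depending only on $K_\infty$ such that, for all $n$ sufficiently large,
\[
L_2 \le c_2\, s_n^{-2}, \qquad L_4 = \sup_{v \in \mathbb{S}^1} \sup_{d(0, y) < 2 s_n} |\kappa_v^{(iv)}(y)| \le c_4\, s_n^{-4}.
\]

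Plugging these bounds into the conclusion of Lemma \ref{l:tp} gives
\[
\mathbb{P}\bigl(|\{x \in \mathcal{L} : f_n(x) = 0\}| \ge 2\bigr) < c\, \ell^3 \sqrt{L_2^3 + L_2^{-1} L_4^2} \le c\, \ell^3 \sqrt{c_2^3\, s_n^{-6} + c_2^{-1} c_4^2\, s_n^{-6}} \le c_1 (\ell/s_n)^3,
\]
with $c_1$ depending only on $K_\infty$, which is the required estimate in the regime $\ell < s_n$ and hence, together with the trivial bound in the regime $\ell \ge s_n$, for all $\ell \in (0, \delta)$. No step is genuinely difficult; the only subtle point is the correct scaling of the derivative bounds, which follows from $\Phi$ being locally a linear isometry at the origin.
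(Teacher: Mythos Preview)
Your approach is essentially the paper's (which just calls it an easy consequence of Lemma~\ref{l:tp}, the local uniform convergence of the rescaled kernels, and the non-degeneracy of $K_\infty$), and your case split $\ell \ge s_n$ versus $\ell < s_n$ is a clean way to confine the fourth-derivative supremum in $L_4$ to the microscopic ball where the convergence of derivatives is actually assumed.

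There is one slip to fix: you only record the upper bound $L_2 \le c_2\, s_n^{-2}$ and then write $L_2^{-1} L_4^2 \le c_2^{-1} c_4^2\, s_n^{-6}$, which uses the inequality in the wrong direction. To bound $L_2^{-1}$ from above you need a \emph{lower} bound $L_2 \ge c_2'\, s_n^{-2}$ for some $c_2' > 0$ depending only on $K_\infty$. This is available for exactly the reason the paper emphasises: the second derivatives of $K_n$ at the origin converge to those of $K_\infty$, and the Hessian of $K_\infty$ at the origin is non-degenerate by Assumption~\ref{a:nondegen}, so $\inf_v |\partial_v^2 K_\infty(0)| > 0$ and hence $s_n^2 L_2 = s_n^2 \sup_v |\partial_v^2 \kappa_n(0)|$ is bounded below uniformly in $n$. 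With both bounds $c_2' s_n^{-2} \le L_2 \le c_2 s_n^{-2}$ in hand, your final estimate goes through unchanged.
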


We can now complete the proof of Proposition \ref{p:meas}. For this we will use the following notion of a `triangular decomposition' of a polygon.

%\filbreak
\vspace{0.1cm}
\begin{definition}
~
\begin{enumerate}

\item For a polygon $P = (D; \gamma, \gamma')$ as in Definition \ref{d:poly}, a \textit{triangular decomposition} $\mathbb{T}$ of~$P$ is a (finite) embedded graph on $\mathbb{X} \cap P$ such that each edge is a geodesic line-segment, each face has three boundary edges, and the union of the faces equals $P$, save for boundaries.

\item A triangular decomposition $\mathbb{T}$ of a polygon $P$ is said to be {\em compatible} with $P$ if both $\gamma$ and $\gamma'$ can be expressed as the union of edges of $\mathbb{T}$.

\item A \textit{triangular decomposition} of an annulus $A$ as in Definition \ref{d:ann} is defined analogously.

\end{enumerate}

\end{definition}

\begin{proof}[Proof of Proposition \ref{p:meas}]
Fix $n \in \mathbb{N}$ sufficiently large, $\delta>0$ sufficiently small and $c_{1}$ sufficiently large, so that the conclusion of Corollary \ref{c:tp} holds, and fix also $c, r > 1$ as in the statement of Proposition \ref{p:meas}. Let $s > 0$, $\varepsilon \in (0, 1)$ and $Q \in \rm{Poly}_{\mathbb{X}; c}(s) \cup \rm{Ann}_{\mathbb{X}; c;r}(s)$ be given. By the definition of the sets $\rm{Poly}_{\mathbb{X}; c}(s)$ and $\rm{Ann}_{\mathbb{X}; c}(s)$, there exists a number $c_2 > 0$, depending only on $c$ and $r$, such that for each $\ell \in (0, s \wedge \delta]$ there exists a triangular decomposition $\mathbb{T}$ of~$Q$ with the following properties: (i) if $Q \in \rm{Poly}_{\mathbb{X}; c}(s)$ then $\mathbb{T}$ is compatible with $Q$; (ii) the edges of $\mathbb{T}$ have length at most $\ell s$; and (iii) $\mathbb{T}$ has at most $c_2 (s/\ell)^2$ vertices.

Fix an edge $e$ in $\mathbb{T}$ and consider the event that $e$ is crossed at least twice by the nodal set.
Applying Corollary \ref{c:tp}, there exists a constant $c_2$, depending only on $K_\infty$, such that this event is of probability at most $c_2  ( \ell / s_n)^3$. By the union bound, the event $E$ that \textit{all} the edges of $\mathbb{T}$ are crossed at most once by the nodal set has probability bounded from below by
\[  1 - c_1 c_2 (s/\ell)^2 (\ell/s_n)^3  = 1 -  c_1 c_2 s^2 s_n^{-3} \ell. \]
Setting $$\ell = \min\{\delta,\, s,\, \varepsilon s_n^3/(c_1 c_2 s^2)\},$$ this is bounded from below by $ 1 - \varepsilon$. Moreover, with this choice of $\ell$, the cardinality of~$\mathcal{P}$ is at most
\begin{equation*}
|\mathcal{P}| \le  c_2 \max\{ \delta^{-2} s^2,\,1,\,  (c_1 c_2)^2 \varepsilon^{-2}  (s/s_n)^{-6}\}
\end{equation*}
Since the sets $\rm{Poly}_{\mathbb{X}; c}(s)$ and $\rm{Ann}_{\mathbb{X}; c}(s)$ are empty unless $s$ is less than a constant ($2 \pi$ in the spherical case, $1$ in the toral case), this in turn is bounded from above by
\[   |\mathcal{P}|  \le   c_3 ( \varepsilon^{-2}  (s/s_n)^{-6} \wedge 1) ,    \]
where $c_3 > 0$ is a constant depending only on $c$, $r$, $\delta$ and $K_\infty$.

Finally, observe that on the event $E$ the crossing event $\Cc_Q(\Sc_n^+)$ is determined by the subset of edges in $\mathbb{T}$ that are crossed exactly once by the nodal set (if $Q \in \rm{Poly}_{\mathbb{X}; c}(s)$ the compatibility of $\mathbb{T}$ with~$Q$ is crucial in this step). Since this subset of edges is, in turn, determined by the signs of $f_n$ on the vertices of the triangular decomposition $\mathbb{T}$, we conclude the proof.
\end{proof}

\subsection{Proof of Lemma \ref{l:comp}}

We begin with the first statement. Define the matrices
\[   \Sigma_{Z} = n \delta \id_n \quad \text{and} \quad \Sigma_{W} = n \delta \id_n + \Sigma_Y - \Sigma_X , \]
where $\id_n$ denotes the $n\times n$ identity matrix. By the Gershgorin circle theorem and the definition of $\delta$, the matrix~$\Sigma_W$ is positive-definite. Hence
\[   Y + Z \stackrel{d}{=} X + W  \]
where $Z$ and $W$ are independent Gaussian random vectors with respective covariance matrices $\Sigma_Z$ and $\Sigma_W$.

Fix $\varepsilon > 0$ and define the events
\[  \mathcal{E}_1 =   \bigcup_{i = 1}^n\left\{   |Y_i| < \varepsilon \right\}  \, , \quad  \mathcal{E}_2 =
\bigcup_{i=1}^n \left\{   |Z_i| > \varepsilon \right\}   \, , \quad \mathcal{E}_3 =   \bigcup_{i = 1}^n\left\{   |X_i| < \varepsilon \right\}   \quad \text{and} \quad  \mathcal{E}_4 =
\bigcup_{i=1}^n \left\{   |W_i| > \varepsilon \right\} . \]
Observe that the variance of the components of $Y$ and $X$ are at least one, whereas the variance of the components of $Z$ and $W$ are at most $(n+1)\delta$. Hence by the union bound, standard results on the maximum of Gaussian vectors, and Markov's inequality, there exists an absolute number $c_1 > 0$ such that
\[ \mathbb{P}( \mathcal{E}_1 ) + \mathbb{P}( \mathcal{E}_3)  <   c_1 n \varepsilon \quad \text{and}  \quad \mathbb{P} (\mathcal{E}_2)  + \mathbb{P} (\mathcal{E}_4)  <  c_1 (\log n \vee 1)^{1/2} \varepsilon^{-1}  ((n+1)\delta)^{1/2}  . \]
This implies that we may couple the vectors $X$ and $Y$ so that, outside of an event of probability
\[   < c_1 ( n \varepsilon    +  (\log n \vee 1)^{1/2} \varepsilon^{-1}  ( (n+1) \delta)^{1/2} ) , \]
the signs of all the components of the vectors are equal, and hence all the events measurable w.r.t\ the signs of the vectors have the same probability up to the said error. To optimise the result we set
\[   \varepsilon = \delta^{1/4} (n+1)^{1/2} n^{-1/2} (\log n \vee 1)^{1/4}  ,\]
which yields the error probability as
\[   c_1  n^{1/2} (n+1)^{1/4}  (\log n \vee 1)^{1/4}  \delta^{1/4}  < c_2 \left(n^{3 + \eta} \delta \right)^{1/4}, \]
for a constant $c_2$ depending only on $\eta>0$.

For the second statement the argument is similar. Since $\Sigma_Y - \Sigma_X$ is positive-definite, one may write $Y \stackrel{d}{=} X + W$ where $W$ is an independent Gaussian random vector with covariance matrix $\Sigma_Y - \Sigma_X$. Fix $\varepsilon > 0$ and let $\mathcal{E}_1$, $\mathcal{E}_3$ and $\mathcal{E}_4$ be defined as before. Since the variance of the components of~$W$ are at most $\delta$, as before there exists an absolute $c_1 > 0$ such that
\[ \mathbb{P}( \mathcal{E}_1 ) + \mathbb{P}( \mathcal{E}_3)  <   c_1 n \varepsilon   \quad \text{and}  \quad \mathbb{P}  (\mathcal{E}_4)  <  c_1 (\log n \vee 1)^{1/2} \varepsilon^{-1}  \delta^{1/2}  . \]
Hence we may couple the vectors $X$ and $Y$ so that, outside of an event of probability
\[   < c_1 ( n \varepsilon    +  (\log n \vee 1)^{1/2} \varepsilon^{-1}   \delta^{1/2} ) , \]
the signs of all the components of the vectors are equal. Setting
\[   \varepsilon = \delta^{1/4} n^{-1/2} (\log n \vee 1)^{1/4}  ,\]
the error is at most $c_2 \left(n^{2 + \eta} \delta \right)^{1/4}$ for a constant $c_2$ depending only on $\eta>0$.

\smallskip

\bibliography{paper}{}
\bibliographystyle{plain}

\end{document}